	\newcommand{\R}{\mathbb {R}}
	\newtheorem{theorem}{Theorem}
	\newtheorem{example}[theorem]{Example}
	\newtheorem{corollary}[theorem]{Corollary}
	\newtheorem{lemma}[theorem]{Lemma}
	\newtheorem{remark}[theorem]{Remark}
	\newtheorem{definition}[theorem]{Definition}
	\newcommand{\traceG}{\ensuremath{\,\mathrm{tr}^{\mathsf{G}}}}
	\newcommand{\traceGR}{\ensuremath{\,\mathrm{tr}^{\mathsf{G,R}}}}
	\newcommand{\traceR}{\ensuremath{\,\mathrm{tr}^{\mathsf{R}}}}
	\newcommand{\estG}{\ensuremath{\,\mathrm{est}^{\mathsf{G}}}}
	\newcommand{\estGR}{\ensuremath{\,\mathrm{est}^{\mathsf{G,R}}}}
	\newcommand{\estR}{\ensuremath{\,\mathrm{est}^{\mathsf{R}}}}
    \newcommand{\trace}{\ensuremath{\,\mathrm{tr}}}
    \newcommand{\diag}{\ensuremath{\,\mathrm{diag}}}
    \newcommand{\ent}{\ensuremath{\,\mathbb{H}}}
    \newcommand{\D}{\ensuremath{\,\mathrm{D}}}
    \newcommand{\arcsinh}{\ensuremath{\,\mathrm{arcsinh}}}
    \newcommand\new[1]{{{\color{black}#1}}}
    \newcommand\old[1]{} 
    \author{Alice Cortinovis\footnote{Institute of Mathematics, EPF Lausanne, 1015 Lausanne, Switzerland. E-mail: alice.cortinovis@epfl.ch. The work of Alice Cortinovis has been supported by the SNSF research project \emph{Fast algorithms from low-rank updates}, grant number: 200020\_178806.} \and Daniel Kressner\footnote{Institute of Mathematics, EPF Lausanne, 1015 Lausanne, Switzerland. E-mail: daniel.kressner@epfl.ch.} }
    \date{}
	\title{On randomized trace estimates for indefinite matrices \\with an application to determinants}
\begin{document}
	
	\maketitle

\begin{abstract}

Randomized trace estimation is a popular and well studied technique that approximates the trace of a large-scale matrix $B$ by computing the average of $x^T Bx$ for many samples of a random vector $X$. Often, $B$ is symmetric positive definite (SPD) but a number of applications give rise to indefinite $B$. Most notably, this is the case for log-determinant estimation, a task that
features prominently in statistical learning, for instance in maximum likelihood estimation for Gaussian process regression. The analysis of randomized trace estimates, including tail bounds, has mostly focused on the SPD case. \old{A notable exception is recent work by Ubaru, Chen, and Saad~\cite{Ubaru2017}
on trace estimates for a matrix function $f(A)$ with Rademacher random vectors.} In this work, we derive new tail bounds for randomized trace estimates applied to indefinite $B$ with Rademacher or Gaussian random vectors. These bounds significantly improve existing results for indefinite $B$, reducing 
the number of required samples by a factor $n$ or even more, where $n$ is the size of $B$. Even for an SPD matrix, our work improves an existing result by Roosta-Khorasani and Ascher~\cite{Ascher2015} 
for Rademacher vectors.

This work also analyzes the combination of randomized trace estimates with the  Lanczos method for approximating the trace of $f(A)$. Particular attention is paid to the matrix logarithm, which is needed for log-determinant estimation. We improve and extend an existing result, to not only cover Rademacher but also  Gaussian random vectors.
\end{abstract}

\noindent
\textbf{Keywords:} trace estimation, determinant, tail bounds, entropy method, Lanczos method

\noindent
\textbf{AMS 2010 Subject Classification:} 65C05, 65F40, 
65F60, 68W20, 60E15


    \section{Introduction}

This paper is concerned with approximating the trace of a symmetric matrix $B \in \R^{n \times n}$ that is accessible only implicitly via matrix-vector products or, more precisely, (approximate) quadratic forms. If $X$ is a random vector of length $n$ such that $\mathbb{E}[X] = 0$ and $\mathbb{E}[XX^T] = I$, then $\mathbb{E}[X^T B X] = \trace(B)$. Based on this result, a stochastic trace estimator~\cite{Hutchinson1989} is obtained from sampling an average of $N$ quadratic forms:
\begin{equation}\label{eq:estN}
     \trace_N(B) := \frac{1}{N} \sum_{i=1}^N (X^{(i)})^T B X^{(i)},
\end{equation}
where $X^{(i)}$, $i = 1,\ldots, N$, are independent copies of $X$.
The most common choices for $X$ are standard Gaussian and Rademacher random vectors. The latter are defined by having i.i.d. entries that take values $\pm 1$ with equal probability. We will consider both choices in this paper and denote the resulting trace estimates by $\traceG_N(B)$ and $\traceR_N(B)$, respectively.

Hutchinson~\cite{Hutchinson1989} used $\traceR_N(B)$ to approximate the trace of the influence matrix of Laplacian smoothing splines. 
In this setting, $B = A^{-1}$ for a symmetric positive definite (SPD) matrix $A$ and, in turn, $A$ is SPD as well.
Other applications, such as spectral density estimation~\cite{Lin2016}, \new{triangle counting in graphs~\cite{Avron2010,Eden2017}}, and determinant computation~\cite{Bai1996}, may feature a symmetric but indefinite matrix $B$.
For approximating the determinant, one exploits the relation 
\begin{equation} \label{eq:logdet}
    \log (\det (A)) = \trace(\log(A)),
\end{equation}
where $\log(A)$ denotes the matrix logarithm of $A$. The need for estimating determinants arises, for instance, in statistical learning~\cite{Affandi2014,Fitzsimons2017,Gardner2018}, lattice quantum chromodynamics~\cite{Thron1998}, and Markov random fields models~\cite{Wainwright2006}. Certain quantities associated with graphs can be formulated as determinants, such as the number of spanning trees, and various negative approximation results exist in this context; see, e.g.,~\cite{Durfee2020,PengWang2018}. Relying on the Cholesky factorization, the exact computation of the determinant is often infeasible for a large matrix $A$. In contrast, the Hutchinson estimator combined with~\eqref{eq:logdet} bypasses the need for factorizing $A$ and instead requires to (approximately) evaluate the quadratic form $x^T \log(A) x$ for several vectors $x \in \R^n$. Compared to the task of estimating the trace of $A^{-1}$, the determinant computation via~\eqref{eq:logdet} is complicated by two issues:  (a) Even when $A$ is SPD, the matrix $B = \log(A)$ may be indefinite; and (b)  the quadratic forms $x^T \log(A) x$ themselves are expensive to compute exactly, so they need to be approximated. 
We mention in passing that there are other methods to approximate \new{traces and} determinants, including \new{randomized} subspace iteration~\cite{Saibaba2017} and block Krylov methods~\cite{LiZhu2020}, but they only work well in specific cases, e.g., when $A = \sigma I + C$ for a matrix $C$ of low numerical rank. \new{The Hutch++ trace estimator, recently proposed and analyzed for the SPD case in~\cite{meyer2021hutch++}, overcomes this limitation via a combination with stochastic trace estimation. Although it is not difficult to imagine that the results presented in this work are useful in extending the analysis from~\cite{meyer2021hutch++} to the indefinite case, a thorough discussion of this extension is beyond the scope of this work. Another direction of work on large-scale determinant estimation has explored the use of
spectral sparsifiers for symmetric diagonally dominant matrices~\cite{Durfee2020,Hunter2014}.}

\paragraph{\new{Trace estimation of indefinite matrices.}}
By the central limit theorem, the estimate~\eqref{eq:estN} can be expected to become more reliable as $N$ increases; see, e.g.,~\cite[Corollaries 3.3 and 4.3]{Chen2016} for such an asymptotic result as $N \to \infty$. Most existing non-asymptotic results for trace estimation are specific to an SPD matrix $B$; see~\cite{AvronToledo2011, Gratton2018, Ascher2015} for examples. They provide a bound on the estimated number $N$ of probe vectors to ensure a small relative error with high probability:
\begin{equation}\label{eq:relSPD}
    \mathbb{P} \left ( \left \lvert \frac{\trace(B) - \traceGR_N(B)}{\trace(B)} \right \rvert \ge \varepsilon \right ) \le \delta;
\end{equation}
see Remark~\ref{rmk:SPDgauss} below for a specific example.
As already mentioned, the assumption that $B$ is SPD is usually not met when computing the determinant of an SPD matrix $A$ via $\trace(\log(A))$ because this would require all eigenvalues of $A$ to be larger than one. For general indefinite $B$, it is unrealistic to aim at a bound of the form~\eqref{eq:relSPD} for the \emph{relative} error, because $\trace(B) = 0$ does not imply zero error. 
Ubaru, Chen, and Saad~\cite{Ubaru2017} derive a bound for the absolute error via rescaling, that is, the results from~\cite{Ascher2015} are applied to the matrix $C := -\log(\lambda A)$ for a value of $\lambda > 0$ that ensures $C$ to be SPD. 
Specifically, for Rademacher vectors it is shown in~\cite[Corollary 4.5]{Ubaru2017} that \begin{equation} \label{eq:proberrorbound}
 \mathbb{P}\left ( | \traceR_N(\log(A)) - \log\det(A) | \ge \varepsilon \right ) \le \delta
\end{equation}
is satisfied with fixed failure probability $\delta$
if the number of samples $N$ \new{grows proportionally to $\varepsilon^{-2} n^2 \log(1 + \kappa(A))^2 \log\frac{2}{\delta}$ where $\kappa(A)$ denotes the condition number of $A$.} 
Unfortunately, this \emph{estimated} number of samples compares unfavorably with a much simpler approach; computing  the trace from the diagonal elements of $\log (A)$ only requires the evaluation of $n$ quadratic forms, using all $n$ unit vectors of length $n$. 
\new{A more general result for indefinite matrices is shown in~\cite{Avron2010} and it also features a worst-case dependence on $n^2$; we refer to 
Remarks~\ref{rmk:AvronG} and~\ref{rmk:AvronR} for a comparison with our new results.}

\paragraph{\new{Approximation of quadratic forms.}}
To approximate the quadratic forms $\new{x^T B x} = x^T \log(A) x$, a polynomial approximation of the logarithm can be used, see~\cite{Han2017,Pace2004} for approximation by Chebyshev expansion/interpolation and~\cite{Barry1999,Boutsidis2017,Zhang2007} for approximation by Taylor series expansion. Often, a better approximation can be obtained by the Lanczos method, which is equivalent to applying Gaussian quadrature to the integral $\int \log(\new{\lambda}) d\mu(\new{\lambda})$ on the spectral interval of $A$, for a suitably defined measure $\mu$; see~\cite{GolubMeurant2010}. In this case, upper and lower bounds for the quantity $x^T \log(A) x$ can be determined without much additional effort~\cite{Bai1996}. Moreover, the convergence of Gaussian quadrature for the quadratic form can be related to the best polynomial approximation of the logarithm on the spectral interval of $A$; see~\cite[Theorem 4.2]{Ubaru2017}.
By combining the polynomial approximation error with~\eqref{eq:proberrorbound}, one obtains a total error bound that takes into account both sources of errors. Such a result is presented in~\cite[Corollary 4.5]{Ubaru2017} for Rademacher vectors; the fact that all such vectors have bounded norm is essential in the analysis. 

\paragraph{\new{Contributions.}}
In this paper, we improve the results from~\new{\cite{Avron2010,Ubaru2017}} by first showing that the number of samples required to achieve~\eqref{eq:proberrorbound} is much lower. In particular, we show for a general symmetric matrix $B$ that 
\begin{equation} \label{eq:mainresult}
    \mathbb{P}\big( | \traceGR_N(B) - \trace(B) | \ge \varepsilon \big) \le \delta
\end{equation}
is satisfied with fixed failure probability $\delta$ if the number of samples $N$ grows proportionally with the stable rank $\rho(B) := \| B \|_F^2 / \| B \|_2^2$; as $\rho(B) \in [1, n]$, the growth is at most linear in $n$ (instead of quadratic). 
We derive such a result for both, Gaussian and Rademacher vectors,
and demonstrate that the dependence on $n$ is asymptotically tight with an explicit example. For SPD matrices \new{$B$}, our 
bound also improves the state-of-the-art result~\cite[Theorem 1]{Ascher2015} for Rademacher vectors by establishing that the number of probe vectors is inversely proportional to the stable rank \new{of $B^{1/2}$}.

Specialized to determinant computation, we combine our results with an improved analysis of the Lanczos method, to get a sharper total error bound for Rademacher vectors. Finally, we extend this combined error bound to Gaussian vectors, which requires some additional consideration because of the unboundedness of such vectors. We remark that some of our results are potentially of wider interest, beyond stochastic trace and determinant estimation, such as a tail bound for Rademacher chaos (Theorem~\ref{thm:concRademacher}) and an error bound (Corollary~\ref{cor:convgeneral} combined with Corollary~\ref{cor:logellipse})    on the polynomial approximation of the logarithm.

\new{We note in passing that some results of this paper also apply to a non-symmetric matrix $B$, because of the relations $\trace(B) = \trace(B_s)$ and $x^T B x = x^T B_{s} x$ with the symmetric part $B_s = (B+B^T)/2$.}

\section{Tail bounds for trace estimates}~\label{sec:absolute}

In this section we derive tail bounds of the form~\eqref{eq:mainresult} for the stochastic trace estimator applied to a symmetric, possibly indefinite matrix $B \in \R^{n\times n}$. 
We will analyze Gaussian and Rademacher vectors separately.
In the following, we will frequently use a spectral decomposition $B = Q\Lambda Q^T$, where $\Lambda = \diag(\lambda_1, \ldots, \lambda_n)$ contains the eigenvalues of $B$ and $Q$ is an orthogonal matrix.
    
\subsection{Standard Gaussian random vectors}    
    
The case of Gaussian vectors will be addressed by using a tail bound for sub-Gamma random variables, which follows from Chernoff bounds; see, e. g., ~\cite{Boucheron2013}.

\begin{definition}
        A random variable $X$ is called \emph{sub-Gamma} with variance parameter $\nu > 0$ and scale parameter $c > 0$ if
        \begin{equation*}
         \mathbb{E}[\exp(\lambda X)] \le \exp \left ( \frac{\nu \lambda^2}{2(1-c\lambda)} \right ) \qquad\text{for all } 0 < \lambda < \frac{1}{c}.
        \end{equation*}
\end{definition}

    \begin{lemma}[{\cite[Section 2.4]{Boucheron2013}}]\label{lemma:chernoff2}
	Let $X$ be a sub-Gamma random variable with parameters $(\nu, c)$. Then, for all $\varepsilon \ge 0$, we have
	\begin{equation*}
	    \mathbb{P}(X \ge\sqrt{2\varepsilon\nu} + c\varepsilon) \le \exp(-\varepsilon).
	\end{equation*}
	\end{lemma}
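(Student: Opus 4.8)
The plan is to use the standard Cram\'er--Chernoff (exponential Markov) method. For any $\lambda \in (0,1/c)$, Markov's inequality applied to the nonnegative random variable $\exp(\lambda X)$ gives
$$\mathbb{P}(X \ge z) = \mathbb{P}\big(\exp(\lambda X) \ge \exp(\lambda z)\big) \le \exp(-\lambda z)\,\mathbb{E}[\exp(\lambda X)] \le \exp\left(-\lambda z + \frac{\nu\lambda^2}{2(1-c\lambda)}\right),$$
where the last step invokes the sub-Gamma bound from the definition. Setting $z := \sqrt{2\varepsilon\nu} + c\varepsilon$, it therefore suffices to exhibit a single $\lambda \in (0,1/c)$ for which the exponent is at most $-\varepsilon$; equivalently, to lower bound the Legendre--Fenchel transform $\psi^*(z) := \sup_{0<\lambda<1/c}\big(\lambda z - \psi(\lambda)\big)$, where $\psi(\lambda) := \nu\lambda^2/(2(1-c\lambda))$, by $\varepsilon$.

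To locate the optimal $\lambda$, I would differentiate $\lambda z - \psi(\lambda)$ and solve $\psi'(\lambda) = z$. Writing $s := 1 - c\lambda \in (0,1)$, a short computation yields $\psi'(\lambda) = \frac{\nu}{2c}(s^{-2}-1)$, so the stationarity condition becomes $s^{-2} = 1 + \frac{2c^2\varepsilon}{\nu} + 2c\sqrt{2\varepsilon/\nu}$. The crucial observation is that the right-hand side is a perfect square: with $a := c\sqrt{2\varepsilon/\nu}$ it equals $(1+a)^2$, whence $s = 1/(1+a)$ and the minimizer is $\lambda^* = \frac{\sqrt{2\varepsilon/\nu}}{1 + c\sqrt{2\varepsilon/\nu}}$, which indeed lies in $(0,1/c)$ for every $\varepsilon > 0$.

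Finally, I would substitute $\lambda^*$ (equivalently $s = 1/(1+a)$) back into $\lambda z - \psi(\lambda)$. Using $\psi(\lambda) = \frac{\nu}{2}\lambda^2/s$ together with the definition of $z$, the same perfect-square structure makes the expression collapse exactly to $\varepsilon$, so that $\psi^*(z) = \varepsilon$ and $\mathbb{P}(X \ge z) \le \exp(-\varepsilon)$; the boundary case $\varepsilon = 0$ is trivial since the claimed bound then reads $\mathbb{P}(X \ge 0) \le 1$. I expect the only genuine obstacle to be bookkeeping in this back-substitution: the algebra is elementary but error-prone, and one must keep track of the constraint $\lambda^* < 1/c$ to ensure the sub-Gamma bound remains applicable throughout. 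A tidy alternative that sidesteps the explicit optimization is to plug the guessed $\lambda^*$ directly into the Chernoff exponent and verify $\lambda^* z - \psi(\lambda^*) \ge \varepsilon$ by hand, which avoids appealing to the Legendre transform but amounts to the same computation.
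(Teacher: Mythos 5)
Your proposal is correct: the stationary point $\lambda^* = \frac{\sqrt{2\varepsilon/\nu}}{1+c\sqrt{2\varepsilon/\nu}}$ lies in $(0,1/c)$, and back-substitution does collapse the Chernoff exponent $\lambda^* z - \psi(\lambda^*)$ exactly to $\varepsilon$ (writing $a := c\sqrt{2\varepsilon/\nu}$, one gets $\lambda^* z = \varepsilon\frac{2+a}{1+a}$ and $\psi(\lambda^*) = \frac{\varepsilon}{1+a}$, whose difference is $\varepsilon$). The paper itself gives no proof of this lemma --- it is imported verbatim with a citation to Section 2.4 of \cite{Boucheron2013} --- and your Cram\'er--Chernoff computation of the Legendre transform of $\psi(\lambda)=\nu\lambda^2/(2(1-c\lambda))$ is precisely the argument given in that reference, so this is essentially the same approach as the cited source.
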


\begin{lemma}[{\cite[Proposition 2.10]{Wainwright2019}}]\label{lemma:chernoff}
Let $X$ be a random variable such that $\mathbb{E}[X] = 0$, and such that both $X$ and $-X$ are sub-Gamma with parameters $(\nu, c)$. Then, for all $\varepsilon \ge 0$, we have
\begin{equation*}
    \mathbb{P} \left ( |X| \ge \varepsilon \right ) \le 2\exp\left ( -\frac{\varepsilon^2}{2(\nu + c\varepsilon)} \right ).
\end{equation*}
\end{lemma}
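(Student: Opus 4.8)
The plan is to split the two-sided event $\{|X| \ge \varepsilon\}$ into the two one-sided events $\{X \ge \varepsilon\}$ and $\{X \le -\varepsilon\}$ and to combine them with a union bound. Since the hypotheses are symmetric in $X$ and $-X$—both are assumed sub-Gamma with the same parameters $(\nu, c)$—it suffices to prove a single one-sided tail bound
\[
  \mathbb{P}(X \ge t) \le \exp\left(-\frac{t^2}{2(\nu + c t)}\right), \qquad t \ge 0,
\]
and to apply it once to $X$ and once to $-X$; adding the two resulting probabilities produces exactly the factor $2$ in the claimed estimate for $\mathbb{P}(|X| \ge \varepsilon)$.

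To establish the one-sided bound I would invoke Lemma~\ref{lemma:chernoff2}, which states that $\mathbb{P}(X \ge \sqrt{2 s \nu} + c s) \le \exp(-s)$ for every $s \ge 0$. The difficulty is that this bound is parametrized by the auxiliary quantity $s$ rather than by the threshold $t$ directly, so the key step is to choose $s$ so that $\sqrt{2 s \nu} + c s \le t$ while keeping $\exp(-s)$ as small as the target requires. The natural choice is $s := t^2 / (2(\nu + c t))$. With this $s$, monotonicity of the tail probability in the threshold together with Lemma~\ref{lemma:chernoff2} gives $\mathbb{P}(X \ge t) \le \mathbb{P}(X \ge \sqrt{2 s \nu} + c s) \le \exp(-s)$, which is precisely the desired one-sided bound.

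The one computation to carry out is therefore the verification that $\sqrt{2 s \nu} + c s \le t$ for this $s$. Introducing $u := \nu/(\nu + c t) \in (0, 1]$, a short calculation gives $\sqrt{2 s \nu} = t\sqrt{u}$ and $c s = t(1 - u)/2$, so the inequality collapses to $\sqrt{u} + (1 - u)/2 \le 1$, equivalently $\sqrt{u} \le (1 + u)/2$. This is the arithmetic--geometric mean inequality applied to $1$ and $u$, i.e.\ the statement $(\sqrt{u} - 1)^2 \ge 0$, so it holds for all $u \in (0, 1]$. This elementary reparametrization is the only real obstacle; the remainder of the argument is the bookkeeping of the union bound over the two tails. (The symmetry assumption on $\pm X$ is exactly what lets a single one-sided computation settle both tails.)
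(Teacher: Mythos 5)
Your proof is correct, but there is nothing in the paper to compare it against: the paper does not prove Lemma~\ref{lemma:chernoff}, it imports it verbatim from~\cite[Proposition 2.10]{Wainwright2019}, just as Lemma~\ref{lemma:chernoff2} is imported from~\cite{Boucheron2013}. What you have effectively done is make the paper more self-contained by deriving the cited result from the other cited result. Your argument is the standard inversion between the Chernoff-type form and the Bernstein-type form of the sub-Gamma tail: the union bound handles the two tails via the symmetry hypothesis on $\pm X$, and the one-sided bound $\mathbb{P}(X \ge t) \le \exp\bigl(-t^2/(2(\nu + ct))\bigr)$ follows from Lemma~\ref{lemma:chernoff2} with $s = t^2/(2(\nu+ct))$, since the required inequality $\sqrt{2s\nu} + cs \le t$ reduces, with $u = \nu/(\nu+ct) \in (0,1]$, to $2\sqrt{u} \le 1+u$, i.e.\ $(1-\sqrt{u})^2 \ge 0$. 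I checked the algebra ($\sqrt{2s\nu} = t\sqrt{u}$ and $cs = t(1-u)/2$) and it is exact; notably, no slack in the constants is lost in this direction of the inversion, so you recover precisely the stated bound. Two small observations: the monotonicity step $\{X \ge t\} \subseteq \{X \ge \sqrt{2s\nu}+cs\}$ is the right way to phrase the reduction, and the hypothesis $\mathbb{E}[X]=0$ is never actually needed in your argument (it is in fact implied by the two-sided sub-Gamma assumption via Jensen's inequality), which is consistent with Lemma~\ref{lemma:chernoff2} not requiring it.
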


Lemma~\ref{lemma:chernoff} implies the following result for the tail of a single-sample trace estimate. This result is similar, but not identical, to~\cite[Example 2.12]{Boucheron2013} and~\cite[Lemma 1]{Laurent2000}, which apply to symmetric matrices with zero diagonal and SPD matrices, respectively.    
    \begin{lemma}\label{lemma:gaussianest}
    For a Gaussian vector $X$ of length $n$ we have
    \begin{equation*}
        \mathbb{P} \left ( |X^T B X - \trace(B)| \ge \varepsilon \right ) \le 2\exp\left ( -\frac{\varepsilon^2}{4\|B\|_F^2 + 4\varepsilon\|B\|_2}  \right )
    \end{equation*}
    for all $\varepsilon>0$.
    \end{lemma}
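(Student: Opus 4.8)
The plan is to apply the sub-Gamma tail bound of Lemma~\ref{lemma:chernoff} to the centered random variable $Z := X^T B X - \trace(B)$. The first step is to diagonalize: writing $B = Q\Lambda Q^T$ and using that $Y := Q^T X$ is again a standard Gaussian vector by rotational invariance, we get $X^T B X = \sum_{i=1}^n \lambda_i Y_i^2$ with i.i.d.\ standard Gaussians $Y_i$, so that
\begin{equation*}
   Z = \sum_{i=1}^n \lambda_i (Y_i^2 - 1), \qquad \mathbb{E}[Z] = 0 .
\end{equation*}
It therefore suffices to show that both $Z$ and $-Z$ are sub-Gamma with parameters $\nu = 2\|B\|_F^2 = 2\sum_i \lambda_i^2$ and $c = 2\|B\|_2$; substituting these into Lemma~\ref{lemma:chernoff} reproduces exactly the claimed bound, since $2(\nu + c\varepsilon) = 4\|B\|_F^2 + 4\varepsilon\|B\|_2$.

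Next I would compute the moment generating function. Using independence together with $\mathbb{E}[\exp(t(Y_i^2-1))] = e^{-t}(1-2t)^{-1/2}$ for $t < \tfrac12$, for every $0 < s < 1/(2\|B\|_2)$ one obtains
\begin{equation*}
  \log \mathbb{E}[\exp(sZ)] = \sum_{i=1}^n g(s\lambda_i), \qquad g(u) := -u - \tfrac{1}{2}\log(1-2u),
\end{equation*}
where the constraint $s < 1/(2\|B\|_2)$ guarantees $1 - 2s\lambda_i > 0$ for all $i$ and hence finiteness. The sub-Gamma definition then requires bounding this sum by $\nu s^2/(2(1-cs)) = s^2\sum_i\lambda_i^2/(1 - 2s\|B\|_2)$, and it would be enough to establish the term-wise estimate $g(s\lambda_i) \le s^2\lambda_i^2/(1-2s\|B\|_2)$.

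The crux, and the step I expect to be the main obstacle, is the behavior of $g$ for negative arguments, which is unavoidable here precisely because $B$ is indefinite. The clean Taylor bound $g(u) \le u^2/(1-2u)$, obtained from $g(u) = \sum_{k\ge 2}(2u)^k/(2k)$ and $1/(2k)\le 1/4$, is valid only for $u>0$ and in fact fails for $u<0$. I would instead prove the symmetric estimate $g(u) \le u^2/(1-2|u|)$ for all $|u| < \tfrac12$. For $u \ge 0$ this is the bound just mentioned; for $u<0$, writing $v = -u$, it reduces to $v - \tfrac12\log(1+2v) \le v^2/(1-2v)$, which I would verify by comparing with the $u>0$ case through $g(-v) \le g(v)$, the latter following from $\tfrac12\log\frac{1+2v}{1-2v} \ge 2v$. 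Applying this with $u = s\lambda_i$ and using $|u| = s|\lambda_i| \le s\|B\|_2$ yields the desired term-wise bound, so $Z$ is sub-Gamma with the stated parameters; since negating $B$ leaves both $\|B\|_F$ and $\|B\|_2$ unchanged, the identical argument applies to $-Z$, and the result then follows from Lemma~\ref{lemma:chernoff}.
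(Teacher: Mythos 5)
Your proposal is correct and follows essentially the same route as the paper's proof: diagonalize via rotational invariance, bound the per-eigenvalue cumulant generating function $-u - \tfrac{1}{2}\log(1-2u)$, conclude that $\pm(X^TBX - \trace(B))$ is sub-Gamma with parameters $(2\|B\|_F^2, 2\|B\|_2)$, and apply Lemma~\ref{lemma:chernoff}. The only (immaterial) difference is in handling negative arguments: the paper uses $\psi(u) \le u^2 \le u^2/(1+cu)$ for $u<0$, while you use the symmetry bound $g(-v) \le g(v)$; both yield the same term-wise estimate.
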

    \begin{proof}    
    We let
    \begin{equation*}
    Y := X^T B X - \trace(B) = X^T Q \Lambda Q^T X - \trace(B) = \sum_{i=1}^n \lambda_i(Z_i^2-1),
    \end{equation*}
    where $Z_i \sim \mathcal{N}(0,1)$ is the $i$th component of the Gaussian vector $Q^T X$. To show that $Y$ is sub-Gamma, we define for $\lambda \in \R$ the function
    \begin{equation*}
	    \psi(\lambda) := \log\mathbb{E} [ \exp(\lambda(Z^2-1))], \quad Z \sim \mathcal{N}(0,1).
    \end{equation*}
    By direct computation, it follows that $\psi(\lambda) = -\lambda - \frac{1}{2} \log(1-2\lambda)$ for $\lambda<\frac{1}{2}$. In particular, this implies $\psi(\lambda) \le \frac{\lambda^2}{1-2\lambda}$ for 
    $0 \le \lambda < \frac{1}{2}$, and $\psi(\lambda) \le \lambda^2 \le \frac{\lambda^2}{1 + c\lambda}$ \new{for $-\frac{1}{c} < \lambda < 0$ for all $c > 0$}. 
	Using the independence of $Z_i$ for different $i$ we obtain
	\begin{align*}
        \log \mathbb{E}[\exp(\lambda Y)] & = \sum_{i=1}^n \log \mathbb{E} [\exp(\lambda \lambda_i(Z_i^2-1))]
         = \sum_{i=1}^n \psi(\lambda \lambda_i) \\
         & \le \sum_{i=1}^n \frac{\lambda_i^2 \lambda^2}{1-2|\lambda_i| \lambda} \le \frac{\| B \|_F^2 \lambda^2}{1-2\| B \|_2 \lambda}
	\end{align*}
	for $0 < \lambda < \frac{1}{2\| B \|_2}$. This shows that $Y$ is sub-Gamma with parameters $$(\nu,c) = (2\| B \|_F^2, 2\| B \|_2).$$ Moreover, $-Y = X^T (-B) X - \trace(-B)$ is also sub-Gamma with the same parameters. Because $\mathbb{E}[Y]=0$,  Lemma~\ref{lemma:chernoff} implies the desired result.
\end{proof}

A diagonal embedding trick turns Lemma~\ref{lemma:gaussianest} into a tail bound for the stochastic trace estimator~\eqref{eq:estN}.
\begin{theorem}\label{thm:gauss}
Let $B\in \R^{n\times n}$ be symmetric. Then
\begin{equation*}
    \mathbb{P}\left ( | \traceG_N(B) - \trace(B) | \ge \varepsilon \right ) \le 2\exp \left ( -\frac{N\varepsilon^2}{4 \| B \|_F^2 + 4 \varepsilon \| B \|_2}\right )
\end{equation*}
for all $\varepsilon > 0$. In particular, for $N \ge \frac{4}{\varepsilon^2}(\| B \|_F^2 + \varepsilon \| B \|_2) \log\frac{2}{\delta}$ it holds that
    $\mathbb{P}(| \traceG_N(B) - \trace(B) | \ge \varepsilon) \le \delta$.
\end{theorem}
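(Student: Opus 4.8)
The plan is to reduce the $N$-sample statement to the single-sample bound of Lemma~\ref{lemma:gaussianest} by a stacking (diagonal embedding) argument, so that no new concentration machinery is needed. Concretely, I would collect the $N$ independent Gaussian probe vectors $X^{(1)},\dots,X^{(N)}$ into a single long vector $\tilde X := ((X^{(1)})^T,\dots,(X^{(N)})^T)^T \in \R^{Nn}$. Because the $X^{(i)}$ are independent standard Gaussian vectors of length $n$, $\tilde X$ is itself a standard Gaussian vector of length $Nn$. I would then introduce the block-diagonal matrix $\tilde B := \frac{1}{N}\diag(B,\dots,B) \in \R^{Nn\times Nn}$ with $N$ diagonal blocks equal to $B$.

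The two identities that drive the proof are $\tilde X^T \tilde B\, \tilde X = \frac{1}{N}\sum_{i=1}^N (X^{(i)})^T B X^{(i)} = \traceG_N(B)$ and $\trace(\tilde B) = \frac{1}{N}\cdot N\,\trace(B) = \trace(B)$. Thus the event in the theorem is exactly the single-sample event $|\tilde X^T \tilde B\, \tilde X - \trace(\tilde B)| \ge \varepsilon$ for the matrix $\tilde B$ and the Gaussian vector $\tilde X$, and I can apply Lemma~\ref{lemma:gaussianest} directly in dimension $Nn$.

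It then remains to express the norms of $\tilde B$ in terms of those of $B$. Since $\tilde B$ is block diagonal with $N$ copies of $B/N$, its singular values are those of $B/N$, each repeated $N$ times, giving $\| \tilde B \|_2 = \frac{1}{N}\| B \|_2$ and $\| \tilde B \|_F^2 = N\cdot \frac{1}{N^2}\| B \|_F^2 = \frac{1}{N}\| B \|_F^2$. Substituting into the bound of Lemma~\ref{lemma:gaussianest} and clearing the common factor $1/N$ from numerator and denominator of the exponent yields the claimed value $-N\varepsilon^2/(4\| B \|_F^2 + 4\varepsilon\| B \|_2)$.

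For the ``in particular'' statement, I would simply require the exponent to be at most $-\log\frac{2}{\delta}$, that is, solve $2\exp(-N\varepsilon^2/(4\| B \|_F^2+4\varepsilon\| B \|_2)) \le \delta$ for $N$; this rearranges to $N \ge \frac{4}{\varepsilon^2}(\| B \|_F^2 + \varepsilon\| B \|_2)\log\frac{2}{\delta}$. I do not anticipate a genuine obstacle: the only points requiring care are confirming that stacking independent standard Gaussians produces a standard Gaussian (so that Lemma~\ref{lemma:gaussianest} applies verbatim in the larger dimension) and correctly tracking the two factors of $1/N$ appearing in the Frobenius and spectral norms of the scaled block-diagonal matrix.
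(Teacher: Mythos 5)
Your proof is correct and is essentially identical to the paper's own argument: both use the block-diagonal embedding $\mathcal{B} = \diag(N^{-1}B,\ldots,N^{-1}B)$, the identification of $\traceG_N(B)$ with a single quadratic form in dimension $Nn$, the norm identities $\|\mathcal{B}\|_F^2 = N^{-1}\|B\|_F^2$ and $\|\mathcal{B}\|_2 = N^{-1}\|B\|_2$, and an application of Lemma~\ref{lemma:gaussianest}. The derivation of the sample-size condition by solving the tail bound for $N$ also matches the paper exactly.
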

\begin{proof}
    We apply Lemma~\ref{lemma:gaussianest} to the matrix 
    \begin{equation}\label{eq:mathcalB}
    \mathcal{B} := \diag\big ( N^{-1} B,\ldots, N^{-1} B\big) \in \R^{Nn\times Nn},
    \end{equation}
    that is, the block diagonal matrix with the $N$ diagonal blocks containing rescaled copies of $B$. In turn, the trace estimate~\eqref{eq:estN} equals $X^T \mathcal{B} X$ for a Gaussian vector $X$ of length $Nn$. Noting that $\| \mathcal{B} \|_F = N^{-1/2} \| B \|_F$ and $\| \mathcal{B} \|_2 = N^{-1} \| B \|_2$, the first part of the corollary follows from Lemma~\ref{lemma:gaussianest}. Setting 
    \begin{equation*}
        \delta := 2\exp \left ( -\frac{\varepsilon^2}{4 \| \mathcal{B} \|_F^2 + 4 \varepsilon \| \mathcal{B} \|_2}\right ) = 2\exp \left ( -\frac{N\varepsilon^2}{4 \| B \|_F^2 + 4 \varepsilon \| B \|_2}\right )
    \end{equation*}
    we obtain
    $
        N = \frac{4}{\varepsilon^2} \left ( \| B \|_F^2 + \varepsilon \| B \|_2 \right ) \log \frac{2}{\delta}.
    $
\end{proof}

\new{
\begin{remark}
\label{rmk:AvronG}The result of Theorem~\ref{thm:gauss} compares favorably with Lemma 4 in~\cite[]{Avron2010}, which shows that $\mathbb{P}(| \traceG_N(B) - \trace(B) | \ge \varepsilon) \le \delta$ for $N \ge \frac{20}{\varepsilon^2} \| B \|_*^2 \log\frac{4}{\delta}$. Because of $\| B \|_F \le \| B \|_* \le \sqrt{n} \| B \|_F$, the bound of Theorem~\ref{thm:gauss} is always better for reasonable values of $\varepsilon$, and it can improve the estimated number of samples $N$ in~\cite{Avron2010} by a factor proportional to $n$.
\end{remark}
}

We recall that the \emph{stable rank} of $B$ is defined as $\rho = \| B \|_F^2 / \| B \|_2^2$  and satisfies $\rho \in [1,n]$. In particular, $\rho(B) = 1$ when $B$ has rank one and $\rho(B) = n$ when all singular values are equal. Intuitively,  $\rho(B)$ tends to be large when $B$ has many singular values not significantly smaller than the largest one. The minimum number of probe vectors required by Theorem~\ref{thm:gauss} depends on the stable rank of $B$ in the following way: 
\[ \frac{4}{\varepsilon^2}(\rho \| B \|_2^2 + \varepsilon \| B \|_2) \log\frac{2}{\delta}  \le \frac{4}{\varepsilon^2}(n\| B \|_2^2 + \varepsilon \| B \|_2) \log\frac{2}{\delta}.
\]
The upper bound indicates that $N$ may need to be chosen proportionally with $n$ to reach a fixed (absolute) accuracy $\varepsilon$ with constant success probability, provided that $\| B \|_2$ remains constant as well. 
The following lemma shows for a simple matrix $B$ that such a linear growth of $N$ can actually not be avoided. 
\begin{lemma}\label{lemma:tightG}
Let $n$ be even and consider the traceless matrix $B = \begin{bmatrix} I_{\frac{n}{2}} & 0 \\ 0 & -I_{\frac{n}{2}} \end{bmatrix}$. Then, for every $\varepsilon > 0$, it holds that
\begin{equation*}
    \mathbb{P}(| \traceG_N(B)| \le \varepsilon) \le \varepsilon \sqrt{\frac{N}{\pi n}}. 
\end{equation*}
\end{lemma}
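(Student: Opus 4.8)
The plan is to reduce the event $\{|\traceG_N(B)| \le \varepsilon\}$ to an anti-concentration statement for a difference of two independent chi-squared variables, and then to control the resulting density at the origin. Since $B = \diag(I_{n/2}, -I_{n/2})$, each quadratic form $(X^{(i)})^T B X^{(i)}$ equals the sum of the squares of the first $n/2$ entries of $X^{(i)}$ minus the sum of the squares of the last $n/2$ entries. Summing over the $N$ independent samples, I would write $N\cdot\traceG_N(B) = U - V$, where $U$ and $V$ are independent and each is $\chi^2$-distributed with $m := Nn/2$ degrees of freedom (being sums of $Nn/2$ squared standard Gaussians). Thus, setting $W := U-V$, I have $\mathbb{P}(|\traceG_N(B)| \le \varepsilon) = \mathbb{P}(|W| \le N\varepsilon)$, and I note for orientation that $\mathrm{Var}(W) = 4m = 2Nn$.

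The core anti-concentration step is the bound $\mathbb{P}(|W| \le N\varepsilon) = \int_{-N\varepsilon}^{N\varepsilon} f_W(w)\,dw \le 2 N\varepsilon \sup_w f_W(w)$, where $f_W$ is the (symmetric) density of $W$. The first thing to establish is that the supremum is attained at the origin, $\sup_w f_W(w) = f_W(0)$. The cleanest route uses the characteristic function: because $U$ and $V$ are i.i.d.\ with $\phi_{\chi^2_m}(s) = (1-2is)^{-m/2}$, one gets $\phi_W(s) = |\phi_{\chi^2_m}(s)|^2 = (1+4s^2)^{-m/2} \ge 0$, so Fourier inversion gives $f_W(w) = \frac{1}{2\pi}\int_{\mathbb{R}} \cos(sw)\,\phi_W(s)\,ds \le \frac{1}{2\pi}\int_{\mathbb{R}} \phi_W(s)\,ds = f_W(0)$. (Equivalently, Cauchy--Schwarz applied to the autocorrelation representation $f_W(w) = \int_0^\infty g(u)\,g(u+|w|)\,du$ of the chi-squared density $g$ yields $f_W(w)\le \int_0^\infty g^2 = f_W(0)$.)

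It then remains to bound $f_W(0)$. Evaluating $f_W(0) = \int_0^\infty g(u)^2\,du = \frac{\Gamma(m-1)}{2^m\,\Gamma(m/2)^2}$, or equivalently from the characteristic-function integral $f_W(0) = \frac{1}{4\sqrt{\pi}}\,\frac{\Gamma((m-1)/2)}{\Gamma(m/2)}$, the target estimate is $f_W(0) \le \frac{1}{2\sqrt{2\pi m}}$, which is precisely the value of the matched-variance Gaussian density $1/\sqrt{2\pi\,\mathrm{Var}(W)}$ at the origin. Substituting $m = Nn/2$ (so that $2\pi m = \pi N n$) and combining with the previous step gives exactly $\mathbb{P}(|W|\le N\varepsilon) \le 2N\varepsilon\cdot\frac{1}{2\sqrt{\pi N n}} = \varepsilon\sqrt{N/(\pi n)}$, as claimed. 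I expect this last density estimate to be the main obstacle: it reduces to the sharp Gamma-ratio inequality $\Gamma((m-1)/2)/\Gamma(m/2) \le \sqrt{2/m}$, an essentially tight, non-asymptotic bound. Standard tools such as log-convexity of $\Gamma$ or Gautschi's inequality only yield the weaker $1/\sqrt{m/2-1}$, so pinning down the clean constant $\sqrt{2/m}$ (i.e.\ comparing the leptokurtic peak of $W$ against the matched Gaussian density) is the delicate technical point that the argument hinges on.
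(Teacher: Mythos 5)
Your skeleton coincides with the paper's own proof: the same reduction $N\cdot\traceG_N(B)=U-V$ with $U,V$ independent $\chi^2_{Nn/2}$ variables, the same anti-concentration step $\mathbb{P}(|U-V|\le N\varepsilon)\le 2N\varepsilon\, f_W(0)$ justified by the density peaking at the origin (the paper invokes the autocorrelation structure, which is one of your two equivalent arguments), and the same closed form $f_W(0)=\frac{1}{4\sqrt{\pi}}\,\Gamma\big(\tfrac{m-1}{2}\big)\big/\Gamma\big(\tfrac{m}{2}\big)$ with $m=Nn/2$. Everything up to that point is correct.

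The gap is the step you defer to the end, and it is not merely delicate: the inequality $\Gamma\big(\tfrac{m-1}{2}\big)\big/\Gamma\big(\tfrac{m}{2}\big)\le\sqrt{2/m}$ is false for every $m$. Indeed, log-convexity of the Gamma function gives
\begin{equation*}
\Gamma\big(\tfrac{m}{2}\big)^2 \;\le\; \Gamma\big(\tfrac{m-1}{2}\big)\,\Gamma\big(\tfrac{m+1}{2}\big) \;=\; \tfrac{m-1}{2}\,\Gamma\big(\tfrac{m-1}{2}\big)^2,
\end{equation*}
hence $\Gamma\big(\tfrac{m-1}{2}\big)\big/\Gamma\big(\tfrac{m}{2}\big)\ge\sqrt{2/(m-1)}>\sqrt{2/m}$, i.e.\ $f_W(0)$ is \emph{strictly larger} than the matched-variance Gaussian peak $\frac{1}{2\sqrt{2\pi m}}$. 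This is exactly what your own parenthetical about the ``leptokurtic peak'' predicts: a leptokurtic density is \emph{more} peaked at the center than the matched Gaussian, so the comparison you need points in the wrong direction, and no refinement of Gamma-function inequalities can rescue it (note that log-convexity, which you cite as giving only a ``weaker'' bound, is precisely what refutes your target). The paper instead settles for the attainable estimate: via the duplication formula and $4^{-j}\binom{2j}{j}\le(\pi j)^{-1/2}$ with $j=\tfrac{nN}{4}-1$ --- exactly the $1/\sqrt{m/2-1}$ bound you dismissed --- it obtains $f_W(0)\le \tfrac{1}{2}\big(\pi(nN-4)\big)^{-1/2}$, and then replaces $nN-4$ by $nN$; strictly speaking that last substitution goes the wrong way, so what this route (yours or the paper's) actually proves is $\mathbb{P}(|\traceG_N(B)|\le\varepsilon)\le \varepsilon N/\sqrt{\pi(nN-4)}$, the lemma with $nN-4$ in place of $nN$. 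The clean constant $\sqrt{N/(\pi n)}$ is unreachable through a bound on $f_W(0)$ alone, since $2Nf_W(0)>\sqrt{N/(\pi n)}$ always; the correct repair of your argument is therefore to accept the $\sqrt{2/(m-1)}$-type bound and the correspondingly adjusted constant, which carries the same asymptotic message about growth with $\sqrt{n}$.
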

\begin{proof}
By the definition of $B$, the trace estimate takes the form
\begin{equation*}
    \traceG_N(B) = \frac{1}{N} \Big( \sum_{i=1}^{nN/2} X_i^2 - \sum_{j=1}^{nN/2} Y_j^2 \Big)
\end{equation*}
for independent $X_i, Y_j \sim N(0,1)$. In other words,
\begin{equation*}
    N \cdot \traceG_N(B) = X - Y, 
\end{equation*}
where $X, Y$ 
 are independent 
Chi-squared random variables with $\frac{nN}{2}$ degrees of freedom.
The probability density function $f$ of $Z = X-Y$ can be expressed as
\begin{equation*}
    f(z) = \frac{1}{2^{nN/4} \sqrt{\pi} \Gamma(nN/4)} |z|^{\frac{nN}{4}-\frac{1}{2}} K_{\frac{nN}{4}-\frac{1}{2}}(|z|),
\end{equation*}
where $K_{\frac{nN}{4}-\frac{1}{2}}$ is a modified Bessel function of the second kind~\cite{diffChiSquared}. 
In particular, 
\begin{equation*}
f(0) = \frac{1}{4\sqrt{\pi}} \frac{\Gamma\left ( \frac{nN}{4} - \frac{1}{2} \right )}{\Gamma \left ( \frac{nN}{4} \right )} = \frac{1}{4\sqrt{\pi}} \frac{\sqrt{\pi} }{2^{\frac{nN}{2}-2}}\binom{\frac{nN}{2}-2}{\frac{nN}{4}-1}\le \frac{1}{2\sqrt{\pi nN}},
\end{equation*}
where we used the duplication formula for Gamma functions and the inequality $\frac{1}{2^{2k}}\binom{2k}{k} \le \frac{1}{\sqrt{\pi k}}$; see~\cite{centralBinCoeff}.

As $f$ is an autocorrelation function (of the density function of a Chi-squared variable with $nN/2$ degrees of freedom), its maximum is at $0$. We can therefore estimate the probability of $X-Y$ being in the interval $[-N\varepsilon, N\varepsilon]$ in the following way:
\begin{equation*}
    \mathbb{P}(|\traceG_N(B)| \le \varepsilon) = \mathbb{P}(|X-Y| \le N\varepsilon) \le 2N\varepsilon f(0) \le \varepsilon\sqrt{\frac{N}{\pi n}}.\qedhere
\end{equation*}
\end{proof}

We can reformulate Theorem~\ref{thm:gauss} in such a way that, given a number $N$ of probe vectors and a failure probability $\delta \in (0,1)$, we have $\varepsilon = \varepsilon(B, N, \delta)$ such that with probability at least $1-\delta$ one has $\traceG_N(B) \in [\trace(B) - \varepsilon, \trace(B) + \varepsilon]$. The random variable $X^T \mathcal{B} X - \trace(\new{\mathcal{B}})$, where $\mathcal{B}$ is defined as in~\eqref{eq:mathcalB} and $X$ is a Gaussian vector of length $nN$, is sub-Gamma with parameters $\left ( 2\frac{\|B\|_F^2}{N}, 2\frac{\|B\|_2}{N} \right )$, and the same holds for $-X^T \mathcal{B} X$. By Lemma~\ref{lemma:chernoff2} we have 
\begin{equation}\label{eq:epsBoundG}
    \varepsilon \equiv \varepsilon(B, \delta, N) = \frac{2}{\sqrt{N}} \| B \|_F \sqrt{\log\frac{2}{\delta}} + \frac{2}{N} \| B \|_2 \log\frac{2}{\delta} \le \Big( 2 \sqrt{\frac{n}{N}\log\frac{2}{\delta}} + \frac{2}{N}  \log\frac{2}{\delta}\Big) \| B \|_2.
\end{equation}
As the example in Lemma~\ref{lemma:tightG} shows, the potential growth of $\varepsilon$ with $\sqrt{n}$ cannot be avoided in general. Figure~\ref{fig:tightG} illustrates this growth. In the case of relative error estimates for symmetric positive semidefinite (SPSD) matrices, it is shown in~\cite{Wimmer2014} that the dependence on $\log\frac{2}{\delta}$ and $\frac{1}{\varepsilon^2}$ cannot be improved.

    \begin{figure}
    \centering
    \includegraphics{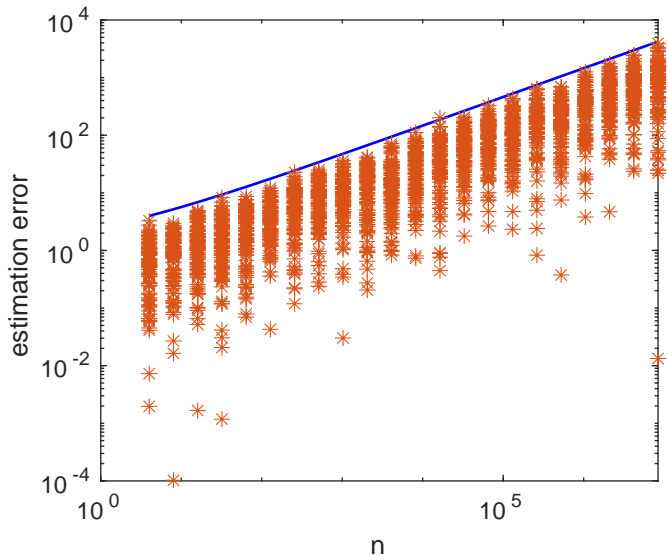}
    \caption{Asterisks: Errors $|\traceG_{10}(B) - \trace(B)|$ of $100$ samples for each $n = 2^k$ with $k = 2, \ldots, 23$, where $B$ is the matrix from 
     Lemma~\ref{lemma:tightG}. Blue line: Error bound $\varepsilon(B,0.01,10)$ from~\eqref{eq:epsBoundG}.}
    \label{fig:tightG}
    \end{figure}

    \old{Also the dependence of $N$ on $\log\frac{2}{\delta}$ and $\frac{1}{\varepsilon^2}$ in Theorem~\ref{thm:gauss} cannot be improved, as shown in (Wimmer et al., 2014)}
    
        \begin{remark}\label{rmk:SPDgauss}
     For a nonzero symmetric positive semidefinite (SPSD) matrix $B$, \new{the result of Theorem~\ref{thm:gauss}  can be turned into a relative error estimate. Let $\mu := \| B \|_2 / \trace(B) = \rho(B^{1/2})^{-1}$.
     Replacing $\varepsilon$ by $\varepsilon \cdot \trace(B)$ in Theorem~\ref{thm:gauss} and noting that
     $\| B \|_F^2 / \trace(B)^2 \le \mu$, one obtains
        \begin{equation*}
            \mathbb{P} \left ( \frac{|\traceG_N(B)-\trace(B)|}{\trace(B)} \ge \varepsilon \right ) \le \delta \quad\text{ for } N \ge \frac{4}{\varepsilon^2}(1 + \varepsilon)\mu \log\frac{2}{\delta}.
        \end{equation*}
        State-of-the-art results of a similar form are Theorem 3 in~\cite{Ascher2015}, which requires $N \ge \frac{8}{\varepsilon^2} \mu \log\frac{2}{\delta}$, and Corollary 3.3 in~\cite{Gratton2018}, which requires $N \ge \frac{2}{\varepsilon^2} \new{\mu} \log\frac{2}{\delta}$ \emph{and}  $\varepsilon \in \left (0, \frac{1}{2}\right )$. 
        Compared to~\cite{Gratton2018}, our result imposes no restriction on $\varepsilon$  at the expense of a somewhat larger constant.
        On the other hand, as $\varepsilon \le 1$, our result is always more favorable than the result from~\cite{Ascher2015} for SPSD matrices.}
    \end{remark}

    \subsection{Rademacher random vectors}
    
    The quadratic form $X^T B X$ for a Rademacher vector $X$ is called \emph{Rademacher chaos of order $2$}. We will first consider the homogeneous case, corresponding to a matrix $B$ with zero diagonal, which has been studied extensively in the literature, see, e.g.,~\cite{Boucheron2013,FoucartRauhut2013,HansonWright1971,Krahmer2011,Talagrand1996}. The non-homogeneous case is easily obtained from the homogeneous case; see Corollary~\ref{cor:rademacher} below. We make use of the the entropy method~\cite{Boucheron2013} to establish the following tail bound for a single-sample trace estimate.

    \begin{theorem}\label{thm:concRademacher}
        Let $X$ be a Rademacher vector of length $n$ and let $B$ be a nonzero symmetric matrix such that $B_{ii} = 0$ for $i=1, \ldots, n$. Then, for all $\varepsilon > 0$,
        \begin{equation}\label{eq:radConfInt}
            \mathbb{P} \left ( | X^T B X | \ge \varepsilon \right ) \le 2\exp\left ( -\frac{\varepsilon^2}{8\|B\|_F^2 + 8\varepsilon \|B\|_2 } \right ).
        \end{equation}
    \end{theorem}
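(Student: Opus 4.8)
The plan is to show that $Z := X^T B X$ and $-Z$ are both sub-Gamma with parameters $(\nu,c) = (4\|B\|_F^2, 4\|B\|_2)$ and then invoke Lemma~\ref{lemma:chernoff}. Since $B_{ii}=0$ gives $\mathbb{E}[X_iX_j]=0$ for $i\neq j$ and hence $\mathbb{E}[Z]=0$, this immediately yields~\eqref{eq:radConfInt} with the stated constants, because $2(\nu+c\varepsilon) = 8\|B\|_F^2 + 8\varepsilon\|B\|_2$. As replacing $B$ by $-B$ leaves $\|B\|_F$, $\|B\|_2$, and the zero-diagonal assumption unchanged, it suffices to bound the cumulant generating function $G(\lambda) = \log\mathbb{E}[\exp(\lambda Z)]$ from above for $0 < \lambda < 1/(4\|B\|_2)$.

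First I would set up the entropy method. Writing $Z = \sum_{i\neq j} B_{ij} X_i X_j$ and letting $Z^{(i)}$ denote the value obtained by flipping the sign of $X_i$, the assumption $B_{ii}=0$ gives the clean increment $Z - Z^{(i)} = 4 X_i (BX)_i$, so that $\sum_i (Z-Z^{(i)})^2 = 16\, X^T B^2 X$. By sub-additivity (tensorization) of entropy across the $n$ independent coordinates, together with the modified logarithmic Sobolev inequality for the symmetric Bernoulli measure applied coordinate-wise, one obtains a differential inequality for $G$ whose right-hand side is controlled by the Efron--Stein-type proxy $V := \sum_i \mathbb{E}_i'[(Z-Z_i')^2] = 8\, X^T B^2 X$, where $Z_i'$ resamples the $i$th coordinate. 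The deterministic part $\mathbb{E}[V] = 8\,\trace(B^2) = 8\,\|B\|_F^2$ is what produces the variance parameter $\nu = 4\|B\|_F^2$ in the small-$\lambda$ (sub-Gaussian) regime.

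The main obstacle is that the variance proxy $V = 8\, X^T B^2 X$ is itself an order-$2$ chaos rather than a constant, so the resulting bound on $G$ is self-referential. I would resolve this by splitting $X^T B^2 X = \|B\|_F^2 + (X^T B^2 X - \trace(B^2))$: the mean supplies the Gaussian-like term, while the centred remainder is again a quadratic form, now driven by $B^2$ whose spectral norm is $\|B\|_2^2$. Absorbing this fluctuation---either by iterating the exponential Efron--Stein inequality or by directly bounding, for $0 < \lambda < 1/(4\|B\|_2)$, the moment generating function of the nonnegative chaos $X^T B^2 X$ via $\|B^2\|_2 = \|B\|_2^2$---is exactly the step that feeds the spectral norm into the scale parameter and caps $\lambda$ below $1/(4\|B\|_2)$. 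Carefully tracking the constants through this step is where the real work lies, and it should land on $(\nu,c) = (4\|B\|_F^2, 4\|B\|_2)$.

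With $Z$ and $-Z$ shown to be sub-Gamma with these parameters and $\mathbb{E}[Z]=0$, Lemma~\ref{lemma:chernoff} gives $\mathbb{P}(|Z|\ge\varepsilon) \le 2\exp(-\varepsilon^2/(2(\nu+c\varepsilon)))$, which is precisely~\eqref{eq:radConfInt}. An alternative I would keep in reserve, in case the self-referential step proves awkward, is a decoupling argument: bound $\mathbb{E}[\exp(\lambda Z)]$ by the decoupled chaos $\mathbb{E}[\exp(4\lambda X^T B X')]$ with an independent copy $X'$, evaluate the inner Rademacher sum through $\prod_j \cosh(s(BX)_j) \le \exp(s^2\|BX\|_2^2/2)$, and thereby reduce to the same task of controlling the moment generating function of $X^T B^2 X$---confirming that this quadratic form in $B^2$ is the genuine crux regardless of the route taken.
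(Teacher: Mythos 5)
You follow the same skeleton as the paper: zero diagonal gives $\mathbb{E}[Z]=0$; the sign-flip increments are $Z - Z^{(i)} = 4X_i(BX)_i$ (where $Z^{(i)}$ denotes the chaos with the sign of $X_i$ flipped), so $\sum_{i=1}^n (Z-Z^{(i)})^2 = 16\|BX\|_2^2$; the Rademacher logarithmic Sobolev inequality (Theorem~\ref{thm:logsobolev}) turns this into an entropy bound; and the target is to show $\pm Z$ sub-Gamma with parameters $(4\|B\|_F^2, 4\|B\|_2)$ and invoke Lemma~\ref{lemma:chernoff}. You also correctly isolate the crux: the variance proxy $\|BX\|_2^2 = X^T B^2 X$ is itself a chaos, so the bound is self-referential. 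However, your proposal stops exactly there ("where the real work lies"), and neither of the two devices you offer closes the loop, so there is a genuine gap at the step that constitutes the heart of the proof.

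Concretely, the split $X^T B^2 X = \|B\|_F^2 + \left(X^T B^2 X - \trace(B^2)\right)$ is circular: the remainder equals $X^T(B^2 - \D_{B^2})X$, again a zero-diagonal Rademacher chaos, i.e.\ exactly the class of random variables whose MGF you are trying to bound; moreover $\|B^2 - \D_{B^2}\|_2$ can be as large as $2\|B\|_2^2$, so (after normalizing $\|B\|_2=1$) there is no contraction and iterating the split does not visibly converge to the stated constants. The decoupling fallback does reduce to the MGF of $X^T B^2 X$, but the factor $4$ in $\mathbb{E}[\exp(4\lambda X^T B X')]$ enters squared through the $\cosh$ bound and inflates the variance parameter by roughly $16$; this is why decoupling-based results such as~\cite[Proposition 8.13]{FoucartRauhut2013}, quoted in Remark~\ref{rmk:compare}, carry constants like $32$ and $128$ and cannot recover the constants $8,8$ of \eqref{eq:radConfInt}. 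What is missing are the two ingredients the paper uses to break the self-reference without any iteration: (a) the entropy duality inequality~\cite[Lemma 8.50]{FoucartRauhut2013}, $\lambda\mathbb{E}[Y\exp(\lambda Z)] \le \ent(\exp(\lambda Z)) + \mathbb{E}[\exp(\lambda Z)]\log\mathbb{E}[\exp(\lambda Y)]$ with $Y := \|BX\|_2^2$, which converts the mixed term into the MGF of $Y$ alone; and (b) the observation that $h(x)=\|Bx\|_2^2$ is \emph{self-bounding}: with $\|B\|_2=1$ its positive-part increments satisfy $\sum_{i=1}^n \left(h(X)-h(\bar X^{(i)})\right)_+^2 \le 16\|B^T BX\|_2^2 \le 16\,h(X)$, so the one-sided inequality \eqref{eq:grossplus} gives $\ent(\exp(\lambda Y)) \le 4\lambda^2\mathbb{E}[Y\exp(\lambda Y)]$, and Herbst's argument (Lemma~\ref{lemma:herbst}) applied with $g(\lambda)=4\psi'(\lambda)$, $\psi(\lambda)=\log\mathbb{E}[\exp(\lambda Y)]$, closes the recursion in one stroke: $\psi(\lambda)\le \lambda\|B\|_F^2 + 4\lambda\psi(\lambda)$, hence $\psi(\lambda)\le \lambda\|B\|_F^2/(1-4\lambda)$ for $\lambda<1/4$. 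Feeding this back through (a) and applying Herbst once more yields the sub-Gamma parameters $(4\|B\|_F^2,4)$ you were aiming for; without (a) and (b), or an equivalent mechanism, your outline does not yet prove the theorem.
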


\begin{proof}
The proof follows closely~\cite[Theorem 6]{Adamczak2003} and~\cite[Theorem 17]{Boucheron2013}; see Remark~\ref{rmk:compare} for a comparison with these results. The main idea of the proof is as follows. Using the logarithmic Sobolev inequalities discussed in the appendix, a bound on the entropy of the random variable $X^T B X$ is obtained. Using a (modified) Herbst argument, we derive a bound on the moment generating function (MGF) of $X^T B X$, establishing that it is sub-Gamma with certain constants, which then allows us to apply Lemma~\ref{lemma:chernoff}.

Without loss of generality, we may assume $\| B \|_2 = 1$; the general case follows from applying the result to $\tilde B := B / \|B\|_2$. 
Let us consider the function $f:\{-1,1\}^n \to \R$ defined as
\begin{equation*}
    f(x) = x^T B x = \sum_{i \neq j} x_i x_j B_{ij}.
\end{equation*}
We want to apply the logarithmic Sobolev inequality~\eqref{eq:grossnormal} from Theorem~\ref{thm:logsobolev} to $f(X)$. For this purpose, we let \[
\bar{X}^{(i)} = \big[ X_1, \ldots, X_{i-1}, -X_i, X_{i+1}, \ldots, X_n \big]^T = X - 2 X_i e_i, \quad i=1, \ldots, n,
\]
where $e_i$ denotes the $i$th unit vector. Using that
$B$ has zero diagonal entries, we obtain
\[
f(X) - f(\bar{X}^{(i)})  = \langle B X, X\rangle - \langle BX - 2 X_i B e_i, X - 2 X_i e_i \rangle = 4 X_i \langle B e_i, X  \rangle. 
\]
Therefore, denoting
\[
Y := \|BX\|_2^2 = \frac{1}{16} \sum_{i=1}^n \Big( \sum_{j=1}^n B_{ij}X_j\Big)^2 ,
\]
\new{We denote by $\mathbb{H}(Z)$ the entropy\footnote{\new{The entropy of $Z$ is defined as $\ent(Z) := \mathbb{E}[Z \log Z] - \mathbb{E}[Z] \log \mathbb{E}[Z]$ provided that all expected values exist.}} of a random variable $Z$}. Theorem~\ref{thm:logsobolev} establishes, for all $\lambda > 0$,
\begin{equation}\label{eq:219}
    \ent(\exp(\lambda f(X))) \le 2\lambda^2 \mathbb{E} \left [ Y \exp(\lambda f(X)) \right ].
\end{equation}

The decoupling inequality in~\cite[Lemma 8.50]{FoucartRauhut2013}, which follows from Jensen's inequality, gives
\begin{equation*}
    \lambda \mathbb{E}[Y\exp(\lambda f(X))] \le \ent(\exp(\lambda f(X))) + \mathbb{E}[\exp(\lambda f(X))]\log \mathbb{E}[\exp(\lambda Y)].
\end{equation*}
Combined with~\eqref{eq:219}, this implies
\begin{equation}\label{eq:220}
\ent(\exp(\lambda f(X))) \le \frac{2\lambda}{1-2\lambda} \mathbb{E}[\exp(\lambda f(X))] \cdot \log\mathbb{E}[\exp(\lambda Y )] \text{ for } 0 < \lambda < \frac{1}{2}.
\end{equation}

To find an upper bound on the MGF of $Y$, we use again a logarithmic Sobolev inequality, then transform the obtained bound on the entropy into a bound on the MGF by Herbst argument. We do so by applying the inequality~\eqref{eq:grossplus} from Theorem~\ref{thm:logsobolev} to the function $h: \R^n \to \R$ defined by $h(x) := \| B x \|_2^2$. For this purpose, note that
\begin{eqnarray*}
h(X) - h(\bar{X}^{(i)}) &=& \langle B X, BX \rangle -  
\langle B \bar{X}^{(i)}, B \bar{X}^{(i)} \rangle 
= \langle B (X- \bar{X}^{(i)}), B (X+ \bar{X}^{(i)}) \rangle \\
&=& 4 \langle X_i B e_i, BX - X_i B e_i \rangle \le  
4 X_i \langle  B e_i, BX \rangle
\end{eqnarray*}
and, hence,
\[
\sum_{i=1}^n \left (h(X) - h(\bar{X}^{(i)})\right )_+^2 \le 
16 \sum_{i=1}^n \langle  B e_i, BX \rangle^2
= 16 \|B^T BX\|_2^2 \le
16 \|BX\|_2^2.
\]
Therefore Theorem~\ref{thm:logsobolev} gives
\begin{equation*}
    \ent(\exp(\lambda Y)) \le 4 \lambda^2 \mathbb{E} [Y \exp( \lambda Y)].
\end{equation*}
Letting $g(\lambda) := 4\mathbb{E}[Y \exp(\lambda Y)] /\mathbb{E}[ \exp(\lambda Y)]$, we have obtained a bound of the form~\eqref{eq:ineqH}, as required by Lemma~\ref{lemma:herbst}. Note that $g(\lambda) = 4 \psi'(\lambda)$, where $\psi(\lambda) := \log\mathbb{E}[\exp(\lambda Y)]$. The result of Lemma~\ref{lemma:herbst} gives
\begin{equation*}
    \log \mathbb{E}[\exp(\lambda Y)] \le \frac{\lambda}{1-4\lambda} \| B \|_F^2 \,\,\text{ for }\lambda\in\left (0, \frac{1}{4}\right ).
\end{equation*}
Inserting this inequality into~\eqref{eq:220} gives
\begin{equation*}
    \ent(\exp(\lambda f(X))) \le \frac{2\lambda^2 \| B \|_F^2}{(1-4\lambda)(1-2\lambda)} \mathbb{E}[\exp(\lambda f(X))] \,\,\text{ for }\lambda\in\left (0, \frac{1}{4}\right ).
\end{equation*}
The random variable $f(X)$ satisfies~\eqref{eq:ineqH} for the function $g(\lambda) := \frac{2\|B\|_F^2}{(1-4\lambda)(1-2\lambda)}$ in the interval $[0, 1/4)$. Recalling that $\mathbb{E}[f(X)]=0$, the result of Lemma~\ref{lemma:herbst} gives
\begin{equation*}
    \log \mathbb{E}[\exp(\lambda f(X))] \le \lambda \| B \|_F^2 \log \frac{1-2\lambda}{1-4\lambda} \le \frac{2\| B \|_F^2 \lambda^2}{1-4\lambda},
    \quad \lambda \in [0, 1/4),
\end{equation*}
where we used $\log(1+x) \le x$ in the last inequality. 

Replacing $f$ by $-f$ and $B$ by $-B$, we also obtain
\begin{equation*}
    \log \mathbb{E}[\exp(- \lambda f(X))] \le \frac{2\| B \|_F^2 \lambda^2}{1-4\lambda}, \quad \lambda \in [0, 1/4).
\end{equation*}
Therefore both variables $f(X)$ and $-f(X)$ are sub-Gamma with parameters $( 4\|B\|_F^2, 4)$. Applying Lemma~\ref{lemma:chernoff} concludes the proof.
\end{proof}
\begin{remark}\label{rmk:compare}
The proof of Theorem~\ref{thm:concRademacher} follows the proof of~\cite[Theorem 6]{Adamczak2003}, which in turn refines a result from~\cite[Theorem 17]{Boucheron2003} (see also~\cite{Boucheron2013}) by substituting the more general logarithmic Sobolev inequality from~\cite[Proposition 10]{Boucheron2003} with the ones from  Theorem~\ref{thm:logsobolev} specific for Rademacher random variables. However, let us stress that the results in~\cite{Adamczak2003,Boucheron2003} feature larger constants partly because they deal with the more general Rademacher chaos 
\begin{equation}\label{eq:supchaos}
    f(X) = \sup_{B \in \mathcal{B}} \sum_{i\neq j} X_i X_j B_{ij},
\end{equation}
where $\mathcal{B}$ is a set of symmetric matrices 
with zero diagonal. Restricted to the case $\mathcal{B} = \{B\}$, the results stated in~\cite[Theorem 6]{Adamczak2003} and~\cite[Exercise 6.9]{Boucheron2013} give $\mathbb{P} \left ( | X^T B X | \ge \varepsilon \right ) \le 2\exp\left ( -\frac{\varepsilon^2}{16\|B\|_F^2 + 16 \|B\|_2 \varepsilon} \right )$ and $\mathbb{P} \left ( | X^T B X | \ge \varepsilon \right ) \le 2\exp\left ( -\frac{\varepsilon^2}{32\|B\|_F^2 + 128 \|B\|_2 \varepsilon} \right )$, respectively.\\ Proposition 8.13 in~\cite{FoucartRauhut2013} \old{, which also aims at the more general~\eqref{eq:supchaos},} states $\mathbb{P} \left ( | X^T B X | \ge \varepsilon \right ) \le 2\exp\left ( -\min\left \{ \frac{3\varepsilon^2}{128 \| \new{B} \|_F^2}, \frac{\varepsilon}{32 \| \new{B} \|_2}  \right \} \right ) $.
\end{remark}
    
    As for Gaussian vectors, the result of Theorem~\ref{thm:concRademacher} can be turned into a tail bound for $\traceR_N(B)$ by block diagonal embedding. In the following, let $\D_B$ denote the diagonal matrix containing the diagonal entries of $B$.
        \begin{corollary}\label{cor:rademacher}
        Let $B$ be a nonzero symmetric matrix. Then 
        \begin{equation*}
             \mathbb{P}\big( | \traceR_N(B) - \trace(B) | \ge \varepsilon \big) \le 2\exp \left ( -\frac{N\varepsilon^2}{8 \| B - \D_B \|_F^2 + 8 \varepsilon \| B - \D_B \|_2}\right )
        \end{equation*}
        for every $\varepsilon > 0$. In particular, for 
     \begin{equation*}
     N \ge \frac{8}{\varepsilon^2} \left ( \| B - \D_B \|_F^2 + \varepsilon \| B - \D_B \|_2 \right ) \log \frac{2}{\delta} 
     \end{equation*}
     it holds that
    $\mathbb{P}\big(| \traceR_N(B) - \trace(B) | \ge \varepsilon \big) \le \delta$.
    \end{corollary}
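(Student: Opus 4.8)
The plan is to reduce the non-homogeneous trace estimate to the homogeneous Rademacher chaos already controlled by Theorem~\ref{thm:concRademacher}, and then to lift the resulting single-sample bound to $N$ samples by the same block-diagonal embedding used in the proof of Theorem~\ref{thm:gauss}.

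First I would exploit the defining property of a Rademacher vector, namely that every entry satisfies $X_i^2 = 1$. This forces the diagonal of $B$ to contribute deterministically: since
\[
X^T B X = \sum_{i} B_{ii} X_i^2 + \sum_{i \neq j} B_{ij} X_i X_j = \trace(B) + X^T (B - \D_B) X,
\]
the centered single-sample estimate is exactly the homogeneous chaos $X^T \tilde B X$ with $\tilde B := B - \D_B$, a symmetric matrix whose diagonal vanishes. Hence $X^T B X - \trace(B) = X^T \tilde B X$, and Theorem~\ref{thm:concRademacher} applies verbatim to this difference, with $\| \tilde B \|_F = \| B - \D_B \|_F$ and $\| \tilde B \|_2 = \| B - \D_B \|_2$.

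Next I would handle the averaging over $N$ samples by stacking. Concatenating the independent copies $X^{(1)}, \ldots, X^{(N)}$ into a single Rademacher vector of length $Nn$ and setting
\[
\mathcal{B} := \diag\big( N^{-1}\tilde B, \ldots, N^{-1}\tilde B \big) \in \R^{Nn \times Nn},
\]
one checks that $X^T \mathcal{B} X = \traceR_N(B) - \trace(B)$. The crucial point is that $\mathcal{B}$ again has zero diagonal, because each block $\tilde B$ does, so Theorem~\ref{thm:concRademacher} is indeed applicable to $\mathcal{B}$. Using $\| \mathcal{B} \|_F^2 = N^{-1} \| \tilde B \|_F^2$ and $\| \mathcal{B} \|_2 = N^{-1} \| \tilde B \|_2$ and substituting into~\eqref{eq:radConfInt} yields the stated tail bound, the factor $N$ appearing in the numerator of the exponent after clearing the common $N^{-1}$.

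Finally, the sample-count statement follows by equating the right-hand side to $\delta$ and solving $\frac{N\varepsilon^2}{8 \| \tilde B \|_F^2 + 8\varepsilon \| \tilde B \|_2} = \log\frac{2}{\delta}$ for $N$, exactly as in the last step of the proof of Theorem~\ref{thm:gauss}. I do not expect any genuine obstacle here: the only things to notice are the deterministic role of the diagonal under Rademacher sampling and the fact that the embedding preserves the zero-diagonal structure required by Theorem~\ref{thm:concRademacher}; everything else is the bookkeeping of norms already carried out for the Gaussian case.
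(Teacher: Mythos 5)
Your proposal is correct and follows essentially the same route as the paper: the paper's proof also sets $C := B - \D_B$, stacks the samples into the block-diagonal matrix $\diag\big(N^{-1}C,\ldots,N^{-1}C\big)$ (which inherits the zero diagonal), applies Theorem~\ref{thm:concRademacher} with $\| \mathcal{C} \|_F = N^{-1/2}\|C\|_F$ and $\| \mathcal{C} \|_2 = N^{-1}\|C\|_2$, and solves for $N$. Your explicit remark that $X_i^2 = 1$ makes the diagonal contribute deterministically is exactly the (implicit) reason the paper can write $\traceR_N(B) - \trace(B) = X^T \mathcal{C} X$, so there is no difference in substance.
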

    \begin{proof}
    Let $C := B - \D_B$ and $\mathcal{C} := \diag\big( N^{-1} C,\ldots, N^{-1} C\big) \in \R^{Nn\times Nn}$. Then, $\traceR_N(B) - \trace(B) = X^T \mathcal{C} X$ for a Rademacher vector $X$ of length $Nn$.
    The matrix $\mathcal{C}$ has zero diagonal, $\| \mathcal{C} \|_F = N^{-1/2} \new{\| C \|_F}$, and $\| \mathcal{C} \|_2 = N^{-1} \|C\|_2$. Now, the first part of the corollary directly follows from Theorem~\ref{thm:concRademacher}. Imposing a failure probability of $\delta$ in~\eqref{eq:radConfInt} gives
    \begin{equation*}
        \delta := 2\exp \left ( -\frac{\varepsilon^2}{8 \| \mathcal{C} \|_F^2 + 8 \varepsilon \| \mathcal{C} \|_2}\right ) = 2\exp \left ( -\frac{N\varepsilon^2}{8 \| C \|_F^2 + 8 \varepsilon \| C \|_2}\right ),
    \end{equation*}
    and hence
    $
        N = \frac{8}{\varepsilon^2} \left ( \| C \|_F^2 + \varepsilon \| C \|_2 \right ) \log \frac{2}{\delta}    $. \qedhere
    \end{proof}

\begin{remark}
It is instructive to compare the result of Corollary~\ref{cor:rademacher} to the straightforward application of Bernstein's inequality, which gives
\begin{equation*}
    \mathbb{P}\left ( | \traceR_N(B) - \trace(B) | \ge \varepsilon \right ) \le 2\exp \left ( -\frac{N\varepsilon^2}{4 \| B - \D_B \|_F^2 + \frac{4}{3}  n \varepsilon \| B - \D_B \|_2}\right ).
\end{equation*}
Clearly, a disadvantage of this bound is the explicit dependence of the denominator on $n$, which does not appear in Corollary~\ref{cor:rademacher}.
\end{remark}

An alternative expression for the lower bound on $N$ is obtained by noting that $\| B - \D_B\|_F \le \|B\|_F$ and $\| B - \D_B\|_2 \le 2 \| B \|_2$ (the factor $2$ in the latter inequality is asymptotically tight, see, e.g.,~\cite{Bhatia1989}). The result of  Corollary~\ref{cor:rademacher} thus states that $N$ needs to be at least in the following way: 
\[ \frac{8}{\varepsilon^2}(\rho \| B \|_2^2 + 2 \varepsilon \| B \|_2) \log\frac{2}{\delta}  \le \frac{8}{\varepsilon^2}(n \| B \|_2^2 + 2 \varepsilon \| B \|_2) \log\frac{2}{\delta},
\]
where $\rho$ is the stable rank of $B$.

 \new{\begin{remark}\label{rmk:AvronR} In analogy to the Gaussian case (see Remark~\ref{rmk:AvronG}), the result of Corollary~\ref{cor:rademacher} compares favorably with~Lemma 5 in~\cite{Avron2010}, which shows that $\mathbb{P}(| \traceR_N(B) - \trace(B) | \ge \varepsilon) \le \delta$ for $N \ge \frac{6}{\varepsilon^2} \| B \|_*^2 \log\frac{2\cdot \mathrm{rank}(B)}{\delta}$.
    \end{remark}}

In analogy to the Gaussian case, the following lemma shows that a potential linear dependence of $N$ on $n$ cannot be avoided in general.
\begin{lemma}
    Let $n$ be even and consider the traceless matrix $B = \begin{bmatrix} & & & 1 \\ & & 1 & \\ & \iddots & & \\ 1 & & & \end{bmatrix}$. Then
    \begin{equation*}
        \mathbb{P} \left ( |\traceR_N(B)| \le \varepsilon \right ) \le \varepsilon \sqrt{\frac{N}{\pi n}} 
    \end{equation*}
    for every $\varepsilon > 0$.
\end{lemma}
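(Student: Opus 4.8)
The plan is to mirror the proof of Lemma~\ref{lemma:tightG}, but with the continuous density estimate replaced by a bound on the probability mass function of a lattice-valued random variable. First I would evaluate the quadratic form explicitly. Since $B$ is the exchange matrix, $B_{ij} = 1$ exactly when $j = n+1-i$, so for a Rademacher vector $x \in \{-1,1\}^n$,
\begin{equation*}
x^T B x = \sum_{i=1}^n x_i x_{n+1-i} = 2\sum_{i=1}^{n/2} x_i x_{n+1-i}.
\end{equation*}
The index pairs $\{i, n+1-i\}$ for $i = 1, \ldots, n/2$ are disjoint, so the products $W_i := x_i x_{n+1-i}$ are independent Rademacher variables; a single-sample estimate is therefore twice a sum of $n/2$ independent signs. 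Aggregating the $N$ independent copies gives
\begin{equation*}
N\cdot\traceR_N(B) = 2S, \qquad S := \sum_{m=1}^{M} R_m, \quad M := \frac{Nn}{2},
\end{equation*}
with $R_1, \ldots, R_M$ i.i.d.\ Rademacher, i.e.\ $S$ is a simple symmetric random walk after $M$ steps. The event $\{|\traceR_N(B)| \le \varepsilon\}$ thus coincides with $\{|S| \le N\varepsilon/2\}$, the event that the walk lies in a symmetric window about the origin.

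Next I would bound this window probability by the number of attainable lattice points it contains times the largest atom of $S$ — the discrete counterpart of the ``interval length $\times$ maximal density'' estimate used in Lemma~\ref{lemma:tightG}. By symmetry and unimodality of the binomial weights, the largest atom is the central one, and the same type of central binomial coefficient inequality $\frac{1}{2^{2k}}\binom{2k}{k} \le \frac{1}{\sqrt{\pi k}}$ from~\cite{centralBinCoeff} invoked in the Gaussian case yields
\begin{equation*}
\max_k \mathbb{P}(S = k) = \binom{M}{\lfloor M/2\rfloor} 2^{-M} \le \sqrt{\frac{2}{\pi M}} = \frac{2}{\sqrt{\pi N n}}
\end{equation*}
(treating even and odd $M$ separately). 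Because the attainable values of $S$ are spaced two apart, the window of length $N\varepsilon$ contains at most $N\varepsilon/2$ of them, whence
\begin{equation*}
\mathbb{P}(|\traceR_N(B)| \le \varepsilon) = \mathbb{P}(|S| \le N\varepsilon/2) \le \frac{N\varepsilon}{2}\cdot\frac{2}{\sqrt{\pi N n}} = \varepsilon\sqrt{\frac{N}{\pi n}},
\end{equation*}
as claimed.

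The main new difficulty relative to Lemma~\ref{lemma:tightG} is the discreteness of $\traceR_N(B)$: the target variable is lattice-valued rather than carrying a continuous density, so ``maximal density'' must become the maximal atom of a binomial law and ``interval length'' must become a lattice-point count. Identifying the maximal atom is clean via unimodality, though one should verify the uniform bound $\binom{M}{\lfloor M/2\rfloor}2^{-M} \le \sqrt{2/(\pi M)}$ also in the odd-$M$ case, where the mode sits at $\pm 1$ rather than at $0$. The step requiring the most care is the lattice-point count: counting the points of fixed parity in a closed window of length $N\varepsilon$ produces the leading term $N\varepsilon/2$ together with a lower-order boundary correction, and one must confirm that the leading term is what governs the bound in the relevant regime ($\|B\|_2 = 1$ and $\varepsilon$ of order one), which is precisely the regime in which this lemma certifies that $N$ must grow proportionally to $n$.
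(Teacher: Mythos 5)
Your proposal is correct and follows essentially the same route as the paper's own proof: rewrite the quadratic form as twice a sum of $nN/2$ independent Rademacher variables, view the event as a symmetric window around the origin for the resulting binomial/random-walk distribution, and bound the window probability by (number of attainable values) $\times$ (central atom), using the same inequality $\frac{1}{2^{2k}}\binom{2k}{k} \le \frac{1}{\sqrt{\pi k}}$. If anything, you are slightly more careful than the paper, which tacitly assumes $nN/4$ is an integer and bounds the lattice-point count by $N\varepsilon/2$ without addressing the odd-parity case or the boundary correction you flag.
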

\begin{proof}
We first note that $\traceR_N(B) = \frac{2}{N} \sum_{i=1}^{nN/2} Z_i$ with independent  Rademacher random variables $Z_i$. 
In turn, $ \mathbb{P} \left ( |\traceR_N(B)| \le \varepsilon \right ) = \mathbb{P} \left ( \left \lvert \sum_{i=1}^{nN/2} Z_i \right \rvert \le \frac{N\varepsilon}{2} \right )$ equals the probability that the number of variables satisfying $Z_i = 1$ is at least $\frac{n-\varepsilon}{4}N$ and at most $\frac{n+\varepsilon}{4}N$. Therefore,
\begin{align*}
    \mathbb{P} \big( |\traceR_N(B)| \le \varepsilon \big) & = \frac{1}{2^{nN/2}} \sum_{i=\lceil\frac{n-\varepsilon}{4}N\rceil}^{\lfloor\frac{n + \varepsilon}{4}N\rfloor} \binom{nN/2}{i} \le \frac{N\varepsilon}{2} \cdot \frac{1}{2^{nN/2}}\cdot \binom{nN/2}{nN/4} \\
    & \le \frac{N\varepsilon}{2} \cdot \frac{2}{\sqrt{\pi nN}} = \varepsilon \sqrt{\frac{N}{\pi n}},
\end{align*}
where we used the inequality $\frac{1}{2^{2k}}\binom{2k}{k} \le \frac{1}{\sqrt{\pi k}}$. 
\end{proof}
We do not report a figure analogous to Figure~\ref{fig:tightG} because the observed errors are very similar to the Gaussian case.

For SPSD matrices, a relative error estimate follows from Corollary~\ref{cor:rademacher} similarly to what has been discussed in Remark~\ref{rmk:SPDgauss} for Gaussian vectors. We recall that $\rho = \| B \|_F^2 / \| B \|_2^2$ denotes the stable rank of $B$.
    \begin{corollary} \label{cor:spdrademacher}
        For a nonzero SPSD matrix $B$, we have 
        \begin{equation*}
            \mathbb{P} \left ( \frac{|\traceR_N(B)-\trace(B)|}{\trace(B)} \ge \varepsilon \right ) \le \delta \quad \text{for} \quad N \ge \frac{8}{\varepsilon^2}(1  + \varepsilon)\new{\mu} \log\frac{2}{\delta}, \quad \new{\text{where} \quad \mu := \frac{\|B\|_2}{\trace(B)}}.
        \end{equation*}
    \end{corollary}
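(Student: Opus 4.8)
The plan is to reduce everything to Corollary~\ref{cor:rademacher} by the rescaling trick already used for Gaussian vectors in Remark~\ref{rmk:SPDgauss}, while handling the extra complication that the Rademacher bound is stated in terms of the off-diagonal part $B - \D_B$ rather than $B$ itself. Since $B$ is nonzero and SPSD we have $\trace(B) > 0$, so the relative-error event in the statement is identical to the absolute-error event $|\traceR_N(B) - \trace(B)| \ge \varepsilon\,\trace(B)$. First I would substitute $\varepsilon \mapsto \varepsilon\,\trace(B)$ into Corollary~\ref{cor:rademacher}, which guarantees failure probability at most $\delta$ as soon as
\begin{equation*}
N \ge \frac{8}{\varepsilon^2}\cdot\frac{\|B - \D_B\|_F^2}{\trace(B)^2}\log\frac{2}{\delta} + \frac{8}{\varepsilon}\cdot\frac{\|B - \D_B\|_2}{\trace(B)}\log\frac{2}{\delta}.
\end{equation*}
The remaining task is then to bound each of the two quotients by $\mu = \|B\|_2/\trace(B)$, after which the stated condition follows by adding the two contributions.

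The Frobenius quotient is routine and mirrors Remark~\ref{rmk:SPDgauss}: dropping the diagonal only decreases the norm, $\|B - \D_B\|_F \le \|B\|_F$, and for SPSD $B$ with eigenvalues $\lambda_i \ge 0$ one has $\|B\|_F^2 = \sum_i \lambda_i^2 \le \|B\|_2 \sum_i \lambda_i = \|B\|_2\,\trace(B)$, so that $\|B - \D_B\|_F^2/\trace(B)^2 \le \mu$.

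I expect the spectral quotient to be the main obstacle, because it is precisely the point where the Rademacher analysis departs from the Gaussian one in Remark~\ref{rmk:SPDgauss}, in which no diagonal was removed. The generic estimate $\|B - \D_B\|_2 \le 2\|B\|_2$ is too lossy here, since it would only produce the weaker constant $(1 + 2\varepsilon)$ in place of $(1 + \varepsilon)$. Instead I would use that $B$ is SPSD to establish the sharper bound $\|B - \D_B\|_2 \le \|B\|_2$. Writing $B - \D_B = B + (-\D_B)$ with $\D_B \succeq 0$ and invoking Weyl's inequalities, the largest eigenvalue satisfies $\lambda_{\max}(B - \D_B) \le \lambda_{\max}(B) + \lambda_{\max}(-\D_B) \le \|B\|_2$ since $\lambda_{\max}(-\D_B) = -\lambda_{\min}(\D_B) \le 0$, while the smallest satisfies $\lambda_{\min}(B - \D_B) \ge \lambda_{\min}(B) - \lambda_{\max}(\D_B) \ge -\|B\|_2$, using $\lambda_{\min}(B) \ge 0$ together with the fact that every diagonal entry of an SPSD matrix is at most its largest eigenvalue. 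Hence $\|B - \D_B\|_2 \le \|B\|_2$, giving $\|B - \D_B\|_2/\trace(B) \le \mu$, and combining the two quotient bounds turns the displayed requirement into $N \ge \frac{8}{\varepsilon^2}(1+\varepsilon)\mu\log\frac{2}{\delta}$, as claimed.
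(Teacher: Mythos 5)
Your proposal is correct and follows essentially the same route as the paper: rescale $\varepsilon \mapsto \varepsilon\,\trace(B)$ in Corollary~\ref{cor:rademacher}, use $\| B - \D_B \|_F^2 \le \| B \|_F^2 \le \| B \|_2 \trace(B)$, and sharpen the generic factor-$2$ estimate to $\| B - \D_B \|_2 \le \| B \|_2$ for SPSD $B$, which is exactly the key inequality the paper isolates. The only difference is in how that inequality is proved---you use Weyl's inequalities on $\lambda_{\max}(B - \D_B)$ and $\lambda_{\min}(B - \D_B)$, while the paper uses the quadratic-form argument $|y^T (B - \D_B) y| \le \max\{y^T B y,\, y^T \D_B y\} \le \| B \|_2$ (cf.~\cite{Bhatia1989})---and both are valid one-line arguments.
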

    \begin{proof}
     First of all, it is immediate that $\| B - \D_B \|_F \le \| B \|_F$. As shown, e.g., in~\cite[Theorem 4.1]{Bhatia1989}, 
    the same holds for the spectral norm when $B$ is SPSD. For convenience, we provide a short proof: For every $y\in \R^n$ it holds that
     \begin{equation*}
        | y^T (B - \D_B) y| \le \max\{y^T B y, y^T \D_B y\} \le \max\{ \| B \|_2, \| \D_B \|_2 \} \le \| B \|_2,
     \end{equation*}
     where the first inequality uses that both $y^T B y$ and $y^T \D_B y$ are nonnegative. By taking the maximum with respect to all vectors of norm $1$ one obtains $\| B - \D_B \|_2$ on the left-hand side, which shows that it is bounded by $\| B \|_2$.
     \new{Now, the claimed result follows from Corollary~\ref{cor:rademacher} using the arguments of Remark~\ref{rmk:SPDgauss}.}
    \end{proof}
 
Corollary~\ref{cor:spdrademacher} improves the result from~\cite[Theorem 1]{Ascher2015}, which requires $N \ge \frac{6}{\varepsilon^2} \log\frac{2}{\delta}$; a lower bound that does not improve as \new{$\mu$ decreases.} 

    \section{Lanczos method to approximate quadratic forms}\label{sec:Lanczos}
    
    Let us now consider the problem of estimating the log determinant through $\log(\det(A)) = \trace(\log(A))$, or more generally the problem of computing the trace of $f(A)$ for an analytic function $f$.
    
    Applying a trace estimator to $\trace(f(A))$ requires the (approximate) computation of the quadratic forms $x^T f(A) x$ for fixed vectors $x \in \R^n$. Following~\cite{Ubaru2017}, we use the Lanczos method, Algorithm~\ref{alg:lanczos}, for this purpose.
    
    \begin{algorithm}
	\caption{Lanczos method to approximate quadratic form $x^T f(A) x$ \label{alg:lanczos}}
	 \begin{algorithmic}[1]
	 \REQUIRE{Matrix $A \in \R^{n\times n}$, nonzero vector $x\in\R^n$, number of iterations $m$}
	 \ENSURE{Approximation of $x^T f(A) x$}
     \STATE{Initialize $u_1 \leftarrow x / \|x\|_2$ and $\beta_0\leftarrow 0$}
     \FOR{$i = 1, \ldots, m$}
        \STATE{$\alpha_i \leftarrow u_i^T A u_i$}
        \STATE{$r_i \leftarrow Au_i - \alpha_i u_i - \beta_{i-1} u_{i-1}$}
        \STATE{$\beta_i \leftarrow \| r_i \|_2$}
        \STATE{$u_{i+1} \leftarrow r_i/\beta_i$}
    \ENDFOR
    \STATE{$T_m \leftarrow \begin{scriptsize}\begin{bmatrix} \alpha_1 & \beta_1 & & \\ \beta_1 & \alpha_2 & \ddots & \\ & \ddots & \ddots & \beta_{m-1} \\ & & \beta_{m-1} & \alpha_m \end{bmatrix}\end{scriptsize}$}
    \STATE{Return $\|x\|_2^2 \cdot e_1^T f(T_m) e_1$}
	\end{algorithmic}
	\end{algorithm}
    
    For theoretical considerations, it is helpful to view the quadratic form as an integral.
    \new{For this purpose, we consider the spectral decomposition $A = Q\Lambda Q^T$, $\Lambda = \diag(\lambda_1, \ldots, \lambda_n)$, with $\lambda_{\min} = \lambda_1 \le \lambda_2 \le \cdots \le \lambda_n = \lambda_{\max}$. Then 
    \[
    x^T f(A) x = I:= \int_{\lambda_{\min}}^{\lambda_{\max}} f(\lambda) \,\text{d}\mu(\lambda),
    \]
    with the piecewise constant measure
    \begin{equation}\label{eq:mu}
        \mu(\lambda) := \sum_{i=1}^n z_i^2 \chi_{[\lambda_i,\infty)}(\lambda),\quad z := Q^T x,
    \end{equation}
    where $\chi$ denotes the indicator function.}
It is well known~\cite[Theorem 6.2]{GolubMeurant2010} that the approximation $I_m$ returned by the $m$-points Gaussian quadrature rule applied to $I$ is identical to the approximation returned by $m$ steps of the Lanczos method:
    \begin{equation*}
    I_m := \| x \|_2^2 \cdot \new{e_1^T f(T_m) e_1}.
    \end{equation*}    
    
    To bound the error $|I - I_m|$, the analysis in~\cite{Ubaru2017} proceeds by using existing results on the polynomial approximation error of analytic functions. Although our analysis is along the same lines, it differs in a key technical aspect; we derive and use an improved error bound for the approximation of the logarithm; see Corollary~\ref{cor:numLanczosSteps}. We have also noted two minor erratas in~\cite{Ubaru2017}; see the proof of Theorem~\ref{thm:ellipse} and the remark after Corollary~\ref{cor:convgeneral} for details.
    
    \begin{theorem} \label{thm:ellipse}
         Let $f:[-1,1] \to \R$ admit an analytic continuation to a Bernstein ellipse $E_{\rho_0}$ with foci $\pm 1$ and elliptical radius $\rho_0$. For $1 < \rho < \rho_0$, let $M_{\rho}$ be the maximum of $|f(z)|$ on
         $E_{\rho}$.
         Then 
         \begin{equation*}
            | I - I_m | \le \| x \|_2^2 \cdot \frac{4 M_{\rho}}{1-\rho^{-1}} \rho^{-2m}.
         \end{equation*}
    \end{theorem}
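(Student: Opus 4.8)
The plan is to exploit the two defining features of $m$-point Gaussian quadrature: it integrates exactly every polynomial of degree at most $2m-1$, and its weights (the Christoffel numbers) are strictly positive. First I would record that both $I$ and $I_m$ are \emph{positive} linear functionals with the same total mass. Indeed, the measure $\mu$ from~\eqref{eq:mu} has total mass $\int \mathrm{d}\mu = \sum_{i=1}^n z_i^2 = \| z \|_2^2 = \| x \|_2^2$ because $z = Q^T x$ and $Q$ is orthogonal; and since $I_m$ is exact for the constant function $1$ (a degree-$0$ polynomial), the sum of the quadrature weights equals this same mass $\| x \|_2^2$. Positivity then gives, for any continuous $g$,
\[
|I(g)| \le \| x \|_2^2 \, \| g \|_{\infty,[-1,1]}, \qquad |I_m(g)| \le \| x \|_2^2 \, \| g \|_{\infty,[-1,1]}.
\]

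Next I would introduce an arbitrary polynomial $p$ of degree at most $2m-1$. Exactness of the quadrature rule yields $I(p) = I_m(p)$, so that $I - I_m = I(f-p) - I_m(f-p)$. Combining this identity with the two bounds above and the triangle inequality gives $|I - I_m| \le 2 \| x \|_2^2 \, \| f - p \|_{\infty,[-1,1]}$, and minimizing over all admissible $p$ reduces the problem to the best uniform polynomial approximation error of degree $2m-1$:
\[
|I - I_m| \le 2 \| x \|_2^2 \, E_{2m-1}(f), \qquad E_{2m-1}(f) := \min_{\deg p \le 2m-1} \| f - p \|_{\infty,[-1,1]}.
\]

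It then remains to estimate $E_{2m-1}(f)$ from the analyticity of $f$ on the Bernstein ellipse. For this I would take $p$ to be the truncation at degree $2m-1$ of the Chebyshev expansion $f = \sum_{k \ge 0} a_k T_k$ and invoke the classical bound $|a_k| \le 2 M_{\rho}\, \rho^{-k}$, which follows from the contour-integral representation of the Chebyshev coefficients over $E_{\rho}$ together with $|f| \le M_{\rho}$ there. Since $\| T_k \|_{\infty,[-1,1]} = 1$, summing the geometric tail gives
\[
\| f - p \|_{\infty,[-1,1]} \le \sum_{k = 2m}^{\infty} |a_k| \le \sum_{k=2m}^{\infty} 2 M_{\rho}\, \rho^{-k} = \frac{2 M_{\rho}\, \rho^{-2m}}{1 - \rho^{-1}},
\]
and substituting into the previous display produces exactly the claimed constant $\frac{4 M_{\rho}}{1 - \rho^{-1}} \rho^{-2m}$.

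The bulk of the argument is routine; the one point demanding care is the bookkeeping of the exponent and the constant. The degree of exactness is $2m-1$, yet the Chebyshev tail begins at $k = 2m$, which is precisely what converts $\rho^{-(2m-1)}$ into the sharper $\rho^{-2m}$; I expect this off-by-one to be where the minor erratum in~\cite{Ubaru2017} enters, so I would verify the coefficient bound $|a_k| \le 2 M_{\rho}\rho^{-k}$ and the geometric summation explicitly to confirm the final constant.
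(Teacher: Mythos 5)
Your proof is correct and follows essentially the same route the paper indicates: exactness of $m$-point Gaussian quadrature for polynomials of degree up to $2m-1$, combined with positivity of the weights and of the measure $\mu$, reduces the error to twice the best uniform approximation error $E_{2m-1}(f)$, which is then bounded via the Chebyshev coefficient decay $|a_k| \le 2 M_{\rho}\rho^{-k}$, giving exactly the constant $\frac{4M_{\rho}}{1-\rho^{-1}}\rho^{-2m}$. One small clarification: the erratum in~\cite{Ubaru2017} that the paper corrects is not the off-by-one bookkeeping you flag (that part of~\cite{Ubaru2017} is fine), but rather the claim that odd-degree Chebyshev polynomials integrate to zero by symmetry --- true for the Lebesgue measure but false for the discrete measure $\mu$ of~\eqref{eq:mu} --- which is precisely why one must sum the full geometric tail as you do and obtain the denominator $1-\rho^{-1}$ instead of $1-\rho^{-2}$.
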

\begin{proof} 
As in~\cite{Ubaru2017}, this result follows directly from bounds on the  polynomial approximation error of analytic functions via Chebyshev expansion, combined with the fact that 
$m$-points Gaussian quadrature is exact for polynomials up to degree $2m-1$.
However, the proof of~\cite[Theorem 4.2]{Ubaru2017} uses an extra ingredient, which seems to be wrong. It claims that the integration error for odd-degree Chebyshev polynomials is zero thanks to symmetry. While this fact is indeed true for the standard Lebesgue measure, it does not hold for the measure~\eqref{eq:mu}. In turn, one obtains the slightly worse factor $1-\rho^{-1}$ in the denominator, compared to the factor $1-\rho^{-2}$ that would have been obtained from~\cite[Theorem 4.2]{Ubaru2017} translated into our setting.
\end{proof}

The affine linear transformation
\begin{equation*}
        \varphi:[\lambda_{\min}, \lambda_{\max}] \to [-1, 1], \qquad x \mapsto \frac{2}{\lambda_{\max} - \lambda_{\min}}t - \frac{\lambda_{\max} + \lambda_{\min}}{\lambda_{\max} - \lambda_{\min}},
\end{equation*}
is used to map an interval $[\lambda_{\min}, \lambda_{\max}]$ containing the eigenvalues of $A$ to the interval $[-1,1]$ of Theorem~\ref{thm:ellipse}. Defining $g:= f \circ \varphi^{-1}$, one has
\begin{equation} \label{eq:relationsfg}
x^T g(\varphi(A)) x = x^T f(A) x, \quad e_1^T g(\varphi(T_m))e_1 = e_1^T f(T_m) e_1.
\end{equation}
By its shift \new{and scaling} invariance, the Lanczos method with $g$, $\varphi(A)$, and $x$ returns the approximation $e_1^T g(\varphi(T_m))e_1$. This allows us to  apply Theorem~\ref{thm:ellipse}. Combined with the relations~\eqref{eq:relationsfg}, the following result is obtained. 
\begin{corollary} \label{cor:convgeneral} With the notation introduced above, it holds that
         \begin{equation*}
            \left\lvert x^T f(A) x - \|x\|_2^2 \cdot e_1^T f(T_m) e_1  \right\rvert \le \| x \|_2^2 \cdot \frac{4 M_{\rho}}{1-\rho^{-1}} \rho^{-2m},
         \end{equation*}
\end{corollary}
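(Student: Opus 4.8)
The plan is to reduce the statement to a direct application of Theorem~\ref{thm:ellipse}, exploiting the change of variables $\varphi$ and the shift-and-scaling invariance of the Lanczos method that has already been recorded in the discussion preceding the corollary. In essence, all the ingredients are in place, and the proof amounts to assembling them in the right order.

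First I would observe that, by construction, $\varphi$ maps the spectral interval $[\lambda_{\min},\lambda_{\max}]$ of $A$ onto $[-1,1]$; hence $\varphi(A)$ is symmetric with all its eigenvalues contained in $[-1,1]$, which is precisely the setting required by Theorem~\ref{thm:ellipse} when it is applied to the function $g = f\circ\varphi^{-1}$. The hypothesis that $f$ admits an analytic continuation to a Bernstein ellipse translates into the same assumption for $g$, and in this application $M_{\rho}$ is to be read as $\max_{z\in E_{\rho}}|g(z)|$.

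Second, I would invoke the shift-and-scaling invariance of Algorithm~\ref{alg:lanczos}. Since $\varphi$ is affine, the Krylov subspaces $\mathcal{K}_m(A,x)$ and $\mathcal{K}_m(\varphi(A),x)$ coincide, so running the Lanczos method on the input $(g,\varphi(A),x)$ produces exactly the tridiagonal matrix $\varphi(T_m)$ and the quadrature value $\|x\|_2^2\, e_1^T g(\varphi(T_m)) e_1$. Applying Theorem~\ref{thm:ellipse} to $g$ and $\varphi(A)$ then yields
\begin{equation*}
\bigl| x^T g(\varphi(A)) x - \|x\|_2^2\, e_1^T g(\varphi(T_m)) e_1 \bigr| \le \|x\|_2^2 \cdot \frac{4 M_{\rho}}{1-\rho^{-1}} \rho^{-2m}.
\end{equation*}
Finally, the two functional-calculus identities in~\eqref{eq:relationsfg}, namely $g(\varphi(A)) = f(A)$ and $g(\varphi(T_m)) = f(T_m)$ (both valid because $\varphi^{-1}\circ\varphi$ is the identity and $A$, $T_m$ are symmetric), allow me to replace $x^T g(\varphi(A)) x$ by $x^T f(A) x$ and $e_1^T g(\varphi(T_m)) e_1$ by $e_1^T f(T_m) e_1$, which gives the claimed bound.

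As for the main obstacle, I expect there to be no real difficulty once the translation invariance of Lanczos is granted: every step is routine bookkeeping. The one point deserving genuine care is the assertion that the Lanczos recurrence applied to $\varphi(A)$ returns $\varphi(T_m)$ rather than some unrelated tridiagonal matrix; this follows from the affine nature of $\varphi$ together with the uniqueness of the Lanczos tridiagonalization on the shared Krylov subspace $\mathcal{K}_m(A,x)=\mathcal{K}_m(\varphi(A),x)$. Everything else is simply the combination of Theorem~\ref{thm:ellipse} with the relations~\eqref{eq:relationsfg}.
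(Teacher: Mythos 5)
Your proposal is correct and follows essentially the same route as the paper: the paper's own proof consists precisely of applying Theorem~\ref{thm:ellipse} to $g = f\circ\varphi^{-1}$ and $\varphi(A)$, invoking the shift-and-scaling invariance of the Lanczos method (so that it returns $\varphi(T_m)$ and hence the value $\|x\|_2^2\, e_1^T g(\varphi(T_m)) e_1$), and then translating back via the relations~\eqref{eq:relationsfg}. Your additional remark on why Lanczos applied to $\varphi(A)$ yields $\varphi(T_m)$ (coinciding Krylov subspaces plus uniqueness of the tridiagonalization) is exactly the justification the paper leaves implicit.
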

Note that $M_\rho$ is the maximum of $g$ on $E_\rho$, which is equal to the maximum of $f$ on the transformed ellipse with foci $\lambda_{\min}, \lambda_{\max}$,
and elliptical radius $(\lambda_{\max}-\lambda_{\min}) \rho /2$. 
The result of Corollary~\ref{cor:convgeneral} differs from the corresponding result in~\cite[page 1087]{Ubaru2017}, which features an additional, erroneous factor $(\lambda_{\max}(A) - \lambda_{\min}(A))/2$.
    
    For the special case of the logarithm, the following result is obtained.
    \begin{corollary}\label{cor:numLanczosSteps}
        Let $A \in \R^{n \times n}$ be SPD with condition number $\kappa(A)$,
        $f\equiv \log$ and $x \in \R^n \backslash\{0\}$. Then the error of the Lanczos method after $m$ steps satisfies
        \begin{equation*}
            | x^T \log(A) x -  \|x\|_2^2 \cdot e_1^T \log(T_m) e_1 | \le  \new{c_A} \|x\|_2^2\left( \frac{\sqrt{\kappa(A)+1}-1}{\sqrt{\kappa(A)+1}+1} \right)^{2m}.
        \end{equation*}
        \new{where $c_A := 2 (\sqrt{\kappa(A)+1}+1) \log(2\kappa(A))$.}
    \end{corollary}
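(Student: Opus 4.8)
The plan is to specialize the general Lanczos error bound of Corollary~\ref{cor:convgeneral} to $f=\log$, which reduces everything to two tasks: choosing a good Bernstein radius $\rho$ and bounding $M_\rho$. Before either, I would record a scale-invariance property that turns out to be essential. If $A$ is replaced by $\alpha A$ for $\alpha>0$, the Lanczos tridiagonal matrix becomes $\alpha T_m$, and since $\log(\alpha T_m)=\log(\alpha)I+\log(T_m)$ and $\log(\alpha A)=\log(\alpha)I+\log(A)$, the error $|x^T\log(A)x-\|x\|_2^2\, e_1^T\log(T_m)e_1|$ is unchanged. The condition number $\kappa=\kappa(A)$ is likewise invariant, so I am free to rescale $A$ to whichever normalization makes $M_\rho$ smallest; this is what will remove any spurious dependence on $\lambda_{\min}$ from the final constant.

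For the choice of $\rho$: Corollary~\ref{cor:convgeneral} requires $g=\log\circ\varphi^{-1}$ to be analytic on $E_\rho$, i.e.\ the image ellipse $\mathcal E=\varphi^{-1}(E_\rho)$, with foci $\lambda_{\min},\lambda_{\max}$, must avoid the branch cut $(-\infty,0]$ of the logarithm. A short computation locates the singularity $\varphi^{-1}(\,\cdot\,)=0$ at the real point $-\tfrac{\kappa+1}{\kappa-1}$ in the $t$-plane, and shows that $E_\rho$ first reaches it at the critical radius $\rho_c=\tfrac{\sqrt\kappa+1}{\sqrt\kappa-1}$. I would deliberately back off from $\rho_c$ by taking $\rho=\tfrac{\sqrt{\kappa+1}+1}{\sqrt{\kappa+1}-1}$ (replacing $\kappa$ by $\kappa+1$ under the root), which keeps $\mathcal E$ strictly inside the critical ellipse---its leftmost point works out to $\lambda_{\min}/\kappa>0$, so $g$ is analytic and $M_\rho<\infty$---while losing essentially nothing in the rate. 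With this $\rho$ one computes $\tfrac{4}{1-\rho^{-1}}=2(\sqrt{\kappa+1}+1)$ and $\rho^{-2m}=\big(\tfrac{\sqrt{\kappa+1}-1}{\sqrt{\kappa+1}+1}\big)^{2m}$, which already supply the prefactor and the geometric factor in the statement.

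It remains to show $M_\rho=\max_{w\in\mathcal E}|\log w|\le\log(2\kappa)$, and this is where the real work lies. The extreme points of $\mathcal E$ on the real axis are $w_-=\lambda_{\min}/\kappa$ and $w_+=\lambda_{\min}(\kappa+1-1/\kappa)$, with scale-invariant ratio $w_+/w_-=\kappa^2+\kappa-1$; moreover a critical-point check confirms these are precisely the nearest and farthest points of $\mathcal E$ from the origin, so $|w|\in[w_-,w_+]$ throughout $\mathcal E$. Using the scaling freedom from the first step, I would center the logarithm by rescaling so that $w_-w_+=1$, giving $|\log|w||\le L:=\tfrac12\log(\kappa^2+\kappa-1)$ on $\mathcal E$. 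The main obstacle is the imaginary part $\arg w$: I would argue that the maximum of $|\log w|^2=(\log|w|)^2+(\arg w)^2$ over $\mathcal E$ is attained at the real endpoints $w_\pm$ (where $\arg w=0$), so that $M_\rho=L$. This uses the tangent-line bound $\tan\theta_{\max}=b/\sqrt{c^2-a^2}$ on the aperture of $\mathcal E$ seen from $0$ (with $c,a,b$ the center and semi-axes), together with the observation that wherever $\arg w$ is large the modulus $|w|$ is close to $\sqrt{w_-w_+}=1$, so $(\log|w|)^2$ is correspondingly small and cannot combine with $(\arg w)^2$ to exceed $L^2$. Granting $M_\rho=L$, the bound closes from the trivial estimate $\kappa^2+\kappa-1<4\kappa^2$, which yields $L<\tfrac12\log(4\kappa^2)=\log(2\kappa)$.

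Assembling the pieces, the error is bounded by $\tfrac{4M_\rho}{1-\rho^{-1}}\,\|x\|_2^2\,\rho^{-2m}\le 2(\sqrt{\kappa+1}+1)\log(2\kappa)\,\|x\|_2^2\,\big(\tfrac{\sqrt{\kappa+1}-1}{\sqrt{\kappa+1}+1}\big)^{2m}=c_A\|x\|_2^2\big(\tfrac{\sqrt{\kappa+1}-1}{\sqrt{\kappa+1}+1}\big)^{2m}$, exactly as claimed. I expect that controlling the argument of $w$ on $\mathcal E$---rigorously establishing that the maximum of $|\log w|$ sits at the real endpoints rather than somewhere in the upper part of the ellipse---will be the one genuinely delicate point; everything else is either the scale-invariance bookkeeping or the explicit, if slightly tedious, identification of $\rho$, the prefactor, and the extreme points of $\mathcal E$.
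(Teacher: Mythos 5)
Your route is the same as the paper's: exploit scale invariance to normalize $A$ (your geometric-mean normalization $w_-w_+=1$ versus the paper's choice $\lambda=1/(2\lambda_{\min})$ is immaterial), apply Corollary~\ref{cor:convgeneral} with the identical $\rho=\frac{\sqrt{\kappa+1}+1}{\sqrt{\kappa+1}-1}$, compute the prefactor $4/(1-\rho^{-1})=2(\sqrt{\kappa+1}+1)$, and reduce everything to showing that $\max_{w\in\mathcal{E}}|\log w|$ is attained at the real intersection points of the image ellipse. Your bookkeeping (critical radius $\frac{\sqrt{\kappa}+1}{\sqrt{\kappa}-1}$, endpoints with ratio $\kappa^2+\kappa-1$, the fact that $|w|$ ranges over $[w_-,w_+]$ on $\mathcal{E}$) is all correct.

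The genuine gap is exactly the step you defer: proving that the maximum of $|\log w|$ over $\mathcal{E}$ sits on the real axis. This is the entire technical content of the corollary --- the paper devotes Lemma~\ref{lemma:circle} and Corollary~\ref{cor:logellipse} to it --- and the mechanism you sketch is quantitatively false. You claim that wherever $\arg w$ is large, $|w|$ is close to $\sqrt{w_-w_+}=1$ so that $(\log|w|)^2$ is small. Test this at the point of maximal argument: with your normalization $c^2-a^2=w_-w_+=1$ one finds $|w|^2=(1+b^2)/c^2$ there, and for large $\kappa$ one has $b\sim\sqrt{\kappa}$, $c\sim\kappa/2$, hence $|w|\sim 2/\sqrt{\kappa}$ and $|\log|w||\sim\tfrac12\log\kappa\sim L/2$ --- not small at all. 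The desired inequality $(\log|w|)^2+(\arg w)^2\le L^2$ does survive at that point, but only because $(\arg w)^2\le(\pi/2)^2$ is eventually dominated by the remaining $\tfrac34 L^2$; that reasoning breaks down as $\kappa\to 1$, where $L\to 0$ and the two sides become comparable (roughly $2\varepsilon^2$ versus $\tfrac94\varepsilon^2$ for $\kappa=1+\varepsilon$), and in any case it only covers two points of the ellipse rather than all of it. What is needed is an argument valid for every point of $\mathcal{E}$ and every $\kappa>1$. The paper's resolution is different and is the part you would have to supply: enclose $\mathcal{E}$ in the circle of center $a$ and radius $\sqrt{a^2-1}$ tangent to $\mathcal{E}$ at whichever real endpoint has the larger value of $|\log|$; on such a circle points pair up as $r_{\pm}e^{\mathrm{i}\theta}$ with $r_-r_+=1$, and a monotonicity computation (using $\arctan x<\arcsinh x$, the monotonicity of $x/\sin x$ and of $\arcsinh(x)/x$) shows that $|\log|$ decreases as $\theta$ moves away from the real axis. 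Without a lemma of this strength, your proof establishes the routine parts and leaves the hard one as an expectation.
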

    
    \begin{proof}
    The proof consists of applying Corollary~\ref{cor:convgeneral} to a rescaled matrix. More specifically, we choose $B := \lambda A$ 
    with $\lambda := 1/ ( 2\lambda_{\min} ) > 0$. The tridiagonal matrix returned by the Lanczos method with $A$ replaced by $B$ satisfies $T^B_m = \lambda T_m$. Together with the identity $\log(\lambda A) = \log\lambda I + \log(A)$, this implies 
\[
    x^T \log(A) x -  \|x\|_2^2\cdot e_1^T \log(T_m) e_1 = x^T \log(B) x -  \|x\|_2^2 \cdot e_1^T \log(T_m^{B}) e_1.
\]
    Note that the smallest/largest eigenvalues of $B$ are given by $1/2$ and $\kappa(A)/2$, respectively. Applying Corollary~\ref{cor:convgeneral} to $B$ with\footnote{In fact, it is possible to choose 
    $\rho = \frac{\sqrt{\kappa(A)+\varepsilon}+1}{\sqrt{\kappa(A)+\varepsilon}-1}$ for arbitrary $\varepsilon > 0$.} 
    $\rho := \frac{\sqrt{\kappa(A)+1}+1}{\sqrt{\kappa(A)
    +1}-1}$ thus
    gives
    \[
    | x^T \log(A) x -  \|x\|_2^2\cdot e_1^T \log(T_m) e_1 | \le \| x \|_2^2 \cdot \frac{4 M_{\rho}}{1-\rho^{-1}} \rho^{-2m}.
    \]
    The constant $M_{\rho}$ is the maximum absolute value of the logarithm on the ellipse with foci $1/2$ and $\kappa(A)/2$ that intersects the real axis at
    $\alpha := \frac{1}{2\kappa(A)}$ and $\beta := \frac{\kappa(A)^2 + \kappa(A) - 1}{2\kappa(A)}$. By Corollary~\ref{cor:logellipse}, $M_\rho = |\log(\alpha)| = \log(2 \kappa(A))$, where we used $\alpha \le1/\beta \le 1$. Noting that
    \[
    \frac{4 M_{\rho}}{1-\rho^{-1}} = 
    2 (\sqrt{\kappa(A)+1}+1) \log(2\kappa(A))\, \new{= c_A}
    \]
    concludes the proof.
\end{proof}

 \section{Combined bounds for determinant estimation}

    Combining randomized trace estimation with the Lanczos method, we obtain the following (stochastic) estimate for $\log(\det(A))$:
    \begin{equation*}
        \estGR_{N,m} := \sum_{i=1}^N \| X^{(i)} \|_2^2 \cdot e_1^T \log(T_m^{(i)}) e_1,
    \end{equation*}
    where $X^{(1)}, \ldots, X^{(N)}$ are independent Gaussian or Rademacher random vectors and $T_m^{(i)}$ is the tridiagonal matrix obtained from the Lanczos method with starting vector $X^{(i)} / \| X^{(i)} \|_2$.  By combining the results obtained so far, we now derive new bounds on the number of samples and number of Lanczos steps needed to ensure an approximation error of at most $\varepsilon$ (with high probability).

    \subsection{Standard Gaussian random vectors}
    
     \begin{theorem}\label{thm:finalG}
        Suppose that the following holds for $N$ (number of Gaussian probe vectors) and $m$ (number of Lanczos steps per probe vector):
        \begin{itemize}
         \item[(i)] $N \ge 16 \varepsilon^{-2} (\rho_{\log} \| \log(A) \|_2^2 + \varepsilon \|\log(A) \|_2) \log\frac{4}{\delta}$, where $\rho_{\log}$ denotes the stable rank of $\log(A)$;
         \item[(ii)] $m \ge \frac{\sqrt{\kappa(A)+1}}{4} \log\big( \new{4\varepsilon^{-1} n^2(\sqrt{\kappa(A)+1}+1) \log (2\kappa(A))} \big)$.
        \end{itemize}
        If, additionally, \new{$n\ge 2$ and $N \le \frac{\delta}{2}  \exp \big( \frac{n^2}{16} \big)$} then
        $
            \mathbb{P}(|\estG_{N,m} - \log\det(A)| \ge \varepsilon) \le \delta.
        $
    \end{theorem}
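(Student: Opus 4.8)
\noindent
The plan is to split the total error into a stochastic trace estimation error and a deterministic (Lanczos) quadrature error, and to control each with failure probability $\delta/2$. Writing $\traceG_N(\log(A)) = \frac1N \sum_{i=1}^N (X^{(i)})^T \log(A) X^{(i)}$ for the idealized trace estimate using exact quadratic forms, and recalling $\log\det(A) = \trace(\log(A))$, the triangle inequality gives
\begin{equation*}
  |\estG_{N,m} - \log\det(A)| \le \underbrace{|\estG_{N,m} - \traceG_N(\log(A))|}_{=:\,E_1} + \underbrace{|\traceG_N(\log(A)) - \trace(\log(A))|}_{=:\,E_2}.
\end{equation*}
I would then establish $\mathbb{P}(E_1 \ge \varepsilon/2) \le \delta/2$ and $\mathbb{P}(E_2 \ge \varepsilon/2) \le \delta/2$ and conclude by a union bound.

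For the trace estimation error $E_2$, I would simply invoke Theorem~\ref{thm:gauss} applied to $B = \log(A)$ with accuracy $\varepsilon/2$ and failure probability $\delta/2$. The resulting requirement $N \ge \frac{16}{\varepsilon^2}\big(\|\log(A)\|_F^2 + \tfrac{\varepsilon}{2}\|\log(A)\|_2\big)\log\frac4\delta$ is implied by assumption~(i), since $\|\log(A)\|_F^2 = \rho_{\log}\|\log(A)\|_2^2$ and $\varepsilon/2 \le \varepsilon$.

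The Lanczos error $E_1$ is where the unboundedness of Gaussian vectors makes the argument more delicate than in the Rademacher case. Corollary~\ref{cor:numLanczosSteps} bounds the per-sample error by $c_A \|X^{(i)}\|_2^2 \rho^{-2m}$ with $\rho = \frac{\sqrt{\kappa(A)+1}+1}{\sqrt{\kappa(A)+1}-1}$, so that $E_1 \le c_A\rho^{-2m}\cdot\frac1N\sum_{i=1}^N \|X^{(i)}\|_2^2$. Unlike Rademacher vectors, for which $\|X^{(i)}\|_2^2 = n$ deterministically, the Gaussian norms $\|X^{(i)}\|_2^2 \sim \chi_n^2$ are random and unbounded. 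I would therefore first establish the high-probability event $\{\max_i \|X^{(i)}\|_2^2 \le n^2\}$: the chi-squared tail bound $\mathbb{P}(\|X^{(i)}\|_2^2 \ge n^2) \le \exp(-\tfrac n2(n-1-\log n)) \le \exp(-n^2/16)$, the last step being an elementary inequality valid for $n \ge 2$, combined with a union bound over the $N$ samples gives $\mathbb{P}(\max_i\|X^{(i)}\|_2^2 \ge n^2) \le N\exp(-n^2/16) \le \delta/2$, where the final inequality is exactly the assumption $N \le \frac\delta2\exp(n^2/16)$. On this event, using $\log\rho \ge 2/\sqrt{\kappa(A)+1}$ one checks that assumption~(ii) forces $\rho^{-2m} \le 1/\big(4\varepsilon^{-1}n^2(\sqrt{\kappa(A)+1}+1)\log(2\kappa(A))\big)$, whence $c_A n^2 \rho^{-2m} \le \varepsilon/2$; as each per-sample error is then at most $\varepsilon/2$, so is their average $E_1$.

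The main obstacle, and the only essential departure from the Rademacher analysis, is controlling the random factor $\sum_i \|X^{(i)}\|_2^2$ in $E_1$: the union bound over samples is what necessitates the (at first sight unusual) \emph{upper} bound $N \le \frac\delta2\exp(n^2/16)$ on the number of probe vectors, which guarantees that no single Gaussian vector has anomalously large norm. With both events in hand, $\mathbb{P}(|\estG_{N,m} - \log\det(A)| \ge \varepsilon) \le \mathbb{P}(E_1 \ge \varepsilon/2) + \mathbb{P}(E_2 \ge \varepsilon/2) \le \delta$ follows.
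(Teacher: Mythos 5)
Your proposal is correct and follows essentially the same route as the paper's proof: split the error via the triangle inequality into a Lanczos part and a trace-estimation part, control the Gaussian norms $\|X^{(i)}\|_2^2$ by $n^2$ with probability $1-\delta/2$ using a chi-squared tail bound plus a union bound (which is exactly where the upper bound $N \le \frac{\delta}{2}\exp(n^2/16)$ enters), and handle the trace-estimation part with Theorem~\ref{thm:gauss} applied to $\log(A)$. The only cosmetic difference is that the paper invokes the Laurent--Massart bound $\mathbb{P}(\|X\|_2^2 \ge n + 2\sqrt{nt} + 2t) \le e^{-t}$ with $t = \log\frac{2N}{\delta}$, whereas you use the direct Chernoff bound $\mathbb{P}(\|X\|_2^2 \ge n^2) \le \exp\bigl(-\tfrac{n}{2}(n-1-\log n)\bigr) \le \exp(-n^2/16)$; both are valid under the stated assumptions and lead to the same conclusion.
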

    
    \begin{proof}

For a Gaussian vector $X$, the squared norm $\|X\|_2^2$ is a Chi-squared random variable with $n$ degrees of freedom. Therefore, by~\cite[Lemma 1]{Laurent2000} we have
        \begin{equation*}
            \mathbb{P}(\| X \|_2^2 \ge n + 2\sqrt{nt} + 2t) \le \exp(-t)
        \end{equation*} for every $t > 0$. For $t = \log\frac{2N}{\delta}$, the \new{additional assumptions} of the theorem imply
        \begin{equation*}
            n + 2\sqrt{nt} + 2t \le n + 2\sqrt{n} \cdot \frac{n}{4} + 2\cdot\frac{n^2}{16} < n^2,
        \end{equation*}
        and therefore $\mathbb{P}(\|X\|_2^2 \ge n^2) \le \frac{\delta}{2N}$.         By the union bound, it holds that
        \begin{equation}\label{eq:controlnorm}
            \mathbb{P}\left ( \text{exists } i \in \{1,\ldots,N\} \text{ s.t. }\| X^{(i)} \|_2^2 \ge n^2 \right ) \le \frac{\delta}{2}.
        \end{equation}
        Corollary~\ref{cor:numLanczosSteps}, together with condition (ii) and~\eqref{eq:controlnorm}
        imply that
        $
            | \estG_{N,m} - \traceG_N(\log(A)) | \le \frac{\varepsilon}{2}
        $
        holds with probability at least $1 - \delta/ 2$, where we also used that $ \log\left ( \frac{ \sqrt{\kappa(A)+1}+ 1} {\sqrt{\kappa(A)+1} -1}\right ) \ge \frac{2}{\sqrt{\kappa(A)+1}}$.
        
    Applying Theorem~\ref{thm:gauss} to the matrix $\log(A)$, for which \new{$\| \log(A) \|_F^2 = \rho_{\log}\| \log(A)\|_2^2$}, we find that
    $
        |\traceG_N(\log(A)) - \log\det(A)| \le \frac{\varepsilon}{2}
    $
    holds with probability at least $1 - \delta / 2$. The proof is concluded by applying the triangle inequality. 
    \end{proof}

    \subsection{Rademacher random vectors}
    
    \begin{theorem}\label{thm:finalR}
             Suppose that the following holds for $N$ (number of Rademacher probe vectors) and $m$ (number of Lanczos steps per probe vector): 
        \begin{itemize}
        \item[(i)] $N \ge 32 \varepsilon^{-2} \left ( \rho_{\mathrm{logd}} \| \log(A) - \D_{\log(A)} \|_2^2 + \frac{\varepsilon}{2} \| \log(A) - \D_{\log(A)} \|_2 \right ) \log \frac{2}{\delta}$, where $\rho_{\mathrm{logd}}$ denotes the stable rank of 
        $\log(A) - \D_{\log(A)}$ and $\D_{\log(A)}$ is the diagonal matrix containing the diagonal entries of $\log(A)$;
         \item[(ii)] $m \ge \frac{\sqrt{\kappa(A)+1}}{4} \log \big( \new{4 \varepsilon^{-1} n(\sqrt{\kappa(A) + 1} + 1)\log(2\kappa(A)) }\big)$.
        \end{itemize}
        Then $\mathbb{P}(|\estR_{N,m} - \log\det(A)| \ge \varepsilon) \le \delta$.
    \end{theorem}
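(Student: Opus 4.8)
The plan is to follow the structure of the proof of Theorem~\ref{thm:finalG}, while exploiting the decisive simplification that every Rademacher vector has the \emph{exact} squared norm $\|X^{(i)}\|_2^2 = n$. This eliminates the probabilistic norm-control step~\eqref{eq:controlnorm} and the side condition $N \le \frac{\delta}{2}\exp(n^2/16)$ required in the Gaussian case, so that the whole failure budget $\delta$ can be spent on the trace-estimation step rather than split in two. It is also the reason why condition~(ii) features $n$ instead of the $n^2$ appearing in Theorem~\ref{thm:finalG}, where only the weaker bound $\|X^{(i)}\|_2^2 \le n^2$ was available.

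First I would decompose the error by the triangle inequality,
\begin{equation*}
|\estR_{N,m} - \log\det(A)| \le |\estR_{N,m} - \traceR_N(\log(A))| + |\traceR_N(\log(A)) - \log\det(A)|,
\end{equation*}
and bound the two terms separately. For the quadrature term, I would apply Corollary~\ref{cor:numLanczosSteps} with $f \equiv \log$ to each sample, bounding the per-sample error by $c_A\|X^{(i)}\|_2^2 r^{2m}$ with $r := (\sqrt{\kappa(A)+1}-1)/(\sqrt{\kappa(A)+1}+1)$ and $c_A = 2(\sqrt{\kappa(A)+1}+1)\log(2\kappa(A))$. Since $\estR_{N,m} - \traceR_N(\log(A))$ is the average of these per-sample discrepancies and each $\|X^{(i)}\|_2^2 = n$, this term is bounded deterministically by $c_A\, n\, r^{2m}$. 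Using $\log(1/r) \ge 2/\sqrt{\kappa(A)+1}$ together with condition~(ii), a short computation gives $c_A\, n\, r^{2m} \le \varepsilon/2$.

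For the trace term, I would invoke Corollary~\ref{cor:rademacher} applied to $B = \log(A)$ at target accuracy $\varepsilon/2$. Writing $\|\log(A) - \D_{\log(A)}\|_F^2 = \rho_{\mathrm{logd}}\|\log(A) - \D_{\log(A)}\|_2^2$, one checks that condition~(i) is exactly the sample count for which Corollary~\ref{cor:rademacher} guarantees $|\traceR_N(\log(A)) - \log\det(A)| \le \varepsilon/2$ with probability at least $1-\delta$; the factor $32$ arises from the constant $8$ of that corollary after replacing $\varepsilon$ by $\varepsilon/2$. Combining the two bounds yields the theorem. I anticipate no genuine obstacle: the only delicate points are the constant bookkeeping (the halving of $\varepsilon$ and the resulting factor of $4$) and the observation that, because the norm bound is exact rather than probabilistic, the deterministic quadrature error costs no probability and the entire $\delta$ remains available for the stochastic term.
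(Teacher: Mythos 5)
Your proof is correct and takes essentially the same route as the paper's: a triangle-inequality split, the deterministic Lanczos error bound exploiting $\|X^{(i)}\|_2^2 = n$ exactly (so no probability budget is spent on norm control, in contrast to the Gaussian case), and Corollary~\ref{cor:rademacher} applied to $\log(A)$ at accuracy $\varepsilon/2$, which yields precisely condition (i) with the factor $32$. The only cosmetic difference is that you cite Corollary~\ref{cor:numLanczosSteps} directly where the paper invokes Corollary~\ref{cor:convgeneral}, but the underlying estimate and the constant $c_A = 2(\sqrt{\kappa(A)+1}+1)\log(2\kappa(A))$ are identical.
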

    
    \begin{proof}
    Using Corollary~\ref{cor:convgeneral} and the fact that Rademacher random vectors have norm $\sqrt{n}$, the bound
$
  \big| \estR_{N,m} - \traceR_N(\log(A)) \big| \le \frac{\varepsilon}{2}
$
holds if
\[
m \ge \frac{1}{2} \log\big( \new{4 \varepsilon^{-1} n(\sqrt{\kappa(A)+1}+1) \log(2\kappa(A)) } \big) \big / \log\left ( \frac{ \sqrt{\kappa(A)+1}+ 1} {\sqrt{\kappa(A)+1} -1}\right ).
\]
Because of $\log\Big( \frac{ \sqrt{\kappa(A)+1}+ 1} {\sqrt{\kappa(A)+1} -1}\Big) \ge \frac{2}{\sqrt{\kappa(A)+1}}$, condition (ii) ensures that this inequality is satisfied.

Applying Corollary~\ref{cor:rademacher} to $\log(A)$ and with $\varepsilon$ replaced by $\varepsilon/2$, immediately shows 
    \begin{equation}\label{eq:wanted}
        |\traceR_N(\log(A)) - \log\det(A)| \le \frac{\varepsilon}{2}
    \end{equation}
with probability at least $1 - \delta$ if condition (i) is satisfied.
The proof is concluded by applying the triangle inequality. 
    \end{proof}
    
    \begin{paragraph}{Comparison with existing result.}
    To compare Theorem~\ref{thm:finalR} with an existing result from~\cite{Ubaru2017}, it is helpful to first derive a simpler (but usually stronger) condition on $N$.
    \begin{lemma} \label{lemma:simplerN} The statement of Theorem~\ref{thm:finalR} holds with condition (i) replaced by $$N \ge 8 \varepsilon^{-2} \big( n \log^2 \kappa(A) + 2\varepsilon \log \kappa(A) \big) \log \frac{2}{\delta}.$$
    \end{lemma}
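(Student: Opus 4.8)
The plan is to show that the simpler condition stated in the lemma \emph{implies} condition (i) of Theorem~\ref{thm:finalR}; once this implication is in hand, the conclusion is immediate, since Theorem~\ref{thm:finalR} already delivers the bound whenever (i) holds. To set things up, I would write $C := \log(A) - \D_{\log(A)}$ and use $\rho_{\mathrm{logd}} \|C\|_2^2 = \|C\|_F^2$ to rewrite condition (i) as $N \ge 32\varepsilon^{-2}\big(\|C\|_F^2 + \tfrac{\varepsilon}{2}\|C\|_2\big)\log\tfrac{2}{\delta}$. Since the proposed condition equals $\varepsilon^{-2}\big(8n\log^2\kappa(A) + 16\varepsilon\log\kappa(A)\big)\log\tfrac{2}{\delta}$, it therefore suffices to establish the two norm estimates $\|C\|_F^2 \le \tfrac{n}{4}\log^2\kappa(A)$ and $\|C\|_2 \le \log\kappa(A)$, because these give $32\|C\|_F^2 \le 8n\log^2\kappa(A)$ and $16\varepsilon\|C\|_2 \le 16\varepsilon\log\kappa(A)$.

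The crucial observation, and the one non-routine point, is that $C$ is invariant under shifts of $\log(A)$ by scalar multiples of the identity: for any $c \in \R$ the off-diagonal part of $\log(A) - cI$ agrees with that of $\log(A)$, since the shift only alters diagonal entries. I would exploit this by replacing $\log(A)$ with the centered matrix $\log(A) - cI = \log(e^{-c}A)$, choosing $c := \tfrac{1}{2}(\log\lambda_{\min} + \log\lambda_{\max})$ to be the midpoint of the spectral interval $[\log\lambda_{\min}, \log\lambda_{\max}]$ of $\log(A)$. As the width of this interval is $\log\lambda_{\max} - \log\lambda_{\min} = \log\kappa(A)$, centering yields $\|\log(A) - cI\|_2 \le \tfrac{1}{2}\log\kappa(A)$. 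Without this centering the quantity $\|\log(A)\|_2$ could be arbitrarily large even for a well-conditioned $A$, so no bound expressed purely through $\kappa(A)$ would be possible.

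The two estimates then follow from the elementary inequalities for the off-diagonal part recorded just after Corollary~\ref{cor:rademacher}. For the spectral norm, $\|C\|_2 = \|(\log(A)-cI) - \D_{\log(A)-cI}\|_2 \le 2\|\log(A)-cI\|_2 \le \log\kappa(A)$, and for the Frobenius norm, $\|C\|_F \le \|\log(A)-cI\|_F \le \sqrt{n}\,\|\log(A)-cI\|_2 \le \tfrac{\sqrt{n}}{2}\log\kappa(A)$, whence $\|C\|_F^2 \le \tfrac{n}{4}\log^2\kappa(A)$. Substituting these into the rewritten condition (i) shows that the proposed bound on $N$ is at least as large as the one required by (i), so the new condition is indeed stronger, and the proof is complete.
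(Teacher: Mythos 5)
Your proposal is correct and takes essentially the same route as the paper: the paper rescales $A$ to $B := \lambda A$ with $\lambda = 1/\sqrt{\lambda_{\min}(A)\lambda_{\max}(A)}$, which is exactly your additive centering $\log(A) - cI = \log(e^{-c}A)$ with $c$ the midpoint of the spectral interval of $\log(A)$, and it uses the same inequalities $\|M - \D_M\|_2 \le 2\|M\|_2$, $\|M - \D_M\|_F \le \|M\|_F \le \sqrt{n}\,\|M\|_2$ to arrive at the same constants. The only packaging difference is that the paper re-applies Corollary~\ref{cor:rademacher} to $\log(B)$ and invokes invariance of the trace-estimation error under rescaling, whereas you exploit the shift-invariance of the off-diagonal part $C$ to verify condition (i) of Theorem~\ref{thm:finalR} directly and then use that theorem as a black box; both are valid.
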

    \begin{proof}
    We set $B := \lambda A$ with 
    $\lambda := 1/\sqrt{\lambda_{\min}(A) \lambda_{\max}(A)}$ and note that  $$\traceR_N(\log(A)) - \log\det(A) = \traceR_N(\log(\lambda A)) - \log\det(\lambda A).$$ 
    Using $\lambda_{\max}(B) = \sqrt{\kappa(A)}$, $\lambda_{\min}(B) = 1/\sqrt{\kappa(A)}$, and $\kappa(B) = \kappa(A)$, we obtain
    \begin{align*}
        \| \log(B) - \D_{\log(B)} \|_2 & \le 2 \| \log(B) \|_2 = \log \kappa(A);\\
        \| \log(B) - \D_{\log(\new{B})} \|_F^2 & \le \| \log(B) \|_F^2 =  \rho(\log(B)) \frac{ \log^2\kappa(A)}{4} \le \frac{n}{4} \log^2\kappa(A).
    \end{align*}
    An application of Corollary~\ref{cor:rademacher} to $\log(B)$ therefore yields~\eqref{eq:wanted} with probability at least $1-\delta$ for $N \ge 8 \varepsilon^{-2} \big( n \log^2 \kappa(A) + 2 \varepsilon \log \kappa(A) \big) \log\frac{2}{\delta}$.
\end{proof}
    
    Correcting for the two minor erratas explained above, the result from~\cite[Corollary 4.5]{Ubaru2017} states that
    $\mathbb{P}(|\estR_{N,m} - \trace(\log(A))| \ge \varepsilon) \le \delta$
    holds if 
    \begin{equation} \label{eq:condN}
    N \ge 24 \varepsilon^{-2} n^2 \left ( \log(1 + \kappa(A)) \right )^2 \log \frac{2}{\delta}
    \end{equation}
    and
    \begin{equation} \label{eq:condm}
    m \ge \frac{\sqrt{3\kappa(A)}}{4} \log \big( 20 \varepsilon^{-1}  n \big(  \sqrt{2\kappa(A)+1} + 1\big)\log(2\kappa(A)+2) \big).
    \end{equation}
    \new{Compared to~\eqref{eq:condN}, Lemma~\ref{lemma:simplerN} reduces the explicit dependence on the matrix size from $n^2$ to $n$. The dependence of the bounds on $\kappa(A)$ is comparable.}
    \new{Let us stress that even a dependence on $n$ does not compare favorably to simply computing the diagonal elements, but the bound from condition (i) of Theorem~\ref{thm:finalR} can often be expected to be significantly better than the simplified bound of Lemma~\ref{lemma:simplerN}. Below we describe a situation in which the former only depends logarithmically on $n$.}
Condition (ii) of Theorem~\ref{thm:finalR} improves~\eqref{eq:condm} clearly but less drastically, roughly by a factor $\sqrt{3}$.
    
    \end{paragraph}
    
    \begin{paragraph}{\new{Implications of low stable rank.}}
    Let us consider a family of matrices $\{ A_n \}$ of increasing dimension, a fixed failure probability $\delta$, and a fixed accuracy $\varepsilon$; the number of probe vectors required to get $\mathbb{P} (|\traceGR_N(\log(A_n)) - \trace(\log(A_n))| \ge \varepsilon) \le \delta$ is proportional to $O(\rho_{n} \|\log(A_n)\|_2^2)$, where $\rho_n$ is the stable rank of $\log(A_n)$. In certain applications, including regularized kernel matrices (see, e.g.,~\cite{Chen2013,Gardner2018}), the stable rank grows slowly when the matrix size increases. For such situations, our bounds lead to favorable implications. To illustrate this, let us consider matrices $A_n := I + B_n$, where the eigenvalues satisfy $\lambda_i(B_n) \le nC\alpha^i$ for some constants $C>0$ and $0 < \alpha < 1$, for all $i\le n$, such as in the discretization of a radial basis function kernel on a fixed domain~\cite{Gardner2018}. In this case, $\rho_n = O(\log n)$. As a second example, if $B_n$ comes from a discretization of a Mat\'ern kernel on a regular grid in a fixed domain, its eigenvalues satisfy $\lambda_i(B_n) \le nCi^{-\beta}$ for some constants $C>0$ and $\beta>1$, for all $i \le n$~\cite{Chen2013}; the stable rank of $\log(A_n) = \log(I + B_n)$ is bounded by $\rho_{n} = O(n^{1/\beta})$.
    To apply Theorems~\ref{thm:finalG} and \ref{thm:finalR} one also needs to take into account that, for both our examples, $\|\log(A_n)\|_2$ and $\kappa(A_n)$ grow proportionally to $\log(n)$ and $n$, respectively. Finally, note that in practice one would consider $A_n = \sigma I + B_n$ with the regularization parameter $\sigma$ chosen adaptively; see, e.g.,~\cite{Caponnetto2007}.
    \end{paragraph}
    
    \new{
    \section{Numerical experiments}
    
    In this section, we report on a number of numerical experiments illustrating the bounds obtained in this work.
     All numerical experiments have been performed in {\sc Matlab}, version 9.9 (R2020b).
    
    \begin{example} \label{example:triangles}
         To compare the estimates from Theorem~\ref{thm:gauss} and Corollary~\ref{cor:rademacher} with the convergence of randomized trace estimation using Gaussian and Rademacher vectors, we use an example from~\cite{Avron2010, meyer2021hutch++}. The number of triangles in an undirected graph is equal to $\frac{1}{6} \trace(A^3)$ where $A$ is the (usually indefinite) adjacency matrix. Note that the quadratic forms $X^T A^3 X$ can be evaluated exactly using two matrix-vector multiplications.
    The results for an arXiv collaboration network with $n = 5\,242$ nodes and $48\,260$ triangles.\footnote{See \url{https://snap.stanford.edu/data/ca-GrQc.html}.}  
    
    We estimate $\trace(A^3)$ using $N = 2, 2^2, 2^3, \ldots, 2^{11}$ samples. For each value of $N$ we performed $1\,000$ experiments and discarded the 5\% worst approximations in order to estimate an error bound that holds with probability $95\%$.  The obtained results are represented by the shaded regions in Figure~\ref{fig:tri1} and match the obtained bounds fairly well, especially for Gaussian vectors.
    
    Figure~\ref{fig:tri2} shows the empirical failure probability $\mathbb{P}(|\traceGR_N(A^3) - \trace(A^3)| \ge \varepsilon)$ with $\varepsilon = \frac{1}{10} \trace(A^3)$ using $1\,000$ experiments for $N = 2, 2^2, 2^3, \ldots, 2^{11}$ (blue and red lines). The vertical purple and yellow lines are the estimated number of samples needed to achieve failure probability $\delta = 0.05$ from Theorem~\ref{thm:gauss} and Corollary~\ref{cor:rademacher}, respectively. 
\end{example}

    \begin{figure}
        \centering
        \includegraphics{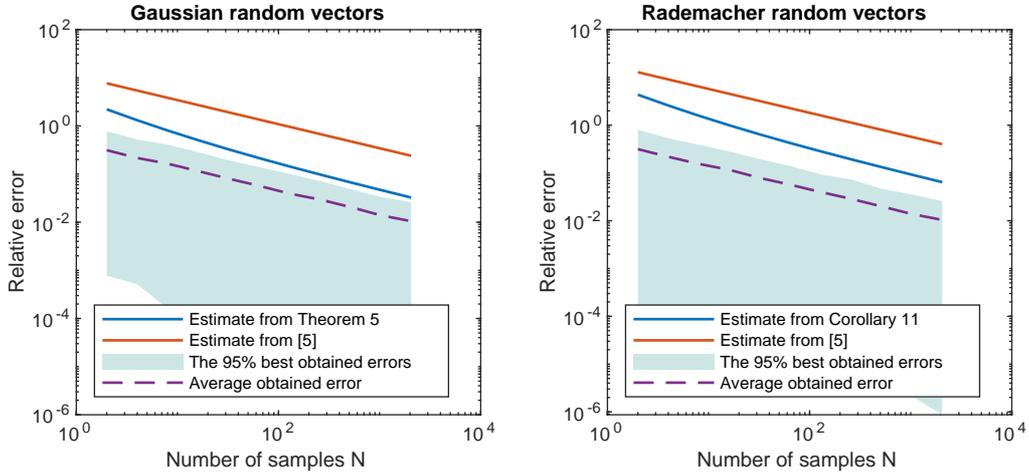}
        \caption{\new{Estimation of $\trace(A^3)$ with Gaussian and Rademacher vectors for the matrix from Example~\ref{example:triangles}. 
        Error bounds from Theorem~\ref{thm:gauss}, Corollary~\ref{cor:rademacher}, and~\cite{Avron2010} for failure probability $\delta = 0.05$ compared with the observed error.      
        }}
        \label{fig:tri1}
    \end{figure}
    
    \begin{figure}
        \centering
        \includegraphics{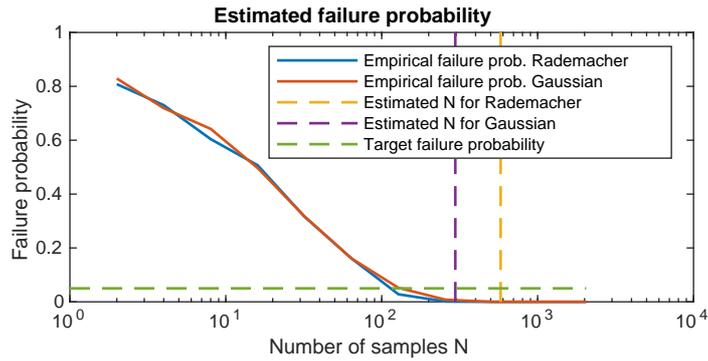}
        \caption{\new{Number of samples needed to attain error $\varepsilon = \frac{1}{10} \trace(A^3)$ with failure probability $5\%$ for Example~\ref{example:triangles}. Empirical failure probability vs. bounds from Theorem~\ref{thm:gauss} and Corollary~\ref{cor:rademacher}.}}
        \label{fig:tri2}
    \end{figure}
    
    \begin{table}
 \centering\new{
 \begin{tabular}{| c | c | c | c | c | c | c |}
  \hline
  Name & Size & Ref. & $\log\det(A)$ & $\kappa(A)$ & $\|\log(A)\|_F$ & $\|\log(A) - \D_{\log(A)}\|_F$ \\
  \hline
  \texttt{thermomec\_TC} & $102158$ & \cite{UFL} & $-5.47 \cdot 10^5$ & $67.2$ & $1.72 \cdot 10^3$ & $122.8$ \\
  \hline
  \texttt{lowrank} & 20000 & \cite{LiZhu2020} & $89.4$ & $1560$ & $17.04$ & $16.99$ \\
  \hline 
  \texttt{precip} & $6400$ & \cite{meyer2021hutch++} & $-2.56 \cdot 10^4$ & $6738$ & $357$ & $157$ \\
  \hline
 \end{tabular}
 \caption{Summary of the matrices used for log determinant experiments.}
 \label{table:summary}}
\end{table}

\begin{example}
To compare the results of Theorems~\ref{thm:finalG} and~\ref{thm:finalR} with the number of sample vectors $N$ and Lanczos steps $m$ (per sample) required to reach a fixed accuracy, we consider the matrices listed in Table~\ref{table:summary}. The matrix \texttt{thermomec\_TC} is contained in the University of Florida sparse matrix collection~\cite{UFL} and has been considered, for instance, in~\cite{Boutsidis2017,Fitzsimons2017,Ubaru2017}. The matrix \texttt{lowrank} is defined in~\cite{Saibaba2017,LiZhu2020} as
    \begin{equation*}
A = \sum_{j=1}^{40} \frac{10}{j^2} x_j x_j^T + \sum_{j=41}^{300} \frac{1}{j^2} x_j x_j^T,
\end{equation*}
where each $x_j$ is a sparse vector of length $20\,000$ with approximately $2.5\%$ uniformly distributed nonzero entries, generated with the {\sc Matlab} command \texttt{sprand}.
The matrix \texttt{precip} is a two-dimensional Gaussian kernel matrix with length parameter $\gamma = 64$ and regularization parameter $\lambda = 0.008$ taken from~\cite{meyer2021hutch++}, involving precipitation data from Slovakia~\cite{Neteler2013}.  As the matrices \texttt{thermomec\_TC} and \texttt{lowrank} are too large for $\log(A)$ to be computed explicitly, the quantities $\| \log(A)\|_F$ and $\|\log(A) - \D_{\log(A)}\|_F$ are approximated by randomized trace estimation combined with the Lanczos method to estimate the diagonal elements of $\log(A)$.

For quadratic forms involving the logarithm, there is a relatively inexpensive way to obtain an upper bound on the error of the Lanczos method. As discussed in~\cite{Bai1996}, Gauss quadrature always yields an upper bound for $x^T \log(A)x$, while Gauss-Lobatto quadrature always yields a lower bound. We fix $\delta = 0.1$ and for several values of $\varepsilon$ we investigate how many samples and Lanczos iterations are needed in practice. When approximating quadratic forms while aiming at accuracy $\varepsilon$, we stop the Lanczos method when the difference between upper and lower bound is less than $\varepsilon/2$. 
Starting from $N = 1$, we compute the empirical failure probability  $\mathbb{P}(|\mathrm{est}_{N,m} - \log \det(A)| \ge \varepsilon)$; if this probability is larger than $\delta$, we double the number of samples $N$ and repeat.

The results for the three matrices from Table~\ref{table:summary} are reported in Figures~\ref{fig:thermo},~\ref{fig:lowrank}, and~\ref{fig:precip}. The left plots show, for the considered values of $\varepsilon$ (which have been normalized by dividing them by the true $|\log \det(A)|$), the number of samples required to attain $90\%$ success probability over $30$ runs of the algorithm, versus the number of samples given by Theorems ~\ref{thm:finalG} and~\ref{thm:finalR}. The plots on the right show, for the same (normalized) values of $\varepsilon$, the average number of Lanczos steps required to reach accuracy $\varepsilon/2$
versus the number of Lanczos steps predicted by Theorems~\ref{thm:finalG} and~\ref{thm:finalR}.

For \texttt{thermomec\_TC}, the diagonal of $\log(A)$ is large relative to the rest of the matrix: $\|\log(A)- \D_{\log(A)}\|_F/\| \log(A)\|_F \approx 0.07$. Therefore, our bounds predict that Rademacher vectors perform much better than Gaussian vectors; this is indeed confirmed by Figure~\ref{fig:thermo}. The matrix $A$ is well conditioned and, hence, the bounds correctly predict that the Lanczos method only needs relatively few iterations to attain good accuracy. 

For \texttt{lowrank}, Figure~\ref{fig:lowrank} shows that Rademacher and Gaussian vectors perform similarly. Although the condition number of $A$ is $\kappa(A) \approx 1560$, the eigenvalues have a strong decay, and hence its adaptivity lets the Lanczos method perform much better than predicted by our bounds, see, e.g.,~\cite{Guttel2020} for a discussion. 

For \texttt{precip}, the ratio $\|\log(A)- \D_{\log(A)}\|_F/\| \log(A)\|_F \approx 0.44$ is reflected in Figure~\ref{fig:precip}, showing that Rademacher vectors attain somewhat better accuracy. The condition number of $A$ is high and there is no strong decay or gaps in the singular values; a relatively large number of Lanczos steps is necessary to obtain the desired accuracy when approximating the quadratic forms.
\end{example}

\begin{figure}
    \centering
    \includegraphics[scale=.95]{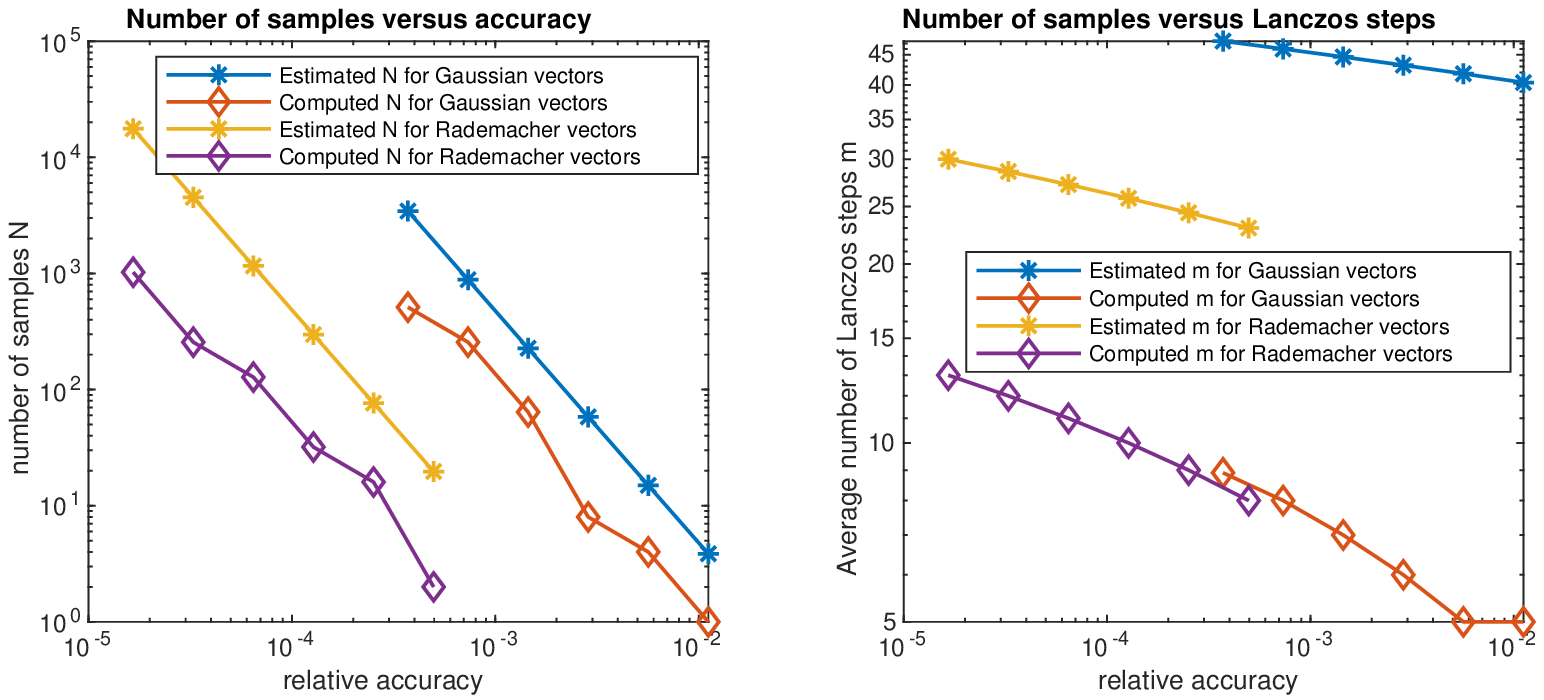}
    \caption{\new{Results for matrix \texttt{thermomec\_TC} from~\cite{UFL}. }}
    \label{fig:thermo}
\end{figure}

\begin{figure}
    \centering
    \includegraphics[scale=.95]{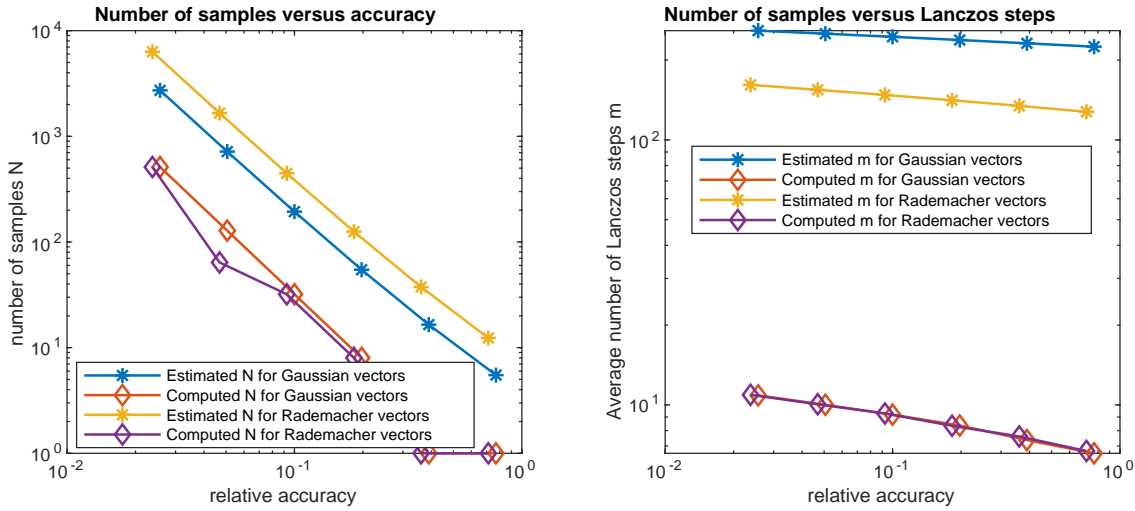}
    \caption{\new{Results for matrix \texttt{lowrank} from~\cite{LiZhu2020}.}}
    \label{fig:lowrank}
\end{figure}

\begin{figure}
    \centering
    \includegraphics[scale=.95]{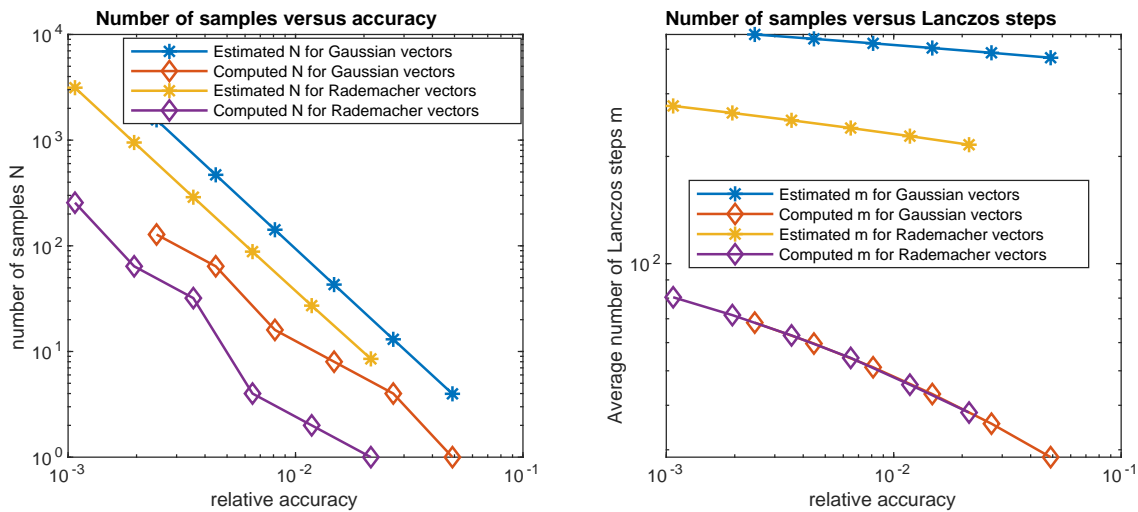}
    \caption{\new{Results for matrix \texttt{precip} from~\cite{meyer2021hutch++}.}}
    \label{fig:precip}
\end{figure}}

\begin{paragraph}{Acknowledgments.}
We thank Rados{\l}aw Adamczak, Rasmus Kyng, and Shashanka Ubaru for helpful discussions on topics related to this work. \new{We also thank the referees for providing valuable feedback.}
\end{paragraph}

	\bibliographystyle{abbrv}
	\bibliography{Bib} 

\begin{thebibliography}{10}

\bibitem{diffChiSquared}
Distribution of difference of chi-squared variables.
\newblock
  \url{https://math.stackexchange.com/questions/85249/distribution-of-difference-of-chi-squared-variables}.
\newblock Accessed: 06/03/2020.

\bibitem{centralBinCoeff}
Upper limit on the central binomial coefficient.
\newblock
  \url{https://mathoverflow.net/questions/133732/upper-limit-on-the-central-binomial-coefficient}.
\newblock Accessed: 23/03/2020.

\bibitem{Adamczak2003}
R.~Adamczak.
\newblock The entropy method and concentration of measure in product spaces.
\newblock Master's thesis, University of Warsaw and Vrije Universiteit van
  Amsterdam, 2003.
\newblock Available
  at~\url{http://duch.mimuw.edu.pl/~radamcz/Old/Papers/master.pdf}.

\bibitem{Affandi2014}
R.~H. Affandi, E.~Fox, R.~Adams, and B.~Taskar.
\newblock Learning the parameters of determinantal point process kernels.
\newblock In {\em International Conference on Machine Learning}, pages
  1224--1232, 2014.

\bibitem{Avron2010}
H.~Avron.
\newblock Counting triangles in large graphs using randomized matrix trace
  estimation.
\newblock In {\em Workshop on Large-scale Data Mining: Theory and
  Applications}, volume~10, pages 10--9, 2010.

\bibitem{AvronToledo2011}
H.~Avron and S.~Toledo.
\newblock Randomized algorithms for estimating the trace of an implicit
  symmetric positive semi-definite matrix.
\newblock {\em J. ACM}, 58(2):Art. 8, 17, 2011.

\bibitem{Bai1996}
Z.~Bai, M.~Fahey, and G.~Golub.
\newblock Some large-scale matrix computation problems.
\newblock {\em J. Comput. Appl. Math.}, 74(1-2):71--89, 1996.

\bibitem{Barry1999}
R.~P. Barry and R.~K. Pace.
\newblock Monte {C}arlo estimates of the log determinant of large sparse
  matrices.
\newblock {\em Linear Algebra Appl.}, 289(1-3):41--54, 1999.

\bibitem{Bhatia1989}
R.~Bhatia, M.~D. Choi, and C.~Davis.
\newblock Comparing a matrix to its off-diagonal part.
\newblock In {\em The {G}ohberg anniversary collection, {V}ol. {I} ({C}algary,
  {AB}, 1988)}, volume~40 of {\em Oper. Theory Adv. Appl.}, pages 151--164.
  Birkh\"{a}user, Basel, 1989.

\bibitem{Boucheron2003}
S.~Boucheron, G.~Lugosi, and P.~Massart.
\newblock Concentration inequalities using the entropy method.
\newblock {\em Ann. Probab.}, 31(3):1583--1614, 2003.

\bibitem{Boucheron2013}
S.~Boucheron, G.~Lugosi, and P.~Massart.
\newblock {\em Concentration inequalities}.
\newblock Oxford University Press, Oxford, 2013.
\newblock A nonasymptotic theory of independence, With a foreword by Michel
  Ledoux.

\bibitem{Boutsidis2017}
C.~Boutsidis, P.~Drineas, P.~Kambadur, E.-M. Kontopoulou, and A.~Zouzias.
\newblock A randomized algorithm for approximating the log determinant of a
  symmetric positive definite matrix.
\newblock {\em Linear Algebra Appl.}, 533:95--117, 2017.

\bibitem{Caponnetto2007}
A.~Caponnetto and E.~De~Vito.
\newblock Optimal rates for the regularized least-squares algorithm.
\newblock {\em Found. Comput. Math.}, 7(3):331--368, 2007.

\bibitem{Chen2013}
J.~Chen.
\newblock On the use of discrete {L}aplace operator for preconditioning kernel
  matrices.
\newblock {\em SIAM J. Sci. Comput.}, 35(2):A577--A602, 2013.

\bibitem{Chen2016}
J.~Chen.
\newblock How accurately should {I} compute implicit matrix-vector products
  when applying the {H}utchinson trace estimator?
\newblock {\em SIAM J. Sci. Comput.}, 38(6):A3515--A3539, 2016.

\bibitem{UFL}
T.~A. Davis and Y.~Hu.
\newblock The {U}niversity of {F}lorida sparse matrix collection.
\newblock {\em ACM Trans. Math. Software}, 38(1):Art. 1, 25, 2011.

\bibitem{Durfee2020}
D.~Durfee, J.~Peebles, R.~Peng, and A.~B. Rao.
\newblock Determinant-preserving sparsification of {SDDM} matrices.
\newblock {\em SIAM J. Comput.}, 49(4):350--408, 2020.

\bibitem{Eden2017}
T.~Eden, A.~Levi, D.~Ron, and C.~Seshadhri.
\newblock Approximately counting triangles in sublinear time.
\newblock {\em SIAM J. Comput.}, 46(5):1603--1646, 2017.

\bibitem{Fitzsimons2017}
J.~Fitzsimons, D.~Granziol, K.~Cutajar, M.~Osborne, M.~Filippone, and
  S.~Roberts.
\newblock Entropic trace estimates for log determinants.
\newblock In {\em Joint European Conference on Machine Learning and Knowledge
  Discovery in Databases}, pages 323--338. Springer, 2017.

\bibitem{FoucartRauhut2013}
S.~Foucart and H.~Rauhut.
\newblock {\em A mathematical introduction to compressive sensing}.
\newblock Applied and Numerical Harmonic Analysis. Birkh\"{a}user/Springer, New
  York, 2013.

\bibitem{Gardner2018}
J.~R. Gardner, G.~Pleiss, D.~Bindel, K.~Q. Weinberger, and A.~G. Wilson.
\newblock {GPyTorch: Blackbox matrix-matrix Gaussian process inference with GPU
  acceleration}.
\newblock In {\em Advances in Neural Information Processing Systems}, volume
  2018-December, pages 7576--7586, 2018.

\bibitem{GolubMeurant2010}
G.~H. Golub and G.~Meurant.
\newblock {\em Matrices, moments and quadrature with applications}.
\newblock Princeton Series in Applied Mathematics. Princeton University Press,
  Princeton, NJ, 2010.

\bibitem{Gratton2018}
S.~Gratton and D.~Titley-Peloquin.
\newblock Improved bounds for small-sample estimation.
\newblock {\em SIAM J. Matrix Anal. Appl.}, 39(2):922--931, 2018.

\bibitem{Guttel2020}
S.~G\"{u}ttel, D.~Kressner, and K.~Lund.
\newblock Limited-memory polynomial methods for large-scale matrix functions.
\newblock {\em GAMM-Mitt.}, 43(3):e202000019, 19, 2020.

\bibitem{Han2017}
I.~Han, D.~Malioutov, H.~Avron, and J.~Shin.
\newblock Approximating spectral sums of large-scale matrices using stochastic
  {C}hebyshev approximations.
\newblock {\em SIAM J. Sci. Comput.}, 39(4):A1558--A1585, 2017.

\bibitem{HansonWright1971}
D.~L. Hanson and F.~T. Wright.
\newblock A bound on tail probabilities for quadratic forms in independent
  random variables.
\newblock {\em Ann. Math. Statist.}, 42:1079--1083, 1971.

\bibitem{Hunter2014}
T.~Hunter, A.~E. Alaoui, and A.~M. Bayen.
\newblock Computing the log-determinant of symmetric, diagonally dominant
  matrices in near-linear time.
\newblock {\em CoRR}, abs/1408.1693, 2014.

\bibitem{Hutchinson1989}
M.~F. Hutchinson.
\newblock A stochastic estimator of the trace of the influence matrix for
  {L}aplacian smoothing splines.
\newblock {\em Comm. Statist. Simulation Comput.}, 18(3):1059--1076, 1989.

\bibitem{Krahmer2011}
F.~Krahmer and R.~Ward.
\newblock New and improved {J}ohnson-{L}indenstrauss embeddings via the
  restricted isometry property.
\newblock {\em SIAM J. Math. Anal.}, 43(3):1269--1281, 2011.

\bibitem{Laurent2000}
B.~Laurent and P.~Massart.
\newblock Adaptive estimation of a quadratic functional by model selection.
\newblock {\em Ann. Statist.}, 28(5):1302--1338, 2000.

\bibitem{LiZhu2020}
H.~Li and Y.~Zhu.
\newblock Randomized block {K}rylov space methods for trace and log-determinant
  estimators.
\newblock {\em arXiv preprint arXiv:2003.00212}, 2020.

\bibitem{Lin2016}
L.~Lin, Y.~Saad, and C.~Yang.
\newblock Approximating spectral densities of large matrices.
\newblock {\em SIAM Rev.}, 58(1):34--65, 2016.

\bibitem{meyer2021hutch++}
R.~A. Meyer, C.~Musco, C.~Musco, and D.~P. Woodruff.
\newblock Hutch++: Optimal stochastic trace estimation.
\newblock In {\em Symposium on Simplicity in Algorithms (SOSA)}, pages
  142--155. SIAM, 2021.

\bibitem{Neteler2013}
M.~Neteler and H.~Mitasova.
\newblock {\em Open source GIS: a GRASS GIS approach}, volume 689.
\newblock Springer Science \& Business Media, 2013.

\bibitem{Pace2004}
R.~K. Pace and J.~P. LeSage.
\newblock Chebyshev approximation of log-determinants of spatial weight
  matrices.
\newblock {\em Comput. Statist. Data Anal.}, 45(2):179--196, 2004.

\bibitem{PengWang2018}
W.~Peng and H.~Wang.
\newblock A general scheme for log-determinant computation of matrices via
  stochastic polynomial approximation.
\newblock {\em Comput. Math. Appl.}, 75(4):1259--1271, 2018.

\bibitem{Ascher2015}
F.~Roosta-Khorasani and U.~Ascher.
\newblock Improved bounds on sample size for implicit matrix trace estimators.
\newblock {\em Found. Comput. Math.}, 15(5):1187--1212, 2015.

\bibitem{Saibaba2017}
A.~K. Saibaba, A.~Alexanderian, and I.~C.~F. Ipsen.
\newblock Randomized matrix-free trace and log-determinant estimators.
\newblock {\em Numer. Math.}, 137(2):353--395, 2017.

\bibitem{Talagrand1996}
M.~Talagrand.
\newblock New concentration inequalities in product spaces.
\newblock {\em Invent. Math.}, 126(3):505--563, 1996.

\bibitem{Thron1998}
C.~Thron, S.~J. Dong, K.~F. Liu, and H.~P. Ying.
\newblock Pad{\'e}-{$Z_2$} estimator of determinants.
\newblock {\em Physical Review D - Particles, Fields, Gravitation and
  Cosmology}, 57(3):1642--1653, 1998.

\bibitem{Ubaru2017}
S.~Ubaru, J.~Chen, and Y.~Saad.
\newblock Fast estimation of {$\text{\tt tr}(f(A))$} via stochastic {L}anczos
  quadrature.
\newblock {\em SIAM J. Matrix Anal. Appl.}, 38(4):1075--1099, 2017.

\bibitem{Wainwright2019}
M.~J. Wainwright.
\newblock {\em High-dimensional statistics}, volume~48 of {\em Cambridge Series
  in Statistical and Probabilistic Mathematics}.
\newblock Cambridge University Press, Cambridge, 2019.
\newblock A non-asymptotic viewpoint.

\bibitem{Wainwright2006}
M.~J. Wainwright and M.~I. Jordan.
\newblock {Log-determinant relaxation for approximate inference in discrete
  Markov random fields}.
\newblock {\em IEEE Trans. Signal Process.}, 54(6):2099--2109, 2006.

\bibitem{Wimmer2014}
K.~Wimmer, Y.~Wu, and P.~Zhang.
\newblock Optimal query complexity for estimating the trace of a matrix.
\newblock In {\em International Colloquium on Automata, Languages, and
  Programming}, pages 1051--1062. Springer, 2014.

\bibitem{Zhang2007}
Y.~Zhang and W.~E. Leithead.
\newblock Approximate implementation of the logarithm of the matrix determinant
  in {G}aussian process regression.
\newblock {\em J. Stat. Comput. Simul.}, 77(3-4):329--348, 2007.

\end{thebibliography}
	
	\appendix

	\section{Auxiliary results}

\subsection{Herbst argument and logarithmic Sobolev inequalities}

This section contains auxiliary results used in the proof of Theorem~\ref{thm:concRademacher}. We recall that the entropy of a random variable $Z$ is defined as
\begin{equation*}
\ent(Z) := \mathbb{E}[Z \log Z] - \mathbb{E}[Z] \log \mathbb{E}[Z],
\end{equation*}
provided that all the involved expected values exist.

The Herbst argument (see, e.g.,~\cite[page 11]{Boucheron2013},~\cite[pages 239--240]{FoucartRauhut2013}, and~\cite[Section 3.1.2]{Wainwright2019}) turns a bound on the entropy of a random variable into a bound on the moment generating function. By Chernoff's bound, the latter implies a bound on the tail of the random variable. Specifically, we use the following (modified) Herbst argument.
\begin{lemma}\label{lemma:herbst}
Let $Z$ be a random variable and $g:[0, a) \to\R$ such that
\begin{equation}\label{eq:ineqH}
    \mathbb{H}(\exp(\lambda Z)) \le \lambda^2 g(\lambda) \mathbb{E}[\exp(\lambda Z)].
\end{equation}
Then for all $\lambda \in [0, a)$ it holds
\begin{equation*}
    \log\mathbb{E}[\exp(\lambda Z)] \le \lambda \mathbb{E}[Z] + \lambda \int_0^{\lambda} g(\xi) \mathrm{d}\xi.
\end{equation*}
\end{lemma}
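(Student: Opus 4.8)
The plan is to run the classical Herbst argument: convert the entropy bound into a differential inequality for the normalized cumulant generating function and then integrate it. I would set $\psi(\lambda) := \log\mathbb{E}[\exp(\lambda Z)]$ and $F(\lambda) := \mathbb{E}[\exp(\lambda Z)]$, so that $\psi = \log F$. The first step is to rewrite the entropy in terms of $F$: since $F'(\lambda) = \mathbb{E}[Z\exp(\lambda Z)]$ and $\log\exp(\lambda Z) = \lambda Z$, one has
\[
\ent(\exp(\lambda Z)) = \mathbb{E}[\lambda Z \exp(\lambda Z)] - \mathbb{E}[\exp(\lambda Z)]\log\mathbb{E}[\exp(\lambda Z)] = \lambda F'(\lambda) - F(\lambda)\log F(\lambda).
\]

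The key observation is that this expression is precisely $\lambda^2 F(\lambda)$ times the derivative of $\psi(\lambda)/\lambda$. A direct computation gives
\[
\frac{d}{d\lambda}\Bigl(\frac{\psi(\lambda)}{\lambda}\Bigr) = \frac{\lambda\psi'(\lambda)-\psi(\lambda)}{\lambda^2} = \frac{\lambda F'(\lambda)-F(\lambda)\log F(\lambda)}{\lambda^2 F(\lambda)} = \frac{\ent(\exp(\lambda Z))}{\lambda^2 F(\lambda)},
\]
using $\psi' = F'/F$. Substituting the hypothesis $\ent(\exp(\lambda Z)) \le \lambda^2 g(\lambda) F(\lambda)$ then yields the clean differential inequality $(\psi(\lambda)/\lambda)' \le g(\lambda)$ on $(0,a)$.

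It remains to integrate from the left endpoint. Here I would use the boundary value $\psi(0)=0$ together with $\psi'(0) = F'(0)/F(0) = \mathbb{E}[Z]$, which gives $\psi(\lambda)/\lambda \to \mathbb{E}[Z]$ as $\lambda\downarrow 0$. Integrating the differential inequality produces
\[
\frac{\psi(\lambda)}{\lambda} - \mathbb{E}[Z] = \int_0^\lambda \frac{d}{d\xi}\Bigl(\frac{\psi(\xi)}{\xi}\Bigr)\,d\xi \le \int_0^\lambda g(\xi)\,d\xi,
\]
and multiplying through by $\lambda > 0$ gives the claimed bound $\psi(\lambda) \le \lambda\,\mathbb{E}[Z] + \lambda\int_0^\lambda g(\xi)\,d\xi$; the case $\lambda = 0$ is trivial since both sides vanish.

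The main technical obstacle is the differentiation under the expectation, namely justifying that $F$ is continuously differentiable with $F'(\lambda)=\mathbb{E}[Z\exp(\lambda Z)]$ and that the boundary limit $\lim_{\lambda\downarrow 0}\psi(\lambda)/\lambda = \mathbb{E}[Z]$ holds. Both follow from the implicit assumption that the relevant expectations are finite on $[0,a)$ (which is already needed for the entropy to be defined), via a standard dominated-convergence argument that permits differentiating inside the expectation. Everything else is elementary calculus, and the only departure from the textbook Herbst argument is the retention of the mean term $\lambda\,\mathbb{E}[Z]$, which the integration from $0$ handles automatically.
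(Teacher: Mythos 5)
Your proposal is correct and follows essentially the same route as the paper's proof: both rewrite the entropy hypothesis as the differential inequality $\frac{\mathrm{d}}{\mathrm{d}\lambda}\bigl(\psi(\lambda)/\lambda\bigr) \le g(\lambda)$ for $\psi(\lambda) = \log\mathbb{E}[\exp(\lambda Z)]$ and integrate from $0$ using $\lim_{\lambda\to 0^+}\psi(\lambda)/\lambda = \mathbb{E}[Z]$. Your explicit handling of the regularity of $F$ and the boundary limit is a welcome addition that the paper leaves implicit.
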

\begin{proof}
For $\psi(\lambda) := \log \mathbb{E}[\exp(\lambda Z)]$, it holds that $\psi'(\lambda) = \mathbb{E}[Z\exp(\lambda Z)] / \mathbb{E}[\exp(\lambda Z)]$. Recalling the definition of entropy, this allows us to rewrite~\eqref{eq:ineqH} as
\begin{equation*}
    \lambda \psi'(\lambda) \exp(\psi(\lambda)) - \psi(\lambda) \exp(\psi(\lambda)) \le \lambda^2 g(\lambda)\exp(\psi(\lambda)),
\end{equation*}
which is equivalent to
\begin{equation*}
    \frac{\mathrm{d}}{\mathrm{d}\lambda} \left ( \frac{\psi(\lambda)}{\lambda}\right ) \le f(\lambda).
\end{equation*}
Integration on the interval $[0, \lambda]$ gives
\begin{equation*}
    \frac{\psi(\lambda)}{\lambda} - \lim_{\lambda\to 0^+} \frac{\psi(\lambda)}{\lambda} \le \int_0^{\lambda} f(\xi)\mathrm{d}\xi.
\end{equation*}
We conclude by noting that $\lim_{\lambda\to 0^+} \frac{\psi}{\lambda} = \mathbb{E}[Z]$.
\end{proof}

For deriving bounds on the entropy, we need the following two variations of  Gross' logarithmic Sobolev inequality.

\begin{theorem}\label{thm:logsobolev}
Let $f:\{-1,1\}^n \to \R$ and let $X$ be a Rademacher vector with components $X_1, \ldots, X_n$. Define $f(\bar{X}^{(i)}) := f(X_1, \ldots, X_{i-1}, -X_i, X_{i+1}, \ldots, X_n)$ for $i=1, \ldots, n$. Then for all $\lambda > 0$ we have
\begin{equation}\label{eq:grossplus}
    \ent(\exp(\lambda f(X))) \le \frac{\lambda^2}{4} \mathbb{E} \left [ \exp(\lambda f(X)) \sum_{i=1}^n \left ( f(X) - f(\bar{X}^{(i)}) \right )_+^2 \right ]
\end{equation}
and
\begin{equation}\label{eq:grossnormal}
    \ent(\exp(\lambda f(X))) \le \frac{\lambda^2}{8} \mathbb{E} \left [ \exp(\lambda f(X)) \sum_{i=1}^n \left ( f(X) - f(\bar{X}^{(i)}) \right )^2 \right ].
\end{equation}
\end{theorem}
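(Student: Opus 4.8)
The plan is to derive both inequalities from the tensorization property of entropy together with a two-point (single-variable) logarithmic Sobolev inequality for the symmetric Bernoulli measure on $\{-1,1\}$. The overall strategy follows the standard architecture for discrete log-Sobolev inequalities: first reduce the $n$-dimensional entropy to a sum of conditional one-dimensional entropies, then bound each one-dimensional contribution by an explicit elementary inequality.

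First I would invoke the \emph{subadditivity of entropy} (tensorization), which states that for a product measure one has
\begin{equation*}
    \ent(\exp(\lambda f(X))) \le \sum_{i=1}^n \mathbb{E}\big[ \ent^{(i)}(\exp(\lambda f(X))) \big],
\end{equation*}
where $\ent^{(i)}$ denotes the entropy taken only with respect to the $i$th coordinate $X_i$, conditioning on all the others. Because each $X_i$ is a single symmetric $\pm 1$ variable, $\ent^{(i)}$ is the entropy of a two-valued random variable, namely the two values $\exp(\lambda f(X))$ and $\exp(\lambda f(\bar{X}^{(i)}))$, each occurring with conditional probability $\tfrac12$. The problem thus reduces entirely to bounding the entropy of a random variable taking two equally likely values.

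Next I would establish the scalar two-point estimate. For two positive numbers $a,b$, writing $\Phi(a,b)$ for the entropy of the variable equal to $a$ or $b$ with equal probability, one needs the elementary inequalities
\begin{equation*}
    \Phi(a,b) \le \tfrac{1}{8}(\log a - \log b)^2\,(a+b)
    \qquad\text{and}\qquad
    \Phi(a,b) \le \tfrac14 (a-b)(\log a - \log b).
\end{equation*}
Setting $a = \exp(\lambda f(X))$ and $b = \exp(\lambda f(\bar{X}^{(i)}))$, the first estimate yields a factor $\tfrac{\lambda^2}{8}(f(X)-f(\bar{X}^{(i)}))^2$ times $\tfrac12(a+b)$; averaging over $X_i$ replaces $\tfrac12(a+b)$ by $\mathbb{E}^{(i)}[\exp(\lambda f(X))]$, producing exactly the symmetric bound~\eqref{eq:grossnormal} after summing over $i$ and taking the outer expectation. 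For~\eqref{eq:grossplus} I would use the second scalar inequality together with the observation that $(a-b)(\log a - \log b) = \lambda(a-b)(f(X)-f(\bar{X}^{(i)}))$, and then bound $(a-b)(f(X)-f(\bar{X}^{(i)}))$ by $\exp(\lambda f(X))\,(f(X)-f(\bar{X}^{(i)}))_+^2$ up to the symmetrization inherent in summing the contribution of $X_i = +1$ and $X_i = -1$; the positive-part truncation arises precisely because only the coordinate value giving the larger of $f(X), f(\bar X^{(i)})$ contributes after this symmetrization.

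The main obstacle I anticipate is the careful bookkeeping in the second part, namely getting the positive-part truncation $(\cdot)_+^2$ in~\eqref{eq:grossplus} with the correct constant $\tfrac14$. The symmetric inequality~\eqref{eq:grossnormal} is a direct consequence of the classical two-point log-Sobolev inequality and is essentially routine once tensorization is in place; the subtlety lies in verifying the scalar inequality $\Phi(a,b) \le \tfrac14(a-b)(\log a - \log b)$ and then exploiting the exchange symmetry $X_i \leftrightarrow -X_i$ to convert the symmetric expression $(a-b)(\log a - \log b)$ into the one-sided form involving $\exp(\lambda f(X))(f(X)-f(\bar X^{(i)}))_+^2$. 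I would handle this by writing out the two-term conditional expectation explicitly, pairing the $X_i=+1$ and $X_i=-1$ events, and checking the resulting elementary inequality by reducing to a single-variable calculus estimate.
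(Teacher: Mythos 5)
Your route to the symmetric bound~\eqref{eq:grossnormal} is sound and is, in substance, the same as the paper's: the paper cites from~\cite[p.~122]{Boucheron2013} the symmetrized inequality $\ent(\exp(\lambda f(X))) \le \frac12 \mathbb{E}\big[\sum_{i=1}^n \big(\exp(\lambda f(X)/2)-\exp(\lambda f(\bar X^{(i)})/2)\big)^2\big]$ --- which is exactly your tensorization step combined with the two-point log-Sobolev inequality $\Phi(a,b)\le\frac12(\sqrt a-\sqrt b)^2$, where $\Phi(a,b)$ denotes the entropy of a variable taking values $a,b$ with probability $\tfrac12$ --- and then applies the Hermite--Hadamard-type estimate $(\exp(a/2)-\exp(b/2))^2\le\frac18(a-b)^2(\exp(a)+\exp(b))$; your ``fused'' scalar inequality is precisely the composition of these two facts. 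One bookkeeping slip: as literally stated, $\Phi(a,b)\le\frac18(\log a-\log b)^2(a+b)$ would only yield the constant $\frac{\lambda^2}{4}$ in~\eqref{eq:grossnormal}; what you actually use afterwards (multiplying by $\tfrac12(a+b)$), and what is true, is $\Phi(a,b)\le\frac1{16}(\log a-\log b)^2(a+b)$.

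The genuine gap is in your derivation of~\eqref{eq:grossplus}. The scalar inequality $\Phi(a,b)\le\frac14(a-b)(\log a-\log b)$ is correct (it is the covariance representation of entropy plus Jensen), but it is too weak to produce the one-sided bound with constant $\frac{\lambda^2}{4}$. After tensorization, write $a=\exp(\lambda f^+)\ge b=\exp(\lambda f^-)$ and $s=\log(a/b)=\lambda(f^+-f^-)\ge 0$ for a fixed coordinate $i$ and configuration of the other coordinates; since only the larger branch survives the positive part, the corresponding contribution to the right-hand side of~\eqref{eq:grossplus} is $\frac{\lambda^2}{4}\cdot\frac12\,a\,(f^+-f^-)^2=\frac18 a s^2$. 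Your chain therefore needs $\frac14(a-b)s\le\frac18 a s^2$, i.e.\ $2(1-e^{-s})\le s$, which fails for all $0<s\lesssim 1.59$; in the small-$s$ limit the left side is $\approx 2s$, twice the right side. This cannot be repaired by the ``symmetrization over $X_i=\pm1$'' you invoke, nor by averaging over the remaining coordinates: for $f(x)=c\sum_i x_i$ with $\lambda c$ small, every coordinate and every configuration sits in this bad regime, so the sum of your intermediate bounds exceeds the right-hand side of~\eqref{eq:grossplus} by roughly a factor of $2$, even though~\eqref{eq:grossplus} itself holds (indeed, in that regime $\Phi\approx\frac18 a s^2$ is essentially sharp, which is exactly why a factor-$2$-lossy intermediate bound is fatal). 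The fix is to drop the covariance bound and use the same two facts as in the first part: $\Phi(a,b)\le\frac12(\sqrt a-\sqrt b)^2\le\frac1{16}s^2(a+b)\le\frac18 s^2\max(a,b)$, which is precisely the per-coordinate inequality needed for~\eqref{eq:grossplus}. (Note also that the paper itself proves only~\eqref{eq:grossnormal}, citing~\eqref{eq:grossplus} as standard, so your attempt proves more --- but as written, this part does not go through.)
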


\begin{proof}
The inequality~\eqref{eq:grossplus} is a standard result  and can be found, e.g.,  in~\cite[page 122]{Boucheron2013}. The inequality~\eqref{eq:grossnormal} is a variation of the same argument; see also~\cite[Exercise 5.5]{Boucheron2013} for a related (but not identical) result. The inequality~\eqref{eq:grossnormal} can, in fact, be found in a Master's thesis~\cite[Theorem 5]{Adamczak2003}. For convenience of the reader, we provide a proof of~\eqref{eq:grossnormal} based on the textbook~\cite{Boucheron2013}.

In~\cite[page 122]{Boucheron2013} it is proven that
\begin{equation}\label{eq:bouch1}
    \ent(\exp(\lambda f(X))) \le \frac{1}{2} \mathbb{E} \left [ \sum_{i=1}^n \left ( \exp(\lambda f(X)/2) - \exp(\lambda f(\bar{X}^{(i)})/2) \right )^2 \right ].
\end{equation}
For $a\ge b$ we have
\begin{align*}
    \exp\left (\frac{a}{2}\right) - \exp\left( \frac{b}{2} \right ) & = \int_{b/2}^{a/2} \exp(t)\mathrm{d}t \le \frac{a-b}{2} \cdot \frac{\exp\left (\frac{a}{2}\right) + \exp\left( \frac{b}{2} \right )}{2}\\
    & \le \frac{a-b}{2} \sqrt{\frac{\exp(a)+\exp(b)}{2}},
\end{align*}
where the first inequality follows from the concavity of the exponential and the Hermite-Hadamard inequality. Therefore, for all $a, b\in \R$ we have
\begin{equation}\label{eq:ineqab}
(\exp(a/2) - \exp(b/2))^2 \le \frac{1}{8} (a-b)^2(\exp(a) + \exp(b)).
\end{equation}
Applying~\eqref{eq:ineqab} to each summand in Equation~\eqref{eq:bouch1} one obtains
\begin{align*}
    \ent(\exp(&\lambda f(X)))  \le \frac{\lambda^2}{16} \sum_{i=1}^n \mathbb{E} \left [ (f(X) - f(\bar{X}^{(i)}))^2  \left ( \exp(\lambda f(X)) + \exp(\lambda f(\bar{X}^{(i)})) \right )\right ]\\ 
    & =  \frac{\lambda^2}{16} \sum_{i=1}^n \mathbb{E} \left [ (f(X) - f(\bar{X}^{(i)}))^2  \exp(\lambda f(X))\right ] \\ & + \frac{\lambda^2}{16} \sum_{i=1}^n \mathbb{E} \left [ (f(X) - f(\bar{X}^{(i)}))^2  \exp(\lambda f(\bar{X}^{(i)}))\right ]\\
    & = \frac{\lambda^2}{8} \mathbb{E} \left [ \exp(\lambda f(X)) \sum_{i=1}^n \left ( f(X) - f(\bar{X}^{(i)}) \right )^2 \right ],
\end{align*}
where the last equality follows from the fact that $f(X)$ and $f(\bar{X}^{(i)})$ have the same distribution and changing the sign of the $i$th entry of $\bar{X}^{(i)}$ gives $X$ again.
\end{proof}

\subsection{Bounds on the complex logarithm}

The following two elementary results on the complex logarithm are needed in the convergence proofs of the Lanczos method in Section~\ref{sec:Lanczos}.
\begin{lemma}\label{lemma:circle}
Consider a circle in the complex plane with center $a \in \R^+$, $a>1$ and radius $b$ such that $b^2 = a^2 - 1$. Then the maximum absolute value of the logarithm on this circle is attained on the real axis.
\end{lemma}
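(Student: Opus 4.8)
The plan is to eliminate the constraint $b^2 = a^2 - 1$ by the hyperbolic substitution $a = \cosh t$, $b = \sinh t$ with $t := \log(a+b) > 0$, and then to pass to the variable $s = \log z$. First I would note that the circle lies in the open right half-plane, since $\operatorname{Re}(z) \ge a - b = e^{-t} > 0$, so the principal logarithm is holomorphic there and $s = u + iv$ has $v = \operatorname{Arg}(z) \in (-\tfrac{\pi}{2}, \tfrac{\pi}{2})$. Writing $z = x + iy$, the circle equation $(x-a)^2 + y^2 = b^2$ rearranges, using $a^2 - b^2 = 1$, into $|z|^2 + 1 = 2a\operatorname{Re}(z)$; substituting $z = e^{u+iv}$ and dividing by $2e^u$ turns this into the clean equation $\cosh u = \cosh t \cos v$ for the image of the circle under $\log$. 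The two real points $z = a \pm b = e^{\pm t}$ map to $s = \pm t$, so the assertion that the maximum of $|\log z|$ is attained on the real axis becomes: on the curve $\cosh u = \cosh t\cos v$ one has $u^2 + v^2 \le t^2$, with equality exactly at $(u,v) = (\pm t, 0)$.

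Next I would use the symmetries $u \mapsto -u$ (which is $z \mapsto 1/z$) and $v \mapsto -v$ (which is $z \mapsto \bar z$) of the curve to reduce to its upper branch $u = U(v) := \operatorname{arccosh}(\cosh t\cos v) \ge 0$, $v \in [0, v_{\max}]$, where $v_{\max} = \arccos(\operatorname{sech} t) < \tfrac{\pi}{2}$ is the value at $u = 0$. Setting $S(v) := U(v)^2 + v^2$, the goal is $S(v) \le S(0) = t^2$, which I would obtain by showing that $S$ is nonincreasing. Differentiating the curve relation gives $\sinh U \cdot U' = -\cosh t \sin v$, and after inserting $\cosh t = \cosh U / \cos v$ the inequality $S'(v) = 2(v + U U') \le 0$ simplifies to
\[
\frac{\tanh U}{U} \le \frac{\tan v}{v}.
\]
The final step is to observe that this holds for every $U > 0$ and $v \in (0, \tfrac{\pi}{2})$ simply because $\tanh U < U$ forces $\tfrac{\tanh U}{U} < 1$ while $\tan v > v$ forces $\tfrac{\tan v}{v} > 1$. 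Hence $S$ is strictly decreasing on $(0, v_{\max}]$, so $S(v) < t^2$ there, and the maximal value $t = \log(a+b)$ is attained only at the real points $z = a \pm b$.

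The main obstacle, and really the only non-routine ingredient, is finding the right change of variables: the hyperbolic parametrization combined with the passage to $s = \log z$ is what collapses the two competing contributions $(\log|z|)^2$ and $(\operatorname{Arg} z)^2$ into the single tractable curve $\cosh u = \cosh t\cos v$ and reduces the entire statement to the elementary sandwich $\tfrac{\tanh U}{U} < 1 < \tfrac{\tan v}{v}$. Once the reduction to monotonicity of $S$ is in place, the rest is direct differentiation together with the standard inequalities $\tanh x < x < \tan x$; the only points requiring a line of care are verifying $v_{\max} < \tfrac{\pi}{2}$, so that $\tan v > v$ applies, and $\cos v > 0$ along the curve, so that the substitution $\cosh t = \cosh U / \cos v$ is legitimate.
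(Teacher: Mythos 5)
Your proof is correct, and its skeleton actually coincides with the paper's own argument: after your change of variables $s=\log z$, the upper branch $v\mapsto U(v)+\mathrm{i}v$ with $U(v)=\operatorname{arccosh}(\cosh t\cos v)$ is exactly the paper's parametrization of the upper semicircle by the argument $\theta$, since the paper's $f(\theta)=\arcsinh\bigl(\sqrt{a^2\cos^2\theta-1}\bigr)+\mathrm{i}\theta$ equals $U(\theta)+\mathrm{i}\theta$; your $S(v)$ is the paper's $g(\theta)$, your $u\mapsto-u$ symmetry is the paper's observation $r_-=1/r_+$, and differentiating leads in both cases to the same inequality to verify, namely $v\sinh U\le a\,U\sin v$. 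Where the two arguments genuinely part ways is in how this inequality is settled. The paper bounds $\theta/(a\sin\theta)$ through the endpoint value $\arctan(b)$ and invokes three monotonicity facts ($x/\sin x$ increasing, $\arctan x<\arcsinh x$, and $\arcsinh(x)/x$ decreasing); you instead substitute $\cosh t=\cosh U/\cos v$ to decouple the two variables, reducing everything to the sandwich $\tanh U/U<1<\tan v/v$. This is a real simplification: it is more elementary, avoids the endpoint comparison and the inequality $\arctan x < \arcsinh x$ altogether, and makes the strictness (hence uniqueness of the maximizers) transparent. Two small remarks on your writeup: the map $u\mapsto-u$ with $v$ fixed is $z\mapsto 1/\bar z$ rather than $z\mapsto 1/z$, though this is immaterial since all you use is that the equation $\cosh u=\cosh t\cos v$ is even in $u$ and in $v$; and at $v=v_{\max}$, where $U=0$, the derivative $U'$ blows up, but $U U'=-\cosh t\sin v\cdot(U/\sinh U)$ stays bounded, so $S$ is continuous on $[0,v_{\max}]$ and strictly decreasing there, which is all your argument needs.
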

\begin{proof}
By symmetry, we can restrict ourselves to the upper half of the circle. For fixed $\theta \in [0, \arctan(b)]$ the line $r e^{  \mathrm{i} \theta}$ for $r> 0$ intersects the circumference when $(r \cos\theta - a)^2 + r^2 \sin^2\theta = b^2$. Clearly, this equality holds for $r_{\pm} = a \cos\theta \pm \sqrt{a^2\cos^2\theta - 1}$. Note that these points parametrize the entire upper semi-circle and we have $r_- = \frac{1}{r_+}$, so 
\begin{equation*}
    |\log(r_- e^{ \mathrm{i} \theta})| = |\log(r_-) +  \mathrm{i} \theta| = |-\log(r_+) +  \mathrm{i} \theta| = |\log(r_+) +  \mathrm{i} \theta| = |\log(r_+ e^{  \mathrm{i} \theta})|.
\end{equation*}
Therefore, to prove the lemma it is sufficient to show that the function $g: [0, \arctan(b)] \to \R$ given by $g(\theta) := |f(\theta)|^2$, where $f(\theta) = \log(r_+ e^{ \mathrm{i} \theta})$, attains its maximum for $\theta = 0$. We will establish this fact by showing that $g$ decreases monotonically. We have
\begin{align*}
    f(\theta) & = \log\left ( \big( a \cos\theta + \sqrt{a^2\cos^2\theta-1}\big) e^{ \mathrm{i} \theta}\right ) = \arcsinh\big( \sqrt{a^2\cos^2\theta-1}\big) + \mathrm{i} \theta; \\
    f'(\theta) & = \frac{-a^2\cos\theta\sin\theta}{a\cos\theta \sqrt{a^2\cos^2\theta - 1}} +  \mathrm{i} = - \frac{a\sin\theta}{\sqrt{a^2\cos^2\theta - 1}} +  \mathrm{i},
\end{align*}
and therefore
\begin{equation*}
    g'(\theta) = 2 \mathrm{Re} \big(f'(t) \cdot \overline{f(t)} \big) = -2\frac{a\sin\theta\cdot \arcsinh\big( \sqrt{a^2\cos^2\theta - 1} \big)}{\sqrt{a^2\cos^2\theta - 1}} + 2\theta.
\end{equation*}
For $\theta \in (0, \arctan(b))$ we have that
\begin{equation*}
    g'(\theta) \le 0 \Leftrightarrow \frac{\theta}{a\sin\theta} \le \frac{\arcsinh\left ( \sqrt{a^2\cos^2\theta - 1}\right )}{\sqrt{a^2\cos^2\theta - 1}}.
\end{equation*}
Using the facts that $x\mapsto \frac{x}{\sin(x)}$ is increasing for $x \in [0, \pi]$, $\arctan(x) < \arcsinh(x)$ for $x > 0$, and $x\mapsto \frac{\arcsinh(x)}{x}$ is decreasing for $x>0$, one obtains
\begin{equation*}
    \frac{\theta}{a\sin\theta} \le \frac{\arctan(b)}{b} < \frac{\arcsinh(b)}{b} \le \frac{\arcsinh\left ( \sqrt{a^2\cos^2\theta - 1}\right )}{\sqrt{a^2\cos^2\theta - 1}},
\end{equation*}
for every $\theta \in (0, \arctan(b))$. In particular, this shows 
 $g'(\theta) < 0$ for  $\theta \in (0, \arctan(b))$ and hence $g$ is decreasing.
\end{proof}

\begin{corollary}\label{cor:logellipse}
Consider an ellipse $\mathcal{E}$ in the open right-half complex plane, with foci on the real axis. Then the maximum absolute value of the logarithm on this ellipse is attained on the real axis. 
\end{corollary}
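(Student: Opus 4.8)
The plan is to reduce the statement about an ellipse to the preceding result about a circle, Lemma~\ref{lemma:circle}, by exploiting the conformal properties of the complex logarithm together with a normalization of the ellipse. First I would recall the geometric setup: an ellipse $\mathcal{E}$ in the open right-half plane with foci $c_1, c_2$ on the positive real axis. Since the logarithm turns multiplication into addition, the natural reduction is to rescale the ellipse so that its foci are placed symmetrically. Concretely, I would apply the scaling $z \mapsto z/\sqrt{c_1 c_2}$, which maps $\mathcal{E}$ to an ellipse $\mathcal{E}'$ whose foci are $\pm' \sqrt{c_1/c_2}$ and $\sqrt{c_2/c_1}$ — still on the positive real axis — and which shifts the logarithm by the additive constant $\tfrac{1}{2}\log(c_1 c_2)$. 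The key observation is that maximizing $|\log z|$ is \emph{not} invariant under such a shift, so I must be careful: the reduction to the circle case should instead come from recognizing the Bernstein-ellipse structure directly.

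A cleaner route, which I would pursue, is to parametrize the family of confocal ellipses and use the maximum principle in reverse. The boundary of a Bernstein-type ellipse with foci on the real axis is the image, under the Joukowski map $w \mapsto \tfrac12(w + w^{-1})$ (suitably scaled and shifted so the foci land on the real segment), of a circle $|w| = \rho$. I would write points on $\mathcal{E}$ as $z = \tfrac{c_2 - c_1}{4}(w + w^{-1}) + \tfrac{c_1 + c_2}{2}$ with $|w| = \rho > 1$, so that the real-axis intersection points correspond to $w = \pm \rho$. The goal then becomes showing that $|\log z|$, as a function of the argument of $w$, is maximized when $w$ is real and positive (the point closest to the origin on the real axis). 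This is precisely the monotonicity phenomenon already established in Lemma~\ref{lemma:circle}, which handled a circle; the ellipse is a smooth perturbation of that situation as the aspect ratio varies.

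The most direct argument, and the one I expect to present, is to invoke Lemma~\ref{lemma:circle} by a limiting or containment argument. I would note that any ellipse $\mathcal{E}$ in the open right-half plane with foci on the real axis is contained in a disk $D$ centered at a real point $a > 1$ with $\mathcal{E}$ and $\partial D$ sharing the two real-axis extreme points, \emph{provided} the radius is chosen to match the circle condition $b^2 = a^2 - 1$ from Lemma~\ref{lemma:circle}. However, containment alone does not transfer the location of the maximum, so instead I would argue directly: reduce to the normalized ellipse with foci $\pm 1$ via the affine map $\varphi$ from the main text (which maps $[\lambda_{\min},\lambda_{\max}]$ to $[-1,1]$), observe that on this normalized ellipse the two real-axis intersection points are reciprocals $\alpha$ and $1/\alpha$ (exactly as in the proof of Corollary~\ref{cor:numLanczosSteps}), and then replicate the monotonicity computation of Lemma~\ref{lemma:circle} with the circle replaced by the ellipse parametrization. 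The function $g(\theta) = |\log z(\theta)|^2$ on the upper half of $\mathcal{E}$ should again be shown to decrease monotonically in the angular parameter, so the maximum occurs on the real axis.

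The main obstacle I anticipate is the monotonicity computation itself: unlike the circle, where the real part $\operatorname{arcsinh}(\sqrt{a^2\cos^2\theta - 1})$ had a clean closed form, the ellipse parametrization yields a more involved expression for $\operatorname{Re}\log z(\theta)$ and $\operatorname{Im}\log z(\theta)$, and verifying $g'(\theta) \le 0$ will require careful estimates analogous to the $\arctan$-versus-$\operatorname{arcsinh}$ and $x/\sin x$ monotonicity facts used in Lemma~\ref{lemma:circle}. I would therefore try hardest to find a transformation reducing $\mathcal{E}$ \emph{exactly} to the circle of Lemma~\ref{lemma:circle} — for instance, by noting that confocal ellipses and the critical circle $b^2 = a^2 - 1$ both arise as level sets under the same Joukowski/logarithm composition — so that the corollary follows as an immediate specialization rather than requiring the estimates to be redone from scratch.
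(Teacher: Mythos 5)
There is a genuine gap: none of the three routes you sketch is completed, and the route that actually works---containment in a circle---is precisely the one you dismiss. The paper's proof \emph{is} the containment argument, made to work by a choice of circle you missed. Let $0 < \alpha < \beta$ be the intersections of $\mathcal{E}$ with the real axis. If $|\log\alpha| \ge |\log\beta|$ (which forces $\alpha < 1$ and $\beta \le 1/\alpha$), take the circle $\mathcal{C}_1$ with center $a = \tfrac{1}{2}(\alpha + 1/\alpha)$ and radius $b = \tfrac{1}{2}(1/\alpha - \alpha) = \sqrt{a^2-1}$: it passes through $\alpha$ and $1/\alpha$ and is tangent to $\mathcal{E}$ at $\alpha$. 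Since the foci of $\mathcal{E}$ lie on the real axis, $\mathcal{E}$ is contained in the disk whose diameter is its major axis $[\alpha,\beta]$, which in turn lies in the disk bounded by $\mathcal{C}_1$ because $\beta \le 1/\alpha$. By Lemma~\ref{lemma:circle}, the maximum of $|\log z|$ on $\mathcal{C}_1$ is attained on the real axis, i.e.\ at $\alpha$ or $1/\alpha$; these are reciprocals, so both give the value $|\log\alpha|$. Since $\log z$ is analytic on the disk (which lies in the open right half-plane), the maximum modulus principle gives $|\log z| \le |\log\alpha|$ for every $z$ in the closed disk, hence for every $z \in \mathcal{E}$, with equality at $z = \alpha \in \mathcal{E}$. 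Thus containment \emph{does} transfer the location of the maximum, exactly because the comparison circle is tangent to the ellipse at the point that maximizes $|\log|$ over the whole circle. The case $|\log\beta| > |\log\alpha|$ is symmetric, using the circle through $\beta$ and $1/\beta$.

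Two further points. First, your description of the candidate disk is incorrect: it does not share \emph{both} real-axis extreme points with $\mathcal{E}$ (demanding that together with $b^2 = a^2-1$ would force $\beta = 1/\alpha$); it shares only the tangency point, the other real intersection of the circle being its reciprocal, which in general lies outside $\mathcal{E}$. Second, your fallback plan---redoing the monotonicity computation of Lemma~\ref{lemma:circle} on a Joukowski parametrization of $\mathcal{E}$---is never carried out, and you yourself flag it as the main obstacle; likewise, the hoped-for exact conformal reduction of a general such ellipse to the critical circle is not supplied (and cannot proceed by rescaling, since $|\log|$ is not invariant under the dilations that would be required, as you correctly observed at the outset). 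As written, the proposal therefore identifies the relevant ingredients but does not constitute a proof.
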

\begin{proof}
Let $0 < \alpha < \beta$ be the two intersections of the ellipse with the real axis. If $|\log \alpha| \ge |\log \beta|$ then $\mathcal{E}$ is contained in the circle $\mathcal{C}_1$ of center $a := \frac{1}{2} \left (  \frac{1}{\alpha} + \alpha \right )$ and radius $b := \frac{1}{2} \left (  \frac{1}{\alpha} - \alpha \right )= \sqrt{a^2 - 1}$, and $\mathcal{E}$ is tangent to $\mathcal{C}_1$ in $\alpha$; otherwise $\mathcal{E}$ is contained in the circle $\mathcal{C}_2$  of center $a := \frac{1}{2} \left ( \beta + \frac{1}{\beta} \right )$ and radius $b := \frac{1}{2} \left ( \beta - \frac{1}{\beta} \right ) = \sqrt{a^2-1}$, and $\mathcal{E}$ is tangent to $\mathcal{C}_2$ in $\beta$. 
In both cases, the result follows from Lemma~\ref{lemma:circle}.
\end{proof}
	
\end{document}